\documentclass{amsart}

\setlength{\textwidth}{6.0 in}

\setlength{\evensidemargin}{0.0 in}

\setlength{\oddsidemargin}{0.0 in}

\usepackage[all]{xy}
\xyoption{poly}
\xyoption{arc}
\xyoption{curve}

\CompileMatrices

\begin{document}

\renewcommand{\theequation}{\thesection.\arabic{equation}}
\newcommand{\nc}{\newcommand}

\nc{\pr}{\noindent{\em Proof. }} \nc{\g}{\mathfrak g}
\nc{\n}{\mathfrak n} \nc{\opn}{\overline{\n}}\nc{\h}{\mathfrak h}
\renewcommand{\b}{\mathfrak b}
\nc{\Ug}{U(\g)} \nc{\Uh}{U(\h)} \nc{\Un}{U(\n)}
\nc{\Uopn}{U(\opn)}\nc{\Ub}{U(\b)} \nc{\p}{\mathfrak p}
\renewcommand{\l}{\mathfrak l}
\nc{\z}{\mathfrak z} \renewcommand{\h}{\mathfrak h}
\nc{\m}{\mathfrak m}
\renewcommand{\k}{\mathfrak k}
\nc{\opk}{\overline{\k}}
\nc{\opb}{\overline{\b}}
\nc{\e}{{\epsilon}}
\nc{\ke}{{\bf k}_\e}
\nc{\Hk}{{\rm Hk}^{\gr}(A,A_0,\e )}
\nc{\gr}{\bullet}
\nc{\ra}{\rightarrow}
\nc{\Alm}{A-{\rm mod}}
\nc{\DAl}{{D}^-(A)}
\nc{\HA}{{\rm Hom}_A}

\newtheorem{theorem}{Theorem}{}
\newtheorem{lemma}[theorem]{Lemma}{}
\newtheorem{corollary}[theorem]{Corollary}{}
\newtheorem{conjecture}[theorem]{Conjecture}{}
\newtheorem{proposition}[theorem]{Proposition}{}
\newtheorem{axiom}{Axiom}{}
\newtheorem{remark}{Remark}{}
\newtheorem{example}{Example}{}
\newtheorem{exercise}{Exercise}{}
\newtheorem{definition}{Definition}{}

\renewcommand{\thetheorem}{\thesection.\arabic{theorem}}

\renewcommand{\thelemma}{\thesection.\arabic{lemma}}

\renewcommand{\theproposition}{\thesection.\arabic{proposition}}

\renewcommand{\thecorollary}{\thesection.\arabic{corollary}}

\renewcommand{\theremark}{\thesection.\arabic{remark}}

\renewcommand{\thedefinition}{\thesection.\arabic{definition}}

\title{Localization of quantum biequivariant $\mathcal{D}$--modules and q-W algebras}

\author{A. Sevostyanov}

\address{Institute of Pure and Applied Mathematics,
University of Aberdeen \\ Aberdeen AB24 3UE, United Kingdom \\ e-mail: a.sevastyanov@abdn.ac.uk }

\begin{abstract}
We present a biequivariant version of Kremnizer--Tanisaki localization theorem for quantum $\mathcal{D}$--modules. We also obtain an equivalence between a category of finitely generated equivariant modules over a quantum group and a category of finitely generated modules over a q-W algebra which can be regarded as an equivariant quantum group version of Skryabin equivalence. The biequivariant localization theorem for quantum $\mathcal{D}$--modules together with the equivariant quantum group version of Skryabin equivalence yield an equivalence between a certain category of quantum biequivariant $\mathcal{D}$--modules and a category of finitely generated modules over a q-W algebra.
\end{abstract}

\keywords{$\mathcal{D}$--module, W--algebra, quantum group}

\maketitle

\section{Introduction}

\setcounter{equation}{0}

Let $G$ be a complex simple connected simply connected algebraic group with Lie algebra $\g$, $B$ a Borel subgroup of $G$, $\b$ the Lie algebra of $B$. Denote by $U(\g)$ the universal enveloping algebra of $\g$. Let $\lambda$ be a weight of $\g$, $M_\lambda$ the Verma module over $\g$ with highest weight $\lambda$ with respect to the system of positive roots of the pair $(\g,\b)$. Denote by $I_\lambda$ the annihilator of $M_\lambda$ in $U(\g)$, and let $U(\g)^\lambda=U(\g)/I_\lambda$. note that $I_\lambda$ is generated by a maximal ideal of the center $Z(U(\g))$ of $U(\g)$ which is the kernel of a character $\chi_\lambda:Z(U(\g))\rightarrow \mathbb{C}$. By the celebrated Beilinson--Bernstein theorem, if $\lambda$ is regular dominant then the category of $U(\g)^\lambda$--modules is equivalent to the category of modules over the sheaf $\mathcal{D}^\lambda_{G/B}$ of $\lambda$--twisted differential operators on the flag variety $G/B$ which are quasi--coherent over the sheaf of regular functions $\mathbb{C}[G/B]$ on $G/B$. The functor providing the equivalence is simply the global section functor.

This result was generalized to the case of quantum groups in \cite{Kr, Tan}. The main observation used in \cite{Kr} is that $\mathcal{D}_\lambda$ can be regarded as a quantization of the $\lambda$--twisted cotangent bundle $T^*(G/B)_\lambda$ which a symplectic leaf in the quotient $(T^*G)/B$ of the symplectic variety $T^*G$, equipped with the canonical symplectic structure of the cotangent bundle, by the Hamiltonian action induced by the $B$--action by right translations on $G$. Note that $\lambda$ naturally gives rise to a character $\lambda:\b \rightarrow \mathbb{C}$, and $T^*(G/B)_\lambda$ corresponds to the value $\lambda\in \b^*$ of the moment map $\mu:T^*G\rightarrow \b^*$ for the $B$--action, $T^*(G/B)_\lambda=\mu^{-1}(\lambda)/B$. Using this observation at the quantum level one can replace the category of  $\mathcal{D}^\lambda_{G/B}$--modules with a category $\mathcal{D}_{B}^\lambda$ of modules over the sheaf of differential operators $\mathcal{D}_G$ on $G$ which are equivariant with respect to a left $B$--action. Objects of this category are $\mathcal{D}_G$--modules $M$ equipped with the structure of $B$--modules in such a way that the action map $\mathcal{D}_G\otimes M \rightarrow M$ is a morphism of $B$--modules, where the action of $B$ on $\mathcal{D}_G$ is induced by the action on $G$ by right translations, and the differential of the action of $B$ on $M$ coincides with the action of the Lie algebra $\b$ on the tensor product $M\otimes \mathbb{C}_\lambda$, where $\b$ acts on $M$ via the natural embedding $\b \rightarrow \mathcal{D}_G$, and $\mathbb{C}_\lambda$ is the one--dimensional representation of $\b$ corresponding to the character $\lambda$. The Beilinson--Bernstein localization theorem for equivariant $\mathcal{D}_G$--modules was already formulated in \cite{BBer} (see also \cite{Kash} for some further details).

Note that $T^*G$ is naturally equipped with a $G$--action induced by the $G$--action by left translations on $G$. This action also preserves the canonical symplectic structure on $T^*G$ and commutes with the right $B$--action. Hence it induces a Hamiltonian $G$--action on $(T^*G)/B$ and on all its symplectic leaves. In particular, the natural $G$--action on  $T^*(G/B)_\lambda$ is Hamiltonian. One can restrict this action to various subgroups of $G$. Let $N$ be such a subgroup with Lie algebra $\n$ equipped with a character $\chi:\n\rightarrow \mathbb{C}$. Similarly to the case of $B$--equaivariant $\mathcal{D}_G$--modules one can consider the category $_{N}^{\chi}\mathcal{D}^\lambda_{G/B}$ of $N$--equivariant $\mathcal{D}^\lambda_{G/B}$--modules. By Beilinson--Bernstein localization theorem this category is equivalent to the category $_{N}^{~~\chi} U(\g)^\lambda-{\rm mod}$ of equivariant $(\g,N)$--modules on which the center $Z(U(\g))$ acts by the character $\chi_\lambda$. This category is defined similarly to the category $\mathcal{D}_{B}^\lambda$. Its objects are left $\g$--modules $V$ equipped with the structure of left $N$--modules in such a way that the action map
$\g \otimes V \rightarrow V$ is a morphism of $N$--modules, where the action of $N$ on $\g$ is induced by the adjoint representation, and the differential of the action of $N$ on $V$ coincides with the action of the Lie algebra $\n$ on the tensor product $V\otimes \mathbb{C}_\chi$, where $\n$ acts on $V$ via the natural embedding $\b \rightarrow \g$, and $\mathbb{C}_\chi$ is the one--dimensional representation of $\n$ corresponding to the character $\chi$.

Now let $\mu_1:T^*(G/B)_\lambda\rightarrow \n^*$ be the moment map corresponding to the Hamiltonian group action of $N$ on $T^*(G/B)_\lambda$, and $_{\chi}T^*(G/B)_\lambda=\mu_1^{-1}(\chi)/N$ the corresponding reduced Poisson manifold. Following the philosophy presented before in case of equivariant $\mathcal{D}_G$--modules one can expect that the category $_{N}^{~~\chi}\mathcal{D}^\lambda_{G/B}$ is equivalent to the category of $\mathcal{D}$--modules related to certain quantization of $_{\chi}T^*(G/B)_\lambda$, and the category $_{N}^{~~\chi} U(\g)^\lambda-{\rm mod}$ is equivalent to the category of modules over an associative algebra $^\chi U(\g)^\lambda$ which is a quantization of $_{\chi}T^*(G/B)_\lambda$.
Putting the two equivariance conditions together this would yield an equivalence between a category $_{N}^{~~\chi}\mathcal{D}_{B}^\lambda$ of $\mathcal{D}_G$--modules equipped with the two equivariance conditions with respect to actions of $B$ and $N$ and a category of $^\chi U(\g)^\lambda$--modules.

Such equivalence was established, for instance, in case of modules over $W$--algebras in \cite{Kr1} when the subgroup $N$ and its character $\chi$ are chosen in such a way that $^\chi U(\g)^\lambda$ is a quotient of a finitely generated W--algebra over a central ideal. In this paper we are going to obtain a similar categorial equivalence in case of q-W--algebras introduced in \cite{S10}.

The definition of q-W--algebras is given in terms of quantum groups and we shall need an analogue of Beilinson--Bernstein localization for quantum groups.
First of all there is a natural analogue of the algebra of differential operators on $G$ for quantum groups called the Heisenberg double $\mathcal{D}_q$ (see \cite{dual}). $\mathcal{D}_q$ is a smash product of a quantum group $U_q(\g)$ and of the dual Hopf algebra generated by matrix elements of finite--dimensional representations of the quantum group. Similarly to the case of Lie algebras one can consider the category of $\mathcal{D}_q$--modules which are equivariant, in a sense similar to the Lie algebra case, with respect to a locally finite action of a quantum group analogue $U_q(\b_+)$ of the universal enveloping algebra of a Borel sublagebra, $U_q(\b_+)$ being equipped with a character $\lambda$ as well. The main statement of \cite{Kr} is that if $\lambda$ is regular dominant the category of such $\mathcal{D}_q$--modules is equivalent to the category of $U_q(\g)^\lambda$--modules, where $U_q(\g)^\lambda=U_q(\g)^{fin}/J_\lambda$, and $U_q(\g)^{fin}$ is the locally finite part with respect to the adjoint action of the Hopf algebra $U_q(\g)$ on itself, $J_\lambda$ is the annihilator of the Verma module with highest weight $\lambda$ in $U_q(\g)^{fin}$.

In Section \ref{Dmod} we give a quantum group analogue of the localization theorem for the category $_{N}^{~~\chi}\mathcal{D}_{B}^\lambda$. Our construction is a straightforward generalization of the classical result. Such an easy generalization is possible because the Heisenberg double is equipped with natural analogues of the $G$--actions on the algebra of differential operators on $G$ induced by left and right translations on $G$.

This result can be applied in case of q-W--algebras if a quantum analogue of the group $N$ and of its character are chosen in a proper way. Appropriate subalgebras $U_q^s(\m_+)$ of $U_q(\g)$ with characters $\chi_q^s$ were defined in terms of certain new realizations $U_q^s(\g)$ of the quantum group $U_q(\g)$ associated to Weyl group elements $s$ of the Weyl group $W$ of $\g$. The definition of subalgebras $U_q^s(\m_+)$ requires a deep study of the algebraic structure of $U_q(\g)$ presented in \cite{S10}. We recall the main results of \cite{S10} in Sections \ref{wqreal}--\ref{wpsred}. However, the definition of the category of $U_q^s(\m_+)$--equivariant modules over $U_q^s(\g)$ requires some further investigation presented in Sections \ref{qplproups}--\ref{wpsred}. The problem is that a proper definition of this category formulated in Section \ref{skryabin} can only be given in terms of the locally finite part $U_q^s(\g)^{fin}$ of $U_q^s(\g)$, and the definition of the corresponding q-W--algebras associated to characters $\chi_q^s:U_q^s(\m_+)\rightarrow \mathbb{C}$ given in Section \ref{qplproups} in terms of $U_q^s(\g)^{fin}$ also becomes more complicated comparing to the one suggested in \cite{S10}. The use of the locally finite part $U_q^s(\g)^{fin}$ is related to the fact that $U_q^s(\g)^{fin}$ is a deformation of the algebra of regular functions on the algebraic group $G$ which follows from Proposition \ref{locfin}. Implicitly this result is also contained in \cite{Jos}.

The most difficult part of our construction is the proof of the equivalence between the category of finitely generated modules over $U_q^s(\g)^{fin}$ equivariant over $U_q^s(\m_+)$  and the category of finitely generated modules over the corresponding q-W--algebra $W_q^s(G)$ which can be regarded as an equivariant version of Skryabin equivalence for quantum groups (see Appendix to \cite{Pr}). We use the idea of the proof of a similar fact for W--algebras as it appears in \cite{GG}. However, technical difficulties in case of quantum groups become obscure. Our proof is presented in Section \ref{skryabin}. It heavily relies on the behavior of all ingredients of the construction in the classical limit $q\rightarrow 1$. In particular, the key step is to use the cross--section theorem for the action of a unipotent algebraic subgroup $N\subset G$ on a subvariety of $G$ obtained in \cite{S6}. Let $U(\m_+)$ be $q=1$ specialization of the $U_q^s(\m_+)$. The cross--section theorem implies in particular that as a $U(\m_+)$--module the $q=1$ specialization of any $U_q^s(\m_+)$--equivariant $U_q^s(\g)^{fin}$--module $V$ is isomorphic to the space of homomorphisms ${\rm hom}_{\mathbb{C}}(U(\m_+),V')$ of $U(\m_+)$ into a vector space $V'$ vanishing on some power of the natural augmentation ideal of $U(\m_+)$.

The quantum group analogue of the localization theorem for the category
$_{N}^{~~\chi} U(\g)^\lambda-{\rm mod}$ easily gives an equivalence between a category of modules over $\mathcal{D}_q$ equivariant with respect to a $U_q(\b_+)$--action and to a $U_q^s(\m_+)$--action and the category of $U_q^s(\m_+)$--equivariant modules over $U_q^s(\g)^{fin}$ with central character $\chi_\lambda$. This equivalence together with the equivariant Skryabin equivalence for quantum groups yield an equivalence between a category of finitely generated modules over $\mathcal{D}_q$ equivariant with respect to a $U_q(\b_+)$--action and to a $U_q^s(\m_+)$--action and the category of finitely generated modules over the quotient $W_q^s(G)_\lambda$ of the corresponding q-W--algebra $W_q^s(G)$ by a central ideal. This agrees with the general philosophy that $W_q^s(G)_\lambda$, or more generally $W_q^s(G)$, can be regarded as a quantization of the algebra of regular functions on a reduced Poisson manifold. In case of the algebra $W_q^s(G)$ the corresponding manifold is an algebraic group analogue of Slodowy slices associated to Weyl group element $s$ (see Theorem \ref{var}). Such slices transversal to conjugacy classes in $G$ were defined in \cite{S6}.

{\bf Acknowledgement}

The author is grateful to Y. Kremnizer for useful
discussions.


\setcounter{equation}{0}
\setcounter{theorem}{0}

\section{Notation}\label{notation}

Fix the notation used throughout the text.
Let $G$ be a
connected finite--dimensional complex simple Lie group, $
{\frak g}$ its Lie algebra. Fix a Cartan subalgebra ${\frak h}\subset {\frak
g}\ $and let $\Delta $ be the set of roots of $\left( {\frak g},{\frak h}
\right)$.  Let $\alpha_i,~i=1,\ldots, l,~~l=rank({\frak g})$ be a system of
simple roots, $\Delta_+=\{ \beta_1, \ldots ,\beta_N \}$
the set of positive roots.
Let $H_1,\ldots ,H_l$ be the set of simple root generators of $\frak h$.

Let $a_{ij}$ be the corresponding Cartan matrix,
and let $d_1,\ldots , d_l$ be coprime positive integers such that the matrix
$b_{ij}=d_ia_{ij}$ is symmetric. There exists a unique non--degenerate invariant
symmetric bilinear form $\left( ,\right) $ on ${\frak g}$ such that
$(H_i , H_j)=d_j^{-1}a_{ij}$. It induces an isomorphism of vector spaces
${\frak h}\simeq {\frak h}^*$ under which $\alpha_i \in {\frak h}^*$ corresponds
to $d_iH_i \in {\frak h}$. We denote by $\alpha^\vee$ the element of $\frak h$ that
corresponds to $\alpha \in {\frak h}^*$ under this isomorphism.
The induced bilinear form on ${\frak h}^*$ is given by
$(\alpha_i , \alpha_j)=b_{ij}$.

Let $W$ be the Weyl group of the root system $\Delta$. $W$ is the subgroup of $GL({\frak h})$
generated by the fundamental reflections $s_1,\ldots ,s_l$,
$$
s_i(h)=h-\alpha_i(h)H_i,~~h\in{\frak h}.
$$
The action of $W$ preserves the bilinear form $(,)$ on $\frak h$.
We denote a representative of $w\in W$ in $G$ by
the same letter. For $w\in W, g\in G$ we write $w(g)=wgw^{-1}$.
For any root $\alpha\in \Delta$ we also denote by $s_\alpha$ the corresponding reflection.

For every element $w\in W$ one can introduce the set $\Delta_w=\{\alpha \in \Delta_+: w(\alpha)\in -\Delta_+\}$, and the number of the elements in the set $\Delta_w$ is equal to the length $l(w)$ of the element $w$ with respect to the system $\Gamma$ of simple roots in $\Delta_+$.

Let ${{\frak b}_+}$ be the positive Borel subalgebra and ${\frak b}_-$
the opposite Borel subalgebra; let ${\frak n}_+=[{{\frak b}_+},{{\frak b}_+}]$ and $%
{\frak n}_-=[{\frak b}_-,{\frak b}_-]$ be their
nilradicals. Let $H=\exp {\frak h},N_+=\exp {{\frak n}_+},
N_-=\exp {\frak n}_-,B_+=HN_+,B_-=HN_-$ be
the Cartan subgroup, the maximal unipotent subgroups and the Borel subgroups
of $G$ which correspond to the Lie subalgebras ${\frak h},{{\frak n}_+},%
{\frak n}_-,{\frak b}_+$ and ${\frak b}_-,$ respectively.

We identify $\frak g$ and its dual by means of the canonical invariant bilinear form.
Then the coadjoint
action of $G$ on ${\frak g}^*$ is naturally identified with the adjoint one. We also identify
${{\frak n}_+}^*\cong {\frak n}_-,~{{\frak b}_+}^*\cong {\frak b}_-$.

Let ${\frak g}_\beta$ be the root subspace corresponding to a root $\beta \in \Delta$,
${\frak g}_\beta=\{ x\in {\frak g}| [h,x]=\beta(h)x \mbox{ for every }h\in {\frak h}\}$.
${\frak g}_\beta\subset {\frak g}$ is a one--dimensional subspace.
It is well known that for $\alpha\neq -\beta$ the root subspaces ${\frak g}_\alpha$ and ${\frak g}_\beta$ are orthogonal with respect
to the canonical invariant bilinear form. Moreover ${\frak g}_\alpha$ and ${\frak g}_{-\alpha}$
are non--degenerately paired by this form.

Root vectors $X_{\alpha}\in {\frak g}_\alpha$ satisfy the following relations:
$$
[X_\alpha,X_{-\alpha}]=(X_\alpha,X_{-\alpha})\alpha^\vee.
$$

Note also that in this paper we denote by $\mathbb{N}$ the set of nonnegative integer numbers, $\mathbb{N}=\{0,1,\ldots \}$.


\section{Quantum groups}

\setcounter{equation}{0}
\setcounter{theorem}{0}

In this paper we shall consider various specializations of the standard Drinfeld-Jimbo quantum group $U_h({\frak g})$ defined over the ring of formal power series ${\Bbb C}[[h]]$, where $h$ is an indeterminate.
We follow the notation of \cite{ChP}.

Let $V$ be a ${\Bbb C}[[h]]$--module equipped with the $h$--adic
topology. This topology is characterized by requiring that
$\{ h^nV ~|~n\geq 0\}$ is a base of the neighborhoods of $0$ in $V$, and that translations
in $V$ are continuous.

A topological Hopf algebra over ${\Bbb C}[[h]]$ is a complete ${\Bbb C}[[h]]$--module $A$
equipped with a structure of ${\Bbb C}[[h]]$--Hopf algebra (see \cite{ChP}, Definition 4.3.1),
the algebraic tensor products entering the axioms of the Hopf algebra are replaced by their
completions in the $h$--adic topology.
Let $\mu , \imath , \Delta , \varepsilon , S$ be the multiplication, the unit, the comultiplication,
the counit and the antipode of $A$, respectively.

The standard quantum group $U_h({\frak g})$ associated to a complex finite--dimensional simple Lie algebra
$\frak g$ is a topological Hopf algebra over ${\Bbb C}[[h]]$ topologically generated by elements
$H_i,~X_i^+,~X_i^-,~i=1,\ldots ,l$, subject to the following defining relations:
$$
\begin{array}{l}
[H_i,H_j]=0,~~ [H_i,X_j^\pm]=\pm a_{ij}X_j^\pm, ~~X_i^+X_j^- -X_j^-X_i^+ = \delta _{i,j}{K_i -K_i^{-1} \over q_i -q_i^{-1}},\\
\\
\sum_{r=0}^{1-a_{ij}}(-1)^r
\left[ \begin{array}{c} 1-a_{ij} \\ r \end{array} \right]_{q_i}
(X_i^\pm )^{1-a_{ij}-r}X_j^\pm(X_i^\pm)^r =0 ,~ i \neq j ,
\end{array}
$$
where
$$
K_i=e^{d_ihH_i},~~e^h=q,~~q_i=q^{d_i}=e^{d_ih},
$$
$$
\left[ \begin{array}{c} m \\ n \end{array} \right]_q={[m]_q! \over [n]_q![n-m]_q!} ,~
[n]_q!=[n]_q\ldots [1]_q ,~ [n]_q={q^n - q^{-n} \over q-q^{-1} },
$$
with comultiplication defined by
$$
\Delta_h(H_i)=H_i\otimes 1+1\otimes H_i,~~
\Delta_h(X_i^+)=X_i^+\otimes K_i+1\otimes X_i^+,~~\Delta_h(X_i^-)=X_i^-\otimes 1 +K_i^{-1}\otimes X_i^-,
$$
antipode defined by
$$
S_h(H_i)=-H_i,~~S_h(X_i^+)=-X_i^+K_i^{-1},~~S_h(X_i^-)=-K_iX_i^-,
$$
and counit defined by
$$
\varepsilon_h(H_i)=\varepsilon_h(X_i^\pm)=0.
$$

We shall also use the weight--type generators
$$
Y_i=\sum_{j=1}^l d_i(a^{-1})_{ij}H_j,
$$
and the elements $L_i=e^{hY_i}$.

The Hopf algebra $U_h({\frak g})$ is a quantization of the standard bialgebra structure on $\frak g$ in the sense that $U_h({\frak g})/hU_h({\frak g})=U({\frak g}),~~ \Delta_h=\Delta~(\mbox{mod }h)$, where $\Delta$ is
the standard comultiplication on $U({\frak g})$, and
$$
{\Delta_h -\Delta_h^{opp} \over h}~(\mbox{mod }h)=\delta.
$$
Here
$\delta: {\frak g}\rightarrow {\frak g}\otimes {\frak g}$ is the standard cocycle on $\frak g$, and $\Delta^{opp}_h=\sigma \Delta_h$, $\sigma$ is the permutation in $U_h({\frak g})^{\otimes 2}$,
$\sigma (x\otimes y)=y\otimes x$.
Recall that
$$
\delta (x)=({\rm ad}_x\otimes 1+1\otimes {\rm ad}_x)2r_+,~~ r_+\in {\frak g}\otimes {\frak g},
$$
\begin{equation}\label{rcl}
r_+=\frac 12 \sum_{i=1}^lY_i \otimes H_i + \sum_{\beta \in \Delta_+}(X_{\beta},X_{-\beta})^{-1} X_{\beta}\otimes X_{-\beta}.
\end{equation}
Here $X_{\pm \beta}\in {\frak g}_{\pm \beta}$ are root vectors of $\frak g$.
The element $r_+\in {\frak g}\otimes {\frak g}$ is called a classical r--matrix.

$U_h({\frak g})$ is a quasitriangular Hopf algebra, i.e. there exists an invertible element
${\mathcal R}\in U_h({\frak g})\otimes U_h({\frak g})$, called a universal R--matrix, such that
\begin{equation}\label{quasitr}
\Delta^{opp}_h(a)={\mathcal R}\Delta_h(a){\mathcal R}^{-1}\mbox{ for all } a\in U_h({\frak g}).
\end{equation}

We recall an explicit description of the element ${\mathcal R}$.
Firstly, one can define root vectors of $U_h({\frak g})$  in terms of  a braid group action on $U_h({\frak g})$ (see \cite{ChP}). Let $m_{ij}$, $i\neq j$ be equal to $2,3,4,6$ if $a_{ij}a_{ji}$ is equal to $0,1,2,3$, respectively. The braid group $\mathcal{B}_\g$ associated to $\g$ has generators $T_i$, $i=1,\ldots, l$, and defining relations
$$
T_iT_jT_iT_j\ldots=T_jT_iT_jT_i\ldots
$$
for all $i\neq j$, where there are $m_{ij}$ $T$'s on each side of the equation.

$\mathcal{B}_\g$ acts by algebra automorphisms of $U_h({\frak g})$ as follows:
\begin{eqnarray*}
T_i(X_i^+)=-X_i^-e^{hd_iH_i},~T_i(X_i^-)=-e^{-hd_iH_i}X_i^+,~T_i(H_j)=H_j-a_{ji}H_i, \\
\\
T_i(X_j^+)=\sum_{r=0}^{-a_{ij}}(-1)^{r-a_{ij}}q_i^{-r}
(X_i^+ )^{(-a_{ij}-r)}X_j^+(X_i^+)^{(r)},~i\neq j,\\
\\
T_i(X_j^-)=\sum_{r=0}^{-a_{ij}}(-1)^{r-a_{ij}}q_i^{r}
(X_i^-)^{(r)}X_j^-(X_i^-)^{(-a_{ij}-r)},~i\neq j,
\end{eqnarray*}
where
$$
(X_i^+)^{(r)}=\frac{(X_i^+)^{r}}{[r]_{q_i}!},~(X_i^-)^{(r)}=\frac{(X_i^-)^{r}}{[r]_{q_i}!},~r\geq 0,~i=1,\ldots,l.
$$

Recall that an ordering of a set of positive roots $\Delta_+$ is called normal if all simple roots are written in an arbitrary order, and for any three roots $\alpha,~\beta,~\gamma$ such that
$\gamma=\alpha+\beta$ we have either $\alpha<\gamma<\beta$ or $\beta<\gamma<\alpha$.

For any reduced decomposition $w_0=s_{i_1}\ldots s_{i_D}$ of the longest element $w_0$ of the Weyl group $W$ of $\g$ the ordering
$$
\beta_1=\alpha_{i_1},\beta_2=s_{i_1}\alpha_{i_2},\ldots,\beta_D=s_{i_1}\ldots s_{i_{D-1}}\alpha_{i_D}
$$
is a normal ordering in $\Delta_+$, and there is a one--to--one correspondence between normal orderings of $\Delta_+$ and reduced decompositions of $w_0$ (see \cite{Z1}).

Fix a reduced decomposition $w_0=s_{i_1}\ldots s_{i_D}$ of $w_0$ and define the corresponding root vectors in $U_h({\frak g})$ by
\begin{equation}\label{rootvect}
X_{\beta_k}^\pm=T_{i_1}\ldots T_{i_{k-1}}X_{i_k}^\pm.
\end{equation}

The root vectors $X_{\beta}^+$ satisfy the following relations:
\begin{equation}\label{qcom}
X_{\alpha}^+X_{\beta}^+ - q^{(\alpha,\beta)}X_{\beta}^+X_{\alpha}^+= \sum_{\alpha<\delta_1<\ldots<\delta_n<\beta}C(k_1,\ldots,k_n)
{(X_{\delta_1}^+)}^{(k_1)}{(X_{\delta_2}^+)}^{(k_2)}\ldots {(X_{\delta_n}^+)}^{(k_n)},~\alpha<\beta,
\end{equation}
where for $\alpha \in \Delta_+$ we put ${(X_{\alpha}^\pm)}^{(k)}=\frac{(X_\alpha^\pm)^{k}}{[k]_{q_\alpha}!}$, $k\geq 0$, $q_\alpha =q^{d_i}$ if the positive root $\alpha$ is Weyl group conjugate to the simple root $\alpha_i$, $C(k_1,\ldots,k_n)\in {\Bbb C}[q,q^{-1}]$.

Note that by construction
$$
\begin{array}{l}
X_\beta^+~(\mbox{mod }h)=X_\beta \in {\frak g}_\beta,\\
\\
X_\beta^-~(\mbox{mod }h)=X_{-\beta} \in {\frak g}_{-\beta}
\end{array}
$$
are root vectors of $\frak g$.

Denote by $U_h({\frak n}_+),U_h({\frak n}_-)$ and $U_h(\h)$ the ${\Bbb C}[[h]]$--subalgebras of $U_h({\frak g})$ topologically generated by the
$X_i^+$, by the $X_i^-$ and by the $H_i$, respectively. For any $\alpha\in \Delta_+$ one has $X_{\alpha}^\pm\in U_h({\frak n}_\pm)$.

An explicit expression for $\mathcal R$ may be written by making use of the q--exponential
$$
exp_q(x)=\sum_{k=0}^\infty q^{\frac{1}{2}k(k+1)}{x^k \over [k]_q!}
$$
in terms of which the element $\mathcal R$ takes the form:
\begin{equation}\label{univr}
{\mathcal R}=exp\left[ h\sum_{i=1}^l(Y_i\otimes H_i)\right]\prod_{\beta}
exp_{q_{\beta}}[(1-q_{\beta}^{-2})X_{\beta}^+\otimes X_{\beta}^-],
\end{equation}
where
the product is over all the positive roots of $\frak g$, and the order of the terms is such that
the $\alpha$--term appears to the left of the $\beta$--term if $\alpha >\beta$ with respect to the normal
ordering of $\Delta_+$.

The r--matrix $r_+=\frac 12 h^{-1}({\mathcal R}-1\otimes 1)~~(\mbox{mod }h)$, which is the classical limit of $\mathcal R$,
coincides with the classical r--matrix (\ref{rcl}).

One can calculate the action of the comultiplication on the root vectors $X_{\beta_k}^\pm$ in terms of the universal R--matrix. For instance for $\Delta_h(X_{\beta_k}^+)$ one has
\begin{equation}\label{comult}
\Delta_h(X_{\beta_k}^+)=\widetilde{\mathcal R}^{-1}_{<\beta_k}(X_{\beta_k}^+\otimes e^{h\beta^\vee}+1\otimes X_{\beta_k}^+)\widetilde{\mathcal R}_{<\beta_k},
\end{equation}
where
$$
\widetilde{\mathcal R}_{<\beta_k}=\widetilde{\mathcal R}_{\beta_{k-1}}\ldots \widetilde{\mathcal R}_{\beta_1},~\widetilde{\mathcal R}_{\beta_r}=exp_{q_{\beta_r}}[(1-q_{\beta_r}^{-2})X_{\beta_r}^+\otimes X_{\beta_r}^-].
$$


\section{Realizations of quantum groups associated to Weyl group elements}\label{wqreal}

\setcounter{equation}{0}
\setcounter{theorem}{0}

Our main objects of study are certain specializations of realizations of quantum groups associated to Weyl group elements.
Let $s$ be an element of the Weyl group $W$ of the pair $(\g,\h)$, and $\h'$ the orthogonal complement, with respect to the Killing form, to the subspace of $\h$ fixed by the natural action of $s$ on $\h$. Let $\h'^*$ be the image of $\h'$ in $\h^*$ under the identification $\h^*\simeq \h$ induced by the canonical bilinear form on $\g$.
The restriction of the natural action of $s$ on $\h^*$ to the subspace $\h'^*$ has no fixed points. Therefore one can define the Cayley transform ${1+s \over 1-s }P_{{\h'}^*}$ of the restriction of $s$ to ${\h'}^*$, where $P_{{\h'}^*}$ is the orthogonal projection operator onto ${{\h'}^*}$ in $\h^*$, with respect to the Killing form.

Let
$U_h^{s}({\frak g})$ be the topological algebra over ${\Bbb C}[[h]]$ topologically generated by elements
$e_i , f_i , H_i,~i=1, \ldots l$ subject to the relations:
$$
\begin{array}{l}
[H_i,H_j]=0,~~ [H_i,e_j]=a_{ij}e_j, ~~ [H_i,f_j]=-a_{ij}f_j,~~e_i f_j -q^{ c_{ij}} f_j e_i = \delta _{i,j}{K_i -K_i^{-1} \over q_i -q_i^{-1}} ,\\
\\
c_{ij}=\left( {1+s \over 1-s }P_{{\h'}^*}\alpha_i , \alpha_j \right),~~K_i=e^{d_ihH_i}, \\
\\
\sum_{r=0}^{1-a_{ij}}(-1)^r q^{r c_{ij}}
\left[ \begin{array}{c} 1-a_{ij} \\ r \end{array} \right]_{q_i}
(e_i )^{1-a_{ij}-r}e_j (e_i)^r =0 ,~ i \neq j , \\
\\
\sum_{r=0}^{1-a_{ij}}(-1)^r q^{r c_{ij}}
\left[ \begin{array}{c} 1-a_{ij} \\ r \end{array} \right]_{q_i}
(f_i )^{1-a_{ij}-r}f_j (f_i)^r =0 ,~ i \neq j .
\end{array}
$$

\begin{proposition} {\bf (\cite{S10}, Theorem 4.1)} \label{newreal}
For every solution $n_{ij}\in {\Bbb C},~i,j=1,\ldots ,l$ of equations
\begin{equation}\label{eqpi}
d_jn_{ij}-d_in_{ji}=c_{ij}
\end{equation}
there exists an algebra
isomorphism $\psi_{\{ n\}} : U_h^{s}({\frak g}) \rightarrow
U_h({\frak g})$ defined  by the formulas:
$$
\psi_{\{ n\}}(e_i)=X_i^+ \prod_{p=1}^lL_p^{n_{ip}},~~
\psi_{\{ n\}}(f_i)=\prod_{p=1}^lL_p^{-n_{ip}}X_i^- ,~~
\psi_{\{ n\}}(H_i)=H_i .
$$
\end{proposition}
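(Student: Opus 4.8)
\pr The plan is to check that the formulas for $\psi_{\{ n\}}$ on the generators $e_i,f_i,H_i$ respect all the defining relations of $U_h^{s}(\g)$, so that $\psi_{\{ n\}}$ extends to a homomorphism of topological ${\mathbb C}[[h]]$--algebras $U_h^{s}(\g)\ra U_h(\g)$, and then to produce an explicit two--sided inverse. Put $\mathcal{L}_i=\prod_{p=1}^l L_p^{n_{ip}}=exp(h\sum_{p}n_{ip}Y_p)$, an invertible element of the $h$--adic completion of $U_h(\h)$; with this notation $\psi_{\{ n\}}(e_i)=X_i^+\mathcal{L}_i$, $\psi_{\{ n\}}(f_i)=\mathcal{L}_i^{-1}X_i^-$, $\psi_{\{ n\}}(H_i)=H_i$. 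Everything will rest on the single weight identity $[Y_p,X_j^\pm]=\pm d_p\delta_{pj}X_j^\pm$, which follows from $[H_k,X_j^\pm]=\pm a_{kj}X_j^\pm$ together with $\sum_k(a^{-1})_{pk}a_{kj}=\delta_{pj}$; exponentiating it yields
$$
\mathcal{L}_iX_j^\pm\mathcal{L}_i^{-1}=q^{\pm d_jn_{ij}}X_j^\pm .
$$

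First I would dispose of the easy relations: $[H_i,H_j]=0$, $[H_i,e_j]=a_{ij}e_j$ and $[H_i,f_j]=-a_{ij}f_j$ are immediate because $\mathcal{L}_j$ commutes with $H_i$ and has zero weight. For the relation $e_if_j-q^{c_{ij}}f_je_i=\delta_{ij}(K_i-K_i^{-1})/(q_i-q_i^{-1})$ I would commute the torus factors through the root vectors: when $i\neq j$ both $\psi_{\{ n\}}(e_i)\psi_{\{ n\}}(f_j)$ and $\psi_{\{ n\}}(f_j)\psi_{\{ n\}}(e_i)$ collapse to $X_i^+X_j^-$ times one and the same element of $U_h(\h)$, and the ratio of the two scalars is a power of $q$ whose exponent is, up to sign, $d_jn_{ij}-d_in_{ji}=c_{ij}$ by (\ref{eqpi}); when $i=j$ the torus factors cancel outright and one is reduced to the corresponding relation of $U_h(\g)$ together with the equality $c_{ii}=0$. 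The latter is a general fact: since $s\in W$ preserves the bilinear form on $\h^*$, its restriction to ${\h'}^*$ is orthogonal, so the Cayley transform $\frac{1+s}{1-s}$ of this restriction is skew--adjoint with respect to the form, whence $\left(\frac{1+s}{1-s}P_{{\h'}^*}\alpha_i,\alpha_i\right)=0$.

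The heart of the verification is the quantum Serre relations. Substituting $\psi_{\{ n\}}(e_i)=X_i^+\mathcal{L}_i$ and using $\mathcal{L}_iX_i^+=q^{d_in_{ii}}X_i^+\mathcal{L}_i$ to move all torus factors to the right, the term $(e_i)^{1-a_{ij}-r}e_j(e_i)^r$ becomes, with $m=1-a_{ij}$, a scalar times $(X_i^+)^{m-r}X_j^+(X_i^+)^r$ times the $r$--independent element $\mathcal{L}_i^{m}\mathcal{L}_j$; the exponents of $q$ accumulated when commuting inside the powers of $e_i$ sum, via the identity $\binom{m-r}{2}+\binom{r}{2}+r(m-r)=\binom{m}{2}$, to an $r$--independent quantity, and the only $r$--dependence that survives, produced by one passage of $\mathcal{L}_i$ past $X_j^+$ and of $\mathcal{L}_j$ past $(X_i^+)^r$, is a factor $q^{-rc_{ij}}$ by (\ref{eqpi}). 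Hence the factor $q^{rc_{ij}}$ built into the deformed Serre relation of $U_h^{s}(\g)$ cancels it exactly, and the whole expression becomes $\mathcal{L}_i^{m}\mathcal{L}_j$ times the ordinary Serre relation of $U_h(\g)$, which vanishes; the relation in the $f_i$ is treated symmetrically.

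Finally, $\psi_{\{ n\}}$ is an isomorphism: it fixes $\h$, hence all the $L_p$, so it is surjective ($X_i^+=\psi_{\{ n\}}(e_i\mathcal{L}_i^{-1})$, $X_i^-=\psi_{\{ n\}}(\mathcal{L}_if_i)$, $H_i=\psi_{\{ n\}}(H_i)$), and the map $\phi:U_h(\g)\ra U_h^{s}(\g)$ defined by $X_i^+\mapsto e_i\mathcal{L}_i^{-1}$, $X_i^-\mapsto\mathcal{L}_if_i$, $H_i\mapsto H_i$ (with $\mathcal{L}_i=exp(h\sum_p n_{ip}Y_p)$ now formed inside $U_h^{s}(\h)$) respects the relations of $U_h(\g)$ by the same kind of computation and satisfies $\phi\circ\psi_{\{ n\}}=\mathrm{id}$ and $\psi_{\{ n\}}\circ\phi=\mathrm{id}$ on generators. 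The step I expect to be the main obstacle is precisely the $q$--power bookkeeping in the Serre relations: one must commute the torus elements through long products of root vectors and check that the resulting exponents assemble into exactly the weights $q^{rc_{ij}}$ prescribed in the presentation of $U_h^{s}(\g)$, which is where the defining equation (\ref{eqpi}) is used in an essential way, while also keeping track of the fact that the $\mathcal{L}_i$ lie only in the $h$--adic completion of $U_h(\h)$, so that all the manipulations have to be carried out in the topological rather than the purely algebraic setting.
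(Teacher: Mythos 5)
The paper does not actually prove this proposition --- it is quoted from \cite{S10}, Theorem 4.1 --- so your direct verification of the relations on generators followed by the construction of an explicit inverse is the natural reconstruction, and most of it is sound: the weight identity $\mathcal{L}_iX_j^{\pm}\mathcal{L}_i^{-1}=q^{\pm d_jn_{ij}}X_j^{\pm}$, the skew--adjointness of the Cayley transform giving $c_{ii}=0$, the binomial identity $\binom{m-r}{2}+\binom{r}{2}+r(m-r)=\binom{m}{2}$ in the Serre bookkeeping, and the inverse $\phi$ are all correct.

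The genuine gap is the phrase ``up to sign'' in the $i\neq j$ case of the cross relation: the sign is exactly what has to be checked, and if you carry the computation out you will find it does not close with the conventions as printed. Explicitly, writing $T=\mathcal{L}_i\mathcal{L}_j^{-1}$ and using $X_i^+X_j^-=X_j^-X_i^+$ for $i\neq j$, one gets $\psi_{\{n\}}(e_i)\psi_{\{n\}}(f_j)=q^{d_jn_{jj}-d_jn_{ij}}X_i^+X_j^-T$ and $\psi_{\{n\}}(f_j)\psi_{\{n\}}(e_i)=q^{d_jn_{jj}-d_in_{ji}}X_i^+X_j^-T$, so the ratio of the two scalars is $q^{d_in_{ji}-d_jn_{ij}}=q^{-c_{ij}}$, not $q^{c_{ij}}$; the relation actually preserved is $e_if_j-q^{c_{ji}}f_je_i=\delta_{ij}(K_i-K_i^{-1})/(q_i-q_i^{-1})$. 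By contrast, your Serre computation is right: there the surviving $r$--dependent factor is $q^{r(d_in_{ji}-d_jn_{ij})}=q^{-rc_{ij}}$, which is cancelled by the prefactor $q^{rc_{ij}}$ precisely when (\ref{eqpi}) holds as stated. Thus the two families of relations impose opposite sign requirements on $(n_{ij})$, and since $c_{ij}=-c_{ji}$ is not identically zero the presentation as transcribed is internally inconsistent; your proof cannot go through verbatim until you fix the convention (e.g.\ read $q^{c_{ji}}$ in the $e_if_j$--relation, or flip the sign in (\ref{eqpi}) \emph{and} in the exponents $q^{rc_{ij}}$ of the Serre relations simultaneously). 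You should carry out this computation explicitly and state which convention you are proving the statement for, rather than leaving the sign undetermined.
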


The general solution of equation (\ref{eqpi})
is given by
\begin{equation}\label{eq3}
n_{ij}=\frac 1{2d_j} (c_{ij} + {s_{ij}}),
\end{equation}
where $s_{ij}=s_{ji}$.

The algebra $U_h^{s}({\frak g})$ is called the realization of the quantum group $U_h({\frak g})$ corresponding to the element $s\in W$.

Now we recall the definition of certain normal orderings of root systems associated to Weyl group elements introduced in \cite{S10}. These orderings will play a crucial role in the definition of q-W--algebras.

Let $s$ be an element of the Weyl group $W$ of the pair $(\g,\h)$.
By Theorem C in \cite{C} $s$ can be represented as a product of two involutions,
\begin{equation}\label{inv}
s=s^1s^2,
\end{equation}
where $s^1=s_{\gamma_1}\ldots s_{\gamma_n}$, $s^2=s_{\gamma_{n+1}}\ldots s_{\gamma_{l'}}$, the roots in each of the sets $\gamma_1, \ldots, \gamma_n$ and ${\gamma_{n+1}}, \ldots, {\gamma_{l'}}$ are positive and mutually orthogonal, and
the roots $\gamma_1, \ldots, \gamma_{l'}$ form a linear basis of $\h'^*$.

\begin{proposition}{\bf (\cite{S10}, Peoposition 5.1)}\label{pord}
Let $s\in W$ be an element of the Weyl group $W$ of the pair $(\g,\h)$, $\Delta$ the root system of the pair $(\g,\h)$ Then there is a system of positive roots $\Delta_+$ associated to (the conjugacy class of) $s$ such that
the decomposition $s=s^1s^2$ is reduced in the sense that ${l}(s)={l}(s^2)+{l}(s^1)$, where ${l}(\cdot)$ is the length function in $W$ with respect to the system of simple roots in $\Delta_+$, and $\Delta_{s}=\Delta_{s^{2}}\bigcup s^2(\Delta_{s^{1}})$, $\Delta_{s^{-1}}=\Delta_{s^{1}}\bigcup s^1(\Delta_{s^{2}})$ (disjoint unions). Here $s^1,s^2$ are the involutions entering decomposition (\ref{inv}), $s^1=s_{\gamma_1}\ldots s_{\gamma_n}$, $s^2=s_{\gamma_{n+1}}\ldots s_{\gamma_{l'}}$, the roots in each of the sets $\gamma_1, \ldots, \gamma_n$ and ${\gamma_{n+1}},\ldots, {\gamma_{l'}}$ are positive and mutually orthogonal.

Moreover, there is a normal ordering of the root system $\Delta_+$ such that
$$
\gamma_1< \gamma_2<\gamma_3<\ldots< \gamma_n<\gamma_{n+1}<\ldots< \gamma_{l'}
$$
with respect to this normal ordering.

The length of the ordered segment $\Delta_{\m_+}\subset \Delta$,
\begin{eqnarray}
\Delta_{\m_+}=\{\beta\in \Delta_+: \gamma_1\leq \beta\leq \gamma_{l'}\}
\end{eqnarray}
is equal to
\begin{equation}\label{dimm}
D-(\frac{l(s)-l'}{2}+D_0),
\end{equation}
where $D$ is the number of roots in $\Delta_+$, $l(s)$ is the length of $s$ and $D_0$ is the number of positive roots fixed by the action of $s$.
\end{proposition}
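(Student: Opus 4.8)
The plan is to establish the four assertions of Proposition~\ref{pord} — (i) existence of a system of positive roots $\Delta_+$, depending only on the conjugacy class of $s$, for which the Carter decomposition $s=s^1s^2$ of (\ref{inv}) satisfies $l(s)=l(s^1)+l(s^2)$; (ii) the two disjoint–union identities for $\Delta_s$ and $\Delta_{s^{-1}}$; (iii) existence of a normal ordering of this $\Delta_+$ with $\gamma_1<\cdots<\gamma_{l'}$; and (iv) the formula (\ref{dimm}) for $|\Delta_{\m_+}|$ — by constructing $\Delta_+$ and the normal ordering simultaneously, reading off (i) from the construction, deducing (ii) formally, and obtaining (iv) by counting the positive roots that fall outside the segment between $\gamma_1$ and $\gamma_{l'}$.

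I would first isolate the roots fixed by $s$. Since $\h'$ is the orthogonal complement of the $s$-fixed subspace of $\h$, the roots orthogonal to $\h'$ are exactly the roots fixed by $s$; they form a (possibly reducible) root subsystem $\Delta_0$, and once $\Delta_+$ is chosen $D_0=|\Delta_0\cap\Delta_+|$. Because $\gamma_1,\dots,\gamma_{l'}$ span $\h'^*$ and $s$ acts on $\h'^*$ without fixed vectors, the Carter data genuinely lives on the $\h'^*$–part, and in the normal ordering to be constructed the positive roots of $\Delta_0$ will be forced outside the segment $[\gamma_1,\gamma_{l'}]$; this accounts for the $D_0$ summand in (\ref{dimm}).

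The main work is the simultaneous construction of $\Delta_+$ and of a reduced word $w_0=s_{j_1}\cdots s_{j_D}$ whose associated normal ordering $\beta_k=s_{j_1}\cdots s_{j_{k-1}}(\alpha_{j_k})$ meets the Carter roots in the order $\gamma_1,\dots,\gamma_{l'}$. Starting from Carter's description of $s^1=s_{\gamma_1}\cdots s_{\gamma_n}$ and $s^2=s_{\gamma_{n+1}}\cdots s_{\gamma_{l'}}$ with mutually orthogonal roots within each block, I would fix $\Delta_+$ by a vector of $\h_{\mathbb R}$ in general position relative to the $\gamma_i$ and to $\Delta_0$, chosen compatibly with Carter's admissible diagram of $s$ so that the two orthogonal blocks become ``unlinked''. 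The underlying mechanism is the parabolic–like behaviour of an orthogonal set of roots: once $\gamma_{n+1}<\cdots<\gamma_{l'}$ are consecutive among the $\gamma$'s, the commuting reflections $s_{\gamma_{n+1}},\dots,s_{\gamma_{l'}}$ can be threaded into a reduced word for $w_0$ so that $l(s^2)=|\Delta_{s^2}|$ is accounted for exactly by the part of the ordering up to $\gamma_{l'}$, and symmetrically for $s^1$; the remaining point is that the concatenation of the $s^1$– and $s^2$–blocks produces no cancellation, i.e. $s^2(\Delta_{s^1})\subset\Delta_+$, equivalently $l(s)=l(s^1)+l(s^2)$. I would carry this out by induction on $l'$, peeling off the extreme Carter roots $\gamma_1$ and $\gamma_{l'}$ and descending to the root subsystem spanned by the remaining ones. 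This step is where essentially all of the difficulty lies, and I expect it to be the main obstacle; once the ordering is produced the rest is bookkeeping.

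Granting the construction, (ii) is formal: for any factorisation $w=uv$ with $l(w)=l(u)+l(v)$ the standard inversion–set identity gives $\Delta_{uv}=\Delta_v\sqcup v^{-1}(\Delta_u)$, a disjoint union contained in $\Delta_+$. With $u=s^1$, $v=s^2$, and $s^2$ an involution, this reads $\Delta_s=\Delta_{s^2}\sqcup s^2(\Delta_{s^1})$; applying it to the factorisation $s^{-1}=s^2s^1$ (also reduced, since length additivity is symmetric and $s^1,s^2$ are involutions) gives $\Delta_{s^{-1}}=\Delta_{s^1}\sqcup s^1(\Delta_{s^2})$, and comparing cardinalities recovers assertion (i). For (iv), write $|\Delta_{\m_+}|=D-a-b$ where $a$ is the number of positive roots before $\gamma_1$ and $b$ the number after $\gamma_{l'}$ in the ordering. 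The $D_0$ positive roots of $\Delta_0$ lie outside $[\gamma_1,\gamma_{l'}]$ and contribute to $a+b$; the remaining roots outside the segment are then identified, from the disjoint–union description of $\Delta_s$ read off the chosen reduced word together with the parity relation $l(s)\equiv l'\pmod 2$ (valid because $\det s=(-1)^{l'}=(-1)^{l(s)}$, so $\tfrac{l(s)-l'}{2}\in\mathbb N$), as $\tfrac{l(s)-l'}{2}$ further positive roots arising from a pairing governed by the action of $s$ on $\Delta_s$. Hence $a+b=D_0+\tfrac{l(s)-l'}{2}$ and $|\Delta_{\m_+}|=D-\big(\tfrac{l(s)-l'}{2}+D_0\big)$, which is (\ref{dimm}).
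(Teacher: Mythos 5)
The first thing to say is that the paper contains no proof of this statement at all: Proposition \ref{pord} is imported verbatim from \cite{S10} (Proposition 5.1 there), so there is no in-paper argument to compare yours against, and I can only judge the proposal on its own merits.

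On those merits there is a genuine gap, and it sits exactly where you say it does. The content of the proposition \emph{is} the existence of a system of positive roots $\Delta_+$ and a normal ordering of it satisfying three simultaneous compatibility conditions: (a) $l(s)=l(s^1)+l(s^2)$, equivalently $s^2(\Delta_{s^1})\subset\Delta_+$; (b) the $\gamma_i$ occur in the prescribed order $\gamma_1<\cdots<\gamma_{l'}$; and (c) the segment $[\gamma_1,\gamma_{l'}]$ excludes all $D_0$ positive roots fixed by $s$ and exactly $\frac{l(s)-l'}{2}$ further roots. Your proposal describes a strategy for producing $\Delta_+$ and the ordering (a generic vector compatible with Carter's admissible diagram, threading the commuting reflections into a reduced word for $w_0$, induction on $l'$ by peeling off $\gamma_1$ and $\gamma_{l'}$) and then states that "this step is where essentially all of the difficulty lies." That step is the theorem; deferring it means the existence claim is not proved. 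Nothing in the sketch verifies that the chosen $\Delta_+$ makes $s=s^1s^2$ reduced, nor that the required ordering of the $\gamma_i$ can be achieved \emph{simultaneously} with pushing $\overline{\Delta}_0\cap\Delta_+$ outside the segment. The count (\ref{dimm}) is likewise asserted rather than derived: the identification of the remaining $\frac{l(s)-l'}{2}$ excluded roots via "a pairing governed by the action of $s$ on $\Delta_s$" is exactly the combinatorial analysis one would have to carry out, and it is absent. What \emph{is} correct and complete in your write-up is the peripheral material: the parity relation $l(s)\equiv l'\pmod 2$ from $\det s=(-1)^{l'}=(-1)^{l(s)}$, and the purely formal deduction of $\Delta_s=\Delta_{s^2}\sqcup s^2(\Delta_{s^1})$ and $\Delta_{s^{-1}}=\Delta_{s^1}\sqcup s^1(\Delta_{s^2})$ from length additivity applied to $s=s^1s^2$ and $s^{-1}=s^2s^1$, using that $s^1,s^2$ are involutions. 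But those follow in two lines once the construction is done; the construction itself, which in \cite{S10} occupies a substantial and delicate argument, is what is missing here.
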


The system of positive roots $\Delta_+$ equipped with the normal ordering as in the previous proposition is called the normally ordered system of positive roots associated to the (conjugacy class of) the Weyl group element $s\in W$.

Let
\begin{equation*}
\overline{\Delta}_0=\{\alpha\in \Delta|s(\alpha)=\alpha\},
\end{equation*}
and $\Gamma$ the set of simple roots in $\Delta_+$.
We shall need the parabolic subalgebra $\p$ of $\g$ and the parabolic subgroup $P$ associated to the subset $\Gamma_0=\Gamma\bigcap \overline{\Delta}_{0}$ of simple roots. Let $\n$ and $\l$ be the nilradical and the Levi factor of $\p$, $N$ and $L$ the unipotent radical and the Levi factor of $P$, respectively. Note that we have natural inclusions of Lie algebras $\p\supset\b_+\supset\n$, where $\b_+$ is the Borel subalgebra of $\g$ corresponding to the system $\Gamma$ of simple roots, and $\overline{\Delta}_{0}$ is the root system of the reductive Lie algebra $\l$. We also denote by $\opn$ the nilpotent subalgebra opposite to $\n$.
The linear subspace of $\g$ generated by the root vectors $X_{\alpha}$ ($X_{-\alpha}$), $\alpha\in \Delta_{\m_+}$ is in fact a Lie subalgebra ${\m_+}\subset \g$ (${\m_-}\subset \g$).

Denote by $U_h^{s}({\frak n}_\pm) $ the subalgebra in $U_h^{s}({\frak g})$ generated by
$e_i ~(f_i) ,i=1, \ldots, l$.
Let $U_h^{s}({\frak h})$ be the subalgebra in $U_h^{s}({\frak g})$ generated by $H_i,~i=1,\ldots ,l$.

We shall construct analogues of root vectors for $U_h^{s}({\frak g})$.
It is convenient to introduce an operator $K\in {\rm End}~{\frak h}$ defined by
\begin{equation}\label{Kdef}
KH_i=\sum_{j=1}^l{n_{ij} \over d_i}Y_j.
\end{equation}

\begin{proposition}{\bf (\cite{S10}, Proposition 4.2, Theorem 6.1)}\label{rootss}
Let $s\in W$ be an element of the Weyl group $W$ of the pair $(\g,\h)$, $\Delta$ the root system of the pair $(\g,\h)$. Let $U_h^{s}({\frak g})$ be the realization of the quantum group $U_h({\frak g})$ associated to $s$.

For any solution of equation (\ref{eqpi}) and any normal ordering of the root system $\Delta_+$
the elements $e_{\beta}=\psi_{\{ n\}}^{-1}(X_{\beta}^+e^{hK\beta^\vee})$ and
$f_{\beta}=\psi_{\{ n\}}^{-1}(e^{-hK\beta^\vee}X_{\beta}^-),~\beta \in \Delta_+$
lie in the subalgebras $U_h^{s}({\frak n}_+)$ and $U_h^{s}({\frak n}_-)$, respectively.

Let $\Delta_+$ be the system of positive roots associated to $s$.
Let $e_\beta\in U_h^{s}({\n_+})$, $\beta \in \Delta_+$ be the root vectors constructed with the help of the normal ordering of $\Delta_+$ associated to $s$.

Then elements $e_\beta\in U_h^{s}({\n_+})$, $\beta \in \Delta_{\m_+}$ generate a subalgebra $U_h^{s}({\frak m}_+)\subset U_h^{s}({\frak g})$ such that $U_h^{s}({\frak m}_+)/hU_h^{s}({\frak m}_+)\simeq U({\m_+})$, where ${\m_+}$ is the Lie subalgebra of $\g$ generated by the root vectors $X_\alpha$, $\alpha\in \Delta_{\m_+}$.
\end{proposition}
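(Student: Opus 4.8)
The plan is to move the question to $U_h(\g)$ through the isomorphism $\psi_{\{ n\}}$ of Proposition \ref{newreal} and then use the commutation relations (\ref{qcom}) together with the PBW theorem for $U_h(\n_+)$.

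For the first assertion, rewrite $\psi_{\{ n\}}(e_i)=X_i^+\prod_{p=1}^lL_p^{n_{ip}}=X_i^+e^{hK\alpha_i^\vee}$, the last equality being the definition (\ref{Kdef}) of $K$ together with $\alpha_i^\vee=d_iH_i$. Since $\psi_{\{ n\}}$ is an algebra isomorphism and $U_h^{s}(\n_+)$ is generated by the $e_i$, the image $\psi_{\{ n\}}(U_h^{s}(\n_+))$ is the subalgebra $\mathcal{A}\subset U_h(\g)$ generated by the $X_i^+e^{hK\alpha_i^\vee}$, and the claim $e_\beta\in U_h^{s}(\n_+)$ amounts to $X_\beta^+e^{hK\beta^\vee}\in\mathcal{A}$. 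The point is that for any weight $\mu$ the element $e^{hK\mu^\vee}$ lies in $U_h(\h)$ and therefore commutes with any weight vector of $U_h(\n_+)$ up to a unit of ${\Bbb C}[[h]]$ (a scalar power of $q$). Consequently, for any monomial, $(X_{i_1}^+e^{hK\alpha_{i_1}^\vee})\cdots(X_{i_m}^+e^{hK\alpha_{i_m}^\vee})=c\,X_{i_1}^+\cdots X_{i_m}^+\,e^{hK\nu^\vee}$ with $\nu=\alpha_{i_1}+\ldots+\alpha_{i_m}$ and $c$ a unit, so $x\,e^{hK\nu^\vee}\in\mathcal{A}$ for every element $x$ of the weight--$\nu$ component of $U_h(\n_+)$, the $X_i^+$ being generators. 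Taking $x=X_\beta^+$ and $\nu=\beta$ gives $X_\beta^+e^{hK\beta^\vee}\in\mathcal{A}$, hence $e_\beta\in U_h^{s}(\n_+)$; the statement for $f_\beta$ follows identically, with $X_\beta^-$, the factor $e^{-hK\beta^\vee}$ on the left, and the order of factors reversed.

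For the second assertion, fix the system $\Delta_+$ and the normal ordering associated to $s$ as in Proposition \ref{pord}. The set $\Delta_{\m_+}=\{\beta\in\Delta_+:\gamma_1\le\beta\le\gamma_{l'}\}$ is a segment of that ordering and is therefore convex: if $\alpha,\beta\in\Delta_{\m_+}$ and $\alpha+\beta\in\Delta$, then $\alpha+\beta$ lies strictly between $\alpha$ and $\beta$ and hence in $\Delta_{\m_+}$; this is also the reason $\m_+=\bigoplus_{\alpha\in\Delta_{\m_+}}\g_\alpha$ is a Lie subalgebra, as recorded before the proposition. By convexity, for $\alpha,\beta\in\Delta_{\m_+}$ every root $\delta_j$ occurring on the right--hand side of (\ref{qcom}) again lies in $\Delta_{\m_+}$; hence the subalgebra $\mathcal{U}\subset U_h(\n_+)$ generated by $\{X_\beta^+:\beta\in\Delta_{\m_+}\}$ has, as a topological ${\Bbb C}[[h]]$--module, basis the ordered monomials $\prod_{\beta\in\Delta_{\m_+}}(X_\beta^+)^{m_\beta}$ taken in the fixed normal order, independence coming from the PBW basis of $U_h(\n_+)$ and spanning from the straightening algorithm, which by the previous sentence never leaves $\Delta_{\m_+}$. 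Repeating the ``move the Cartan factors to the right'' computation of the first part, $\psi_{\{ n\}}(U_h^{s}(\m_+))$ is the topological span of the elements $u\,e^{hK\nu^\vee}$ with $u$ a weight--$\nu$ element of $\mathcal{U}$; since right multiplication by the unit $e^{hK\nu^\vee}$ is bijective and distinct weight spaces of $U_h(\g)$ are ${\Bbb C}[[h]]$--independent, the ordered monomials $\prod_{\beta\in\Delta_{\m_+}}e_\beta^{m_\beta}$ form a topological ${\Bbb C}[[h]]$--basis of $U_h^{s}(\m_+)$. Finally, modulo $h$ one has $e^{hK\nu^\vee}\equiv1$ and $\psi_{\{ n\}}$ becomes the identity of $U(\g)$, so $e_\beta\equiv X_\beta\in\g_\beta$ and the above basis of $U_h^{s}(\m_+)$ reduces to a PBW basis of $U(\m_+)$; since the $X_\beta$, $\beta\in\Delta_{\m_+}$, generate $\m_+$, the resulting algebra map $U_h^{s}(\m_+)/hU_h^{s}(\m_+)\to U(\m_+)$ is onto, and comparison of bases shows it is an isomorphism.

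The substantive inputs here are the PBW theorem for $U_h(\n_+)$ in the normal ordering associated to $s$ and the convexity of the segment $\Delta_{\m_+}$; together they make the relations (\ref{qcom}) close up inside $\Delta_{\m_+}$ and thus give $\mathcal{U}$ its PBW basis and $\m_+$ its Lie algebra structure. The hard part is to make the passage $q\to1$ faithful, i.e.\ to check that $\psi_{\{n\}}(U_h^{s}(\m_+))$ meets $hU_h(\g)$ in exactly $h\,\psi_{\{n\}}(U_h^{s}(\m_+))$ so that no relations are lost or gained at $q=1$. This is precisely what the compatibility of the PBW basis of $\mathcal{U}$ with that of $U_h(\n_+)$ provides, once one has verified throughout that the scalars produced when commuting the Cartan factors $e^{hK\nu^\vee}$ past root vectors, as well as the $q_\beta$--factorials relating ordinary to divided powers, are units of ${\Bbb C}[[h]]$ rather than elements of $h{\Bbb C}[[h]]$.
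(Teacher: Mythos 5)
The paper does not prove this proposition itself but imports it from \cite{S10} (Proposition 4.2 and Theorem 6.1); your argument is correct and is essentially the one used there — identifying $\psi_{\{n\}}(e_i)=X_i^+e^{hK\alpha_i^\vee}$ and pushing the Cartan factors to the right for the first claim, and using the convexity of the ordered segment $\Delta_{\m_+}$ together with the Levendorskii--Soibelman relations (\ref{qcom}) to get a PBW basis of $U_h^s(\m_+)$ compatible with that of $U_h(\g)$, which makes the specialization at $h=0$ faithful. Nothing further is needed.
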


The realizations $U_h^{s}({\frak g})$ of the quantum group $U_h({\frak g})$
are connected with quantizations of some nonstandard bialgebra structures on $\frak g$. At the quantum level
changing bialgebra structure corresponds to the so--called Drinfeld twist (see \cite{S10}, Section 4).

Equip $U_h^{s}({\frak g})$ with the comultiplication $\Delta_{s}$ given by
$$
\begin{array}{c}
\Delta_{s}(H_i)=H_i\otimes 1+1\otimes H_i,\\
\\
\Delta_{s}(e_i)=e_i\otimes e^{hd_i({2 \over 1-s}P_{{\h'}}+P_{{\h'}^\perp})H_i}+1\otimes e_i,~~
\Delta_{s}(f_i)=f_i\otimes e^{-hd_i{1+s \over 1-s}P_{{\h'}}H_i}+e^{-hd_iH_i}\otimes f_i,
\end{array}
$$
where $P_{{\h'}^\perp}$ is the orthogonal projection operator onto the orthogonal complement ${{\h'}^\perp}$ to $\h'$ in $\h$ with respect to the Killing form, and the antipode $S_s(x)$ given by
$$
S_s(e_i)=-e_ie^{-hd_i({2 \over 1-s}P_{{\h'}}+P_{{\h'}^\perp})H_i},~S_s(f_i)=-e^{hd_iH_i}f_ie^{hd_i{1+s \over 1-s}P_{{\h'}}H_i},~S_s(H_i)=-H_i.
$$

Then $U_h^{s}({\frak g})$ becomes a quasitriangular topological Hopf algebra with the universal R--matrix
${\mathcal R}^{s}$,
\begin{equation}\label{rmatrspi}
\begin{array}{l}
{\mathcal R}^{s}=exp\left[ h(\sum_{i=1}^l(Y_i\otimes H_i)+
\sum_{i=1}^l {1+s \over 1-s }P_{{\h'}}H_i\otimes Y_i) \right]\times \\
\prod_{\beta}
exp_{q_{\beta}}[(1-q_{\beta}^{-2})e_{\beta} \otimes
e^{h{1+s \over 1-s}P_{{\h'}} \beta^\vee}f_{\beta}],
\end{array}
\end{equation}
where $P_{{\h'}}$ is the orthogonal projection operator onto $\h'$ in $\h$ with respect to the Killing form.

The element ${\mathcal R}^{s}$ may be also represented in the form
\begin{equation}\label{rmatrspi'}
\begin{array}{l}
{\mathcal R}^{s}=
\prod_{\beta}
exp_{q_{\beta}}[(1-q_{\beta}^{-2})e_{\beta}e^{-h({1+s \over 1-s}P_{{\h'}}+1)\beta^\vee}\otimes e^{h\beta^\vee}f_\beta]\times \\
exp\left[ h(\sum_{i=1}^l(Y_i\otimes H_i)+\sum_{i=1}^l {1+s \over 1-s }P_{{\h'}}H_i\otimes Y_i)\right] .
\end{array}
\end{equation}

Note that the Hopf algebra $U_h^{s}({\frak g})$ is a quantization of the bialgebra structure on $\frak g$
defined by the cocycle
\begin{equation}\label{cocycles}
\delta (x)=({\rm ad}_x\otimes 1+1\otimes {\rm ad}_x)2r^{s}_+,~~ r^{s}_+\in {\frak g}\otimes {\frak g},
\end{equation}
where $r^{s}_+=r_+ + \frac 12 \sum_{i=1}^l {1+s \over 1-s }P_{{\h'}}H_i\otimes Y_i$, and $r_+$ is given by (\ref{rcl}).

Using formula (\ref{comult}) and Proposition 4.3 in \cite{S10} one can also easily find that
\begin{equation}\label{comults}
\Delta_s(e_{\beta_k})=(\widetilde{\mathcal R}_{<\beta_k}^s)^{-1}(e_{\beta_k}\otimes e^{h{1+s \over 1-s}P_{{\h'}}\beta^\vee}+1\otimes e_{\beta_k})\widetilde{\mathcal R}^s_{<\beta_k},
\end{equation}
where
$$
\widetilde{\mathcal R}^s_{<\beta_k}=\widetilde{\mathcal R}^s_{\beta_{k-1}}\ldots \widetilde{\mathcal R}^s_{\beta_1},~\widetilde{\mathcal R}^s_{\beta_r}=exp_{q_{\beta_r}}[(1-q_{\beta_r}^{-2})e_{\beta_r}\otimes e^{h{1+s \over 1-s}P_{{\h'}}\beta^\vee}f_{\beta_r}],
$$
and
$$
(\widetilde{\mathcal R}_{<\beta_k}^s)^{-1}=(\widetilde{\mathcal R}_{\beta_{1}}^s)^{-1}\ldots (\widetilde{\mathcal R}^s_{\beta_{k-1}})^{-1},~(\widetilde{\mathcal R}^s_{\beta_r})^{-1}=exp_{q_{\beta_r^{-1}}}[(1-q_{\beta_r}^{2})e_{\beta_r}\otimes e^{h{1+s \over 1-s}P_{{\h'}}\beta^\vee}f_{\beta_r}].
$$

We shall actually need not the algebras $U_h^{s}({\frak g})$ themselves but some their specializations defined over certain rings and over the field of complex numbers. They are similar to the rational form, the restricted integral form and to its specialization for the standard quantum group $U_h(\g)$. The motivations of the definitions given below will be clear in Section \ref{qplproups}. The results below are slight modifications of similar statements for $U_h(\g)$, and we refer to \cite{ChP}, Ch. 9 for the proofs.

We start with the observation that by the results of Section 7 in \cite{S10} the numbers
\begin{equation}\label{rat}
p_{ij}=\left( {1+s \over 1-s }P_{{\h'}}Y_i,Y_j\right)+(Y_i,Y_j)
\end{equation}
are rational, $p_{ij}\in \mathbb{Q}$.
Denote by $d$ the smallest integer number divisible by all the denominators of the rational numbers $p_{ij}/2$, $i,j=1,\ldots, l$.

Let $U_q^{s}({\frak g})$ be the $\mathbb{C}(q^{\frac{1}{2d}})$-subalgebra of $U_h^{s}({\frak g})$ generated by the elements $e_i , f_i , t_i^{\pm 1}=\exp(\pm\frac{h}{2d}H_i),~i=1, \ldots, l$.

The defining relations for the algebra $U_q^{s}({\frak g})$ are
\begin{equation}\label{sqgr1}
\begin{array}{l}
t_it_j=t_jt_i,~~t_it_i^{-1}=t_i^{-1}t_i=1,~~ t_ie_jt_i^{-1}=q^{\frac{a_{ij}}{2d}}e_j, ~~t_if_jt_i^{-1}=q^{-\frac{a_{ij}}{2d}}f_j,\\
\\
e_i f_j -q^{ c_{ij}} f_j e_i = \delta _{i,j}{K_i -K_i^{-1} \over q_i -q_i^{-1}} , c_{ij}=\left( {1+s \over 1-s }P_{{\h'}^*}\alpha_i , \alpha_j \right),\\
 \\
K_i=t_i^{2d d_i}, \\
 \\
\sum_{r=0}^{1-a_{ij}}(-1)^r q^{r c_{ij}}
\left[ \begin{array}{c} 1-a_{ij} \\ r \end{array} \right]_{q_i}
(e_i )^{1-a_{ij}-r}e_j (e_i)^r =0 ,~ i \neq j , \\
 \\
\sum_{r=0}^{1-a_{ij}}(-1)^r q^{r c_{ij}}
\left[ \begin{array}{c} 1-a_{ij} \\ r \end{array} \right]_{q_i}
(f_i )^{1-a_{ij}-r}f_j (f_i)^r =0 ,~ i \neq j .
\end{array}
\end{equation}
Note that by the choice of $d$ we have $q^{c_{ij}}\in \mathbb{C}[q^{\frac{1}{2d}},q^{-\frac{1}{2d}}]$.

The second form of $U_h^{s}({\frak g})$ is a subalgebra $U_\mathcal{A}^{s}(\g)$ in $U_q^s(\g)$ over the ring $\mathcal{A}=\mathbb{C}[q^{\frac{1}{2d}},q^{-\frac{1}{2d}},\frac{1}{[2]_{q_i}},\ldots ,\frac{1}{[r]_{q_i}}]$, where $i=1, \ldots ,l$, $r$ is the maximal number $k_i$ that appears in the right--hand sides of formulas (\ref{qcom}) for various $\alpha$ and $\beta$. $U_\mathcal{A}^{s}(\g)$ is the subalgebra in $U_q^s(\g)$ generated over $\mathcal{A}$ by the elements $t_i^{\pm 1},~{K_i -K_i^{-1} \over q_i -q_i^{-1}},~e_i,~f_i,~i=1,\ldots ,l$.

The most important for us is the specialization $U_\varepsilon^s(\g)$ of $U_\mathcal{A}^{s}(\g)$, $U_\varepsilon^s(\g)=U_\mathcal{A}^{s}(\g)/(q^{\frac{1}{2d}}-\varepsilon^{\frac{1}{2d}})U_\mathcal{A}^{s}(\g)$, $\varepsilon\in \mathbb{C}^*$, $[r]_{\varepsilon_i}!\neq 0$, $i=1, \ldots ,l$. Note that $[r]_{1}!\neq 0$, and hence one can define the specialization $U_1^s(\g)$.

$U_q^s(\g)$, $U_\mathcal{A}^{s}(\g)$ and $U_\varepsilon^s(\g)$ are Hopf algebras with the comultiplication induced from $U_h^s(\g)$. If in addition $\varepsilon^{2d_i}\neq 1$, $i=1,\ldots, l$, then $U_\varepsilon^s(\g)$ is generated over $\mathbb{C}$ by $t_i^{\pm 1},~e_i,~f_i,~i=1,\ldots ,l$ subject to relations (\ref{sqgr1}) where $q=\varepsilon$.
The algebra $U_\mathcal{A}^{s}(\g)$ has a similar description.
The elements $t_i$ are central in the algebra $U_1^s(\g)$, and the quotient of $U_1^s(\g)$ by the two--sided ideal generated by $t_i-1$ is isomorphic to $U(\g)$. Note that none of the subalgebras of $U_h^s(\g)$ introduced above is quasitriangular.

As usual, one can define highest weight, Verma and finite--dimensional modules for all forms and specializations of the quantum group $U_h^{s}({\frak g})$ introduced above (see \cite{S10}, Section 7).

For the solution $n_{ij}=\frac{1}{2d_j}c_{ij}$ to equations (\ref{eqpi}) the root vectors $e_{\beta},f_\beta$ belong to all the above introduced subalgebras of $U_h({\frak g})$, and one can define analogues of root vectors for them in a similar way. From now on we shall assume that the solution to equations (\ref{eqpi}) is fixed as above, $n_{ij}=\frac{1}{2d_j}c_{ij}$.

Denote by $U_q^s({\frak n}_+),U_q^s({\frak n}_-)$ and $U_q^s({\frak h})$ the subalgebras of $U_q^s({\frak g})$
generated by the
$e_i$, $f_i$ and by the $t_i$, respectively. Then the elements $e^{\bf r}=e_{\beta_1}^{r_1}\ldots e_{\beta_D}^{r_D},~~f^{\bf t}=f_{\beta_D}^{t_D}\ldots f_{\beta_1}^{t_1}$
and $t^{\bf s}=t_1^{s_1}\ldots t_l^{s_l}$, for ${\bf r}=(r_1,\ldots r_D),~{\bf t}=(t_1,\ldots t_D)\in {\Bbb N}^D$,
${\bf s}=(s_1,\ldots s_l)\in {\Bbb Z}^l$, form bases of $U_q^s({\frak n}_+),U_q^s({\frak n}_-)$ and $U_q^s({\frak h})$,
respectively, and the products $e^{\bf r}t^{\bf s}f^{\bf t}$ form a basis of
$U_q^s({\frak g})$ (see \cite{S10}, Section 7).

We shall also use quantum analogues of Borel subalgebras $U_q^s({\frak b}_+),U_q^s({\frak b}_-)$, $U_q^s({\frak b}_\pm)$ is the subalgebra in $U_q^s({\frak g})$ generated by $U_q^s({\frak n}_\pm)$ and by $U_q^s({\frak h})$, $U_q^s({\frak b}_\pm)=U_q^s({\frak n}_+)U_q^s({\frak h})$.
By specializing the above constructed basis for $q=\varepsilon$ we obtain a similar basis and similar subalgebras for $U_\varepsilon^s(\g)$.

Let $U_\mathcal{A}^{s}({\frak n}_+),U_\mathcal{A}^{s}({\frak n}_-)$ be the subalgebras of $U_\mathcal{A}^{s}({\frak g})$
generated by the
$e_i$ and by the $f_i$, $i=1,\ldots,l$, respectively.
Using the root vectors $e_{\beta}$ and $f_{\beta}$ we can construct a basis of
$U_\mathcal{A}^{s}({\frak g})$.
Namely, the elements $e^{\bf r}$, $f^{\bf t}$ for ${\bf r},~{\bf t}\in {\Bbb N}^N$
form bases of $U_\mathcal{A}^{s}({\frak n}_+),U_\mathcal{A}^{s}({\frak n}_-)$,
respectively.

The elements
$$
\left[ \begin{array}{l}
K_i;c \\
r
\end{array} \right]_{q_i}=\prod_{s=1}^r \frac{K_i q_i^{c+1-s}-K_i^{-1}q_i^{s-1-c}}{q_i^s-q_i^{-s}}~,~i=1,\ldots,l,~c\in \mathbb{Z},~r\in \mathbb{N}
$$
belong to $U_\mathcal{A}^{s}(\g)$. Denote by $U_\mathcal{A}^{s}(\h)$ the subalgebra of $U_\mathcal{A}^{s}(\g)$ generated by those elements and by $t_i^{\pm 1}$, $i=1,\ldots,l$.
Then multiplication defines an isomorphism of $\mathcal{A}$ modules:
$$
U_\mathcal{A}^{s}({\frak n}_-)\otimes U_\mathcal{A}^{s}({\frak h}) \otimes U_\mathcal{A}^{s}({\frak n}_+)\rightarrow U_\mathcal{A}^{s}({\frak g}).
$$

We shall also use the subalgebras $U_\mathcal{A}^s({\frak b}_\pm)\subset U_\mathcal{A}^{s}(\g)$ generated by $U_\mathcal{A}^s({\frak n}_\pm)$ and by $U_\mathcal{A}^s({\frak h})$.
A basis for $U_\mathcal{A}^{s}(\h)$ is a little bit more difficult to describe. We do not need its explicit description (see \cite{ChP}, Proposition 9.3.3 for details).

Finally we discuss an obvious analogue of the subalgebra $U_h^{s}({\frak m}_+)\subset U_h^{s}({\frak g})$ for $U_\mathcal{A}^{s}(\g)$.

Let $U_\mathcal{A}^{s}({\frak m}_+)\subset U_\mathcal{A}^{s}({\frak g})$ be the subalgebra generated by elements $e_\beta\in U_\mathcal{A}^{s}({\n_+})$, $\beta \in \Delta_{\m_+}$, where $\Delta_{\m_+}\subset \Delta$ is the ordered segment $\Delta_{\m_+}$.

By the results of \cite{S10}, Section 7 the elements
$e^{\bf r}=e_{\beta_1}^{r_1}\ldots e_{\beta_D}^{r_D}$, $r_i\in \mathbb{N}$, $i=1,\ldots, D$, and $r_i$ can be strictly positive only if $\beta_i\in \Delta_{\m_+}$, form a
basis of $U_\mathcal{A}^{s}({\frak m}_+)$.
Obviously $U_\mathcal{A}^{s}({\frak m}_+)/(q^{\frac{1}{2d}}-1)U_\mathcal{A}^{s}({\frak m}_+)\simeq U({\m_+})$, where ${\m_+}$ is the Lie subalgebra of $\g$ generated by the root vectors $X_\alpha$, $\alpha\in \Delta_{\m_+}$.
By specializing $q$ to a particular value $q=\varepsilon$ one can obtain a subalgebra $U_\varepsilon^{s}({\frak m}_+)\subset U_\varepsilon^{s}(\g)$ with similar properties.

The algebras $U_q^s(\g)$,$U_{\mathcal{A}}^s(\g)$ and $U_\varepsilon^s(\g)$ can be equipped with remarkable filtrations such that the associated graded algebras are almost commutative (see \cite{DK}). For ${\bf r},~{\bf t}\in {\Bbb N}^D$ define the height of the element
 $u_{{\bf r},{\bf t},t}=e^{\bf r}tf^{\bf t}$, $t\in U_q^s({\frak h})$ as follows ${\rm ht}~(u_{{\bf r},{\bf t},t})=\sum_{i=1}^D(t_i+r_i){\rm ht}~\beta_i\in \mathbb{N}$, where ${\rm ht}~\beta_i$ is the height of the root $\beta_i$. Introduce also the degree of $u_{{\bf r},{\bf t},t}$ by
 $$
 d(u_{{\bf r},{\bf t},t})=(r_1,\ldots,r_D,t_D,\ldots,t_1,{\rm ht}~(u_{{\bf r},{\bf t},t}))\in \mathbb{N}^{2D+1}.
 $$
 Equip $\mathbb{N}^{2D+1}$ with the total lexicographic order and denote by $(U_q^s(\g))_k$ the span of elements $u_{{\bf r},{\bf t},t}$ with $d(u_{{\bf r},{\bf t},t})\leq k$ in $U_q^s(\g)$. Then Proposition 1.7 in \cite{DK} implies that $(U_q^s(\g))_k$ is a filtration of $U_q^s(\g)$ such that the associated graded algebra is the associative algebra over $\mathbb{C}(q)$ with generators $e_\alpha,~f_\alpha$, $\alpha\in \Delta_+$, $t_i^{\pm 1}$, $i=1,\ldots l$ subject to the relations
 \begin{equation}\label{scomm}
 \begin{array}{l}
 t_it_j=t_jt_i,~~t_it_i^{-1}=t_i^{-1}t_i=1,~~ t_ie_\alpha t_i^{-1}=q^{\frac{H_i(\alpha)}{2d}}e_\alpha, ~~t_if_\alpha t_i^{-1}=q^{-\frac{H_i(\alpha)}{2d}}f_j,\\
\\
e_\alpha f_\beta =q^{({1+s \over 1-s}P_{{\h'}^*}\alpha,\beta)} f_\beta e_\alpha,\\
 \\
 e_{\alpha}e_{\beta} = q^{(\alpha,\beta)+({1+s \over 1-s}P_{{\h'}^*}\alpha,\beta)}e_{\beta}e_{\alpha},~ \alpha<\beta, \\
 \\
 f_{\alpha}f_{\beta} = q^{(\alpha,\beta)+({1+s \over 1-s}P_{{\h'}^*}\alpha,\beta)}f_{\beta}f_{\alpha},~ \alpha<\beta.
 \end{array}
 \end{equation}
 Such algebras are called semi--commutative. A similar result holds for the algebras $U_\varepsilon^s(\g)$ and $U_\mathcal{A}^{s}(\g)$.


\section{Quantized algebras of regular functions on Poisson--Lie groups and q-W algebras}\label{qplproups}

\setcounter{equation}{0}
\setcounter{theorem}{0}

First we recall some notions concerned with Poisson--Lie groups (see \cite{ChP}, \cite{Dm},
\cite{fact}, \cite{dual}). These facts will be used for the study of q-W algebras.

Let $G$ be a finite--dimensional Lie group equipped with a Poisson bracket,
$\frak g$ its Lie algebra. $G$ is called
a Poisson--Lie group if the multiplication $G\times G \rightarrow G$ is a
Poisson map.
A Poisson bracket satisfying this axiom is degenerate and, in particular, is
identically zero
at the unit element of the group. Linearizing this bracket at the unit element
defines the
structure of a Lie algebra in the space $T^*_eG\simeq {\frak g}^*$.
The pair (${\frak g},{\frak g}^{*})$ is called the tangent bialgebra of $G$.

Lie brackets in $\frak{g}$ and $\frak{g}^{*}$ satisfy the following
compatibility condition:

{\em Let }$\delta: {\frak g}\rightarrow {\frak g}\wedge {\frak g}$ {\em be
the dual  of the commutator map } $[,]_{*}: {\frak g}^{*}\wedge
{\frak g}^{*}\rightarrow {\frak g}^{*}$. {\em Then } $\delta$ {\em is a
1-cocycle on} $  {\frak g}$ {\em (with respect to the adjoint action
of } $\frak g$ {\em on} ${\frak g}\wedge{\frak g}$).

Let $c_{ij}^{k}, f^{ab}_{c}$ be the structure constants of
${\frak g}, {\frak g}^{*}$ with respect to the dual bases $\{e_{i}\},
\{e^{i}\}$ in ${\frak g},{\frak g}^{*}$. The compatibility condition
means that

$$
c_{ab}^{s} f^{ik}_{s} ~-~ c_{as}^{i} f^{sk}_{b} ~+~ c_{as}^{k}
f^{si}_{b} ~-~ c_{bs}^{k} f^{si}_{a} ~+~ c_{bs}^{i} f^{sk}_{a} ~~=
~~0.
$$
This condition is symmetric with respect to exchange of $c$ and
$f$. Thus if $({\frak g},{\frak g}^{*})$ is a Lie bialgebra, then
$({\frak g}^{*}, {\frak g})$ is also a Lie bialgebra.

The following proposition shows that the category of finite--dimensional Lie
bialgebras is isomorphic to
the category of finite--dimensional connected simply connected Poisson--Lie
groups.
\begin{proposition}{\bf (\cite{ChP}, Theorem 1.3.2)}
If $G$ is a connected simply connected finite--dimensional Lie group, every
bialgebra structure on $\frak g$
is the tangent bialgebra of a unique Poisson structure on $G$ which makes $G$
into a Poisson--Lie group.
\end{proposition}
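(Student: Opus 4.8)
The plan is to reduce the proposition to a standard integration theorem for cocycles, which is where the hypothesis that $G$ is simply connected enters. Recall that a bivector field $\pi$ on $G$ defines a Poisson--Lie structure if and only if it is \emph{multiplicative}, $\pi(gh)=(L_g)_*\pi(h)+(R_h)_*\pi(g)$, and \emph{Poisson}, i.e. its Schouten bracket satisfies $[\pi,\pi]=0$. Translating $\pi$ to the identity by right translations produces a map $\Delta_G:G\ra\g\wedge\g$, and multiplicativity of $\pi$ is equivalent to $\Delta_G$ being a group $1$--cocycle for the adjoint action, $\Delta_G(gh)=\Delta_G(g)+{\rm Ad}_g\,\Delta_G(h)$; conversely $\pi$ is recovered from $\Delta_G$ by right translation. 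Hence multiplicative bivectors on $G$ are in bijection with group $1$--cocycles $G\ra\g\wedge\g$, and differentiating $\Delta_G$ at $e$ gives a Lie algebra $1$--cocycle $\delta=d_e\Delta_G:\g\ra\g\wedge\g$, which is the cobracket of the tangent bialgebra. So the proposition breaks into three claims: (i) every Lie algebra $1$--cocycle $\delta$ integrates to a unique group $1$--cocycle $\Delta_G$ on a connected simply connected $G$; (ii) the multiplicative bivector $\pi$ built from $\Delta_G$ satisfies $[\pi,\pi]=0$ precisely when the dual bracket $[,]_*$ on $\g^*$ obeys the Jacobi identity; (iii) the resulting Poisson--Lie structure has tangent bialgebra $(\g,\g^*)$.

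For (i) I would regard $\delta$ as the linear map $x\mapsto(\delta(x),x)$ from $\g$ into the semidirect-product Lie algebra formed by the abelian ideal $\g\wedge\g$ (carrying the adjoint $\g$--action) and $\g$; the $1$--cocycle identity for $\delta$ — equivalently the compatibility condition displayed above — is exactly the statement that this map is a Lie algebra homomorphism splitting the projection onto $\g$. Since $G$ is connected and simply connected it integrates uniquely to a Lie group homomorphism from $G$ into the corresponding semidirect-product group that splits the projection onto $G$, and its $\g\wedge\g$--component is the sought cocycle $\Delta_G$; uniqueness is inherited from uniqueness of the integration, and this also yields the uniqueness asserted in the proposition once (iii) is known.

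I expect (ii) to be the main obstacle. The key point is that the Schouten bracket of two multiplicative multivector fields is again multiplicative, so $[\pi,\pi]$ is a multiplicative trivector field with $[\pi,\pi](e)=0$; on a \emph{connected} group such a field is determined by its linearization at $e$, so $[\pi,\pi]=0$ as soon as that linearization vanishes. A direct computation — done in coordinates trivialized at $e$, using the Leibniz rule for the Schouten bracket — identifies the linearization of $[\pi,\pi]$ at $e$ with the co-Jacobiator of $\delta$, i.e. with the map dual to the Jacobiator of $[,]_*$. By hypothesis $(\g,\g^*)$ is a Lie bialgebra, so $[,]_*$ satisfies the Jacobi identity and this co-Jacobiator vanishes. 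This bookkeeping with the Schouten bracket is the only genuinely computational step. Claim (iii) is then immediate: the linearization at $e$ of the multiplicative bivector $\pi$ is $\delta$ by construction, so the Lie bracket induced on $T^*_eG\cong\g^*$ by the Poisson structure is the dual of $\delta$, which is $[,]_*$.
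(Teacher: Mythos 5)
The paper states this proposition without proof, citing \cite{ChP}, Theorem 1.3.2, so there is no internal argument to compare against; your proposal reproduces the standard proof from that literature (integrate the Lie algebra $1$--cocycle to a group $1$--cocycle via the semidirect product and simple connectedness, then use that a multiplicative multivector field on a connected group vanishes iff its linearization at $e$ does, identifying the linearization of $[\pi,\pi]$ with the co-Jacobiator of $\delta$). Your sketch is correct, including the attribution of uniqueness to connectedness alone, so nothing needs to be added.
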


Let $G$ be a finite--dimensional Poisson--Lie group, $({\frak g},{\frak g}^{*})$
the tangent bialgebra of $G$.
The connected simply connected finite--dimensional
Poisson--Lie group corresponding to the Lie bialgebra $({\frak g}^{*}, {\frak
g})$ is called the dual
Poisson--Lie group and denoted by $G^*$.

$({\frak g},{\frak g}^{*})$ is called a factorizable Lie
bialgebra if the following conditions are satisfied (see \cite{Dm}, \cite{fact}):
\begin{enumerate}
\item
${\frak g}${\em \ is equipped with a non--degenerate invariant
scalar product} $\left( \cdot ,\cdot \right)$.

We shall always identify ${\frak g}^{*}$ and ${\frak g}$ by means of this
scalar product.

\item  {\em The dual Lie bracket on }${\frak g}^{*}\simeq {\frak g}${\em \
is given by}
\begin{equation}
\left[ X,Y\right] _{*}=\frac 12\left( \left[ rX,Y\right] +\left[ X,rY\right]
\right) ,X,Y\in {\frak g},  \label{rbr}
\end{equation}
{\em where }$r\in {\rm End}\ {\frak g}${\em \ is a skew symmetric linear
operator
(classical r-matrix).}

\item  $r${\em \ satisfies} {\em the} {\em modified classical Yang-Baxter
identity:}
\begin{equation}
\left[ rX,rY\right] -r\left( \left[ rX,Y\right] +\left[ X,rY\right] \right)
=-\left[ X,Y\right] ,\;X,Y\in {\frak g}{\bf .}  \label{cybe}
\end{equation}
\end{enumerate}

Define operators $r_\pm \in {\rm End}\ {\frak g}$ by
\[
r_{\pm }=\frac 12\left( r\pm id\right) .
\]
We shall need some properties of the operators $r_{\pm }$.
Denote by ${\frak b}_\pm$ and ${\frak n}_\mp$ the image and the kernel of the
operator
$r_\pm $:
\begin{equation}\label{bnpm}
{\frak b}_\pm = Im~r_\pm,~~{\frak n}_\mp = Ker~r_\pm.
\end{equation}

The classical Yang--Baxter equation implies that $r_{\pm }$ , regarded as a
mapping from
${\frak g}^{*}$ into ${\frak g}$, is a Lie algebra homomorphism.
Moreover, $r_{+}^{*}=-r_{-},$\ and $r_{+}-r_{-}=id.$

Put ${\frak {d}}={\frak g\oplus {\g}}$ (direct sum of two
copies). The mapping
\begin{eqnarray}\label{imbd}
{\frak {g}}^{*}\rightarrow {\frak {d}}~~~:X\mapsto (X_{+},~X_{-}),~~~X_{\pm
}~=~r_{\pm }X
\end{eqnarray}
is a Lie algebra embedding. Thus we may identify ${\frak g^{*}}$ with a Lie
subalgebra in ${\frak {d}}$.

Naturally, embedding (\ref{imbd}) extends to a homomorphism
$$
G^*\rightarrow G\times G,~~L\mapsto (L_+,L_-).
$$
We shall identify $G^*$ with the corresponding subgroup in $G\times G$.

There exists a unique right local Poisson group action
$$
G^*\times G\rightarrow G^*,~~((L_+,L_-),g)\mapsto g\circ (L_+,L_-),
$$
such that if $q:G^* \rightarrow G$ is the map defined by
$$
q(L_+,L_-)=L_-L_+^{-1}
$$
then
$$
q(g\circ (L_+,L_-))=g^{-1}L_-L_+^{-1}g.
$$
This action is called the dressing action of $G$ on $G^*$.

Let $\frak g$ be a finite--dimensional complex simple Lie algebra, $\h\subset \g$ its Cartan subalgebra. Let $s\in W$ be an element of the Weyl group $W$ of the pair $(\g,\h)$ and $\Delta_+$ the system of positive roots associated to $s$.
Observe that
cocycle (\ref{cocycles}) equips
$\frak g$ with
the structure of a factorizable Lie bialgebra, where the scalar product is given by the Killing form.
Using the identification
${\rm End}~{\frak g}\cong {\frak g}\otimes {\frak g}$ the corresponding
r--matrix may be represented as
$$
r^{s}=P_+-P_-+{1+s \over 1-s}P_{{\h'}},
$$
where $P_+,P_-$ and $P_{{\h'}}$ are the projection operators onto ${\frak n}_+,{\frak
n}_-$ and ${\frak h}'$ in
the direct sum
$$
{\frak g}={\frak n}_+ +{\frak h}'+{\h'}^\perp + {\frak n}_-,
$$
where ${\h'}^\perp$ is the orthogonal complement to $\h'$ in $\h$ with respect to the Killing form.

Let $G$ be the connected simply connected simple Poisson--Lie group with the
tangent Lie bialgebra $({\frak g},{\frak g}^*)$,
$G^*$ the dual group.

Observe that $G$ is an algebraic group (see \S 104,
Theorem 12 in \cite{Z}).

Note also that
$$
r^{s}_+=P_+ + {1 \over 1-s}P_{{\h'}}+\frac{1}{2}P_{{\h'}^\perp},~~r^{s}_-=-P_- + {s \over 1-s}P_{{\h'}}-\frac{1}{2}P_{{\h'}^\perp},
$$
and hence the subspaces ${\frak b}_\pm$ and ${\frak n}_\pm$ defined by
(\ref{bnpm}) coincide with
the Borel subalgebras in $\frak g$ and their nilradicals, respectively.
Therefore every element $(L_+,L_-)\in G^*$ may be uniquely written as
\begin{equation}\label{fact}
(L_+,L_-)=(h_+,h_-)(n_+,n_-),
\end{equation}
where $n_\pm \in N_\pm$, $h_+=exp(({1 \over 1-s}P_{{\h'}}+\frac{1}{2}P_{{\h'}^\perp})x),~h_-=exp(({s \over 1-s}P_{{\h'}}-\frac{1}{2}P_{{\h'}^\perp})x),~x\in
{\frak h}$.
In particular, $G^*$ is a solvable algebraic subgroup in $G\times G$.

For every algebraic variety $V$ we denote by ${\mathbb{C}}[V]$ the algebra of
regular functions on $V$.
Our main object will be the algebra of regular functions on $G^*$, ${\mathbb{C}}[G^*]$.
This algebra may be explicitly described as follows.
Let $\pi_V$ be a finite--dimensional representation of $G$. Then matrix
elements of $\pi_V(L_\pm)$ are well--defined functions on $G^*$, and ${\mathbb{C}}[G^*]$ is the subspace
in $C^\infty(G^*)$ generated by matrix elements of $\pi_V(L_\pm)$, where $V$
runs through all finite--dimensional
representations of $G$.
The elements $L^{\pm,V}=\pi_V(L_\pm)$ may be viewed as elements of the space
${\mathbb{C}}[G^*]\otimes {\rm End}V$.
\begin{proposition}{\bf (\cite{dual}, Section 2)}\label{pbff}
${\mathbb{C}}[G^*]$ is a Poisson--Hopf subalgebra in the Poisson algebra $C^\infty(G^*)$, the comultiplication and the antipode being induced by the multiplication and by taking inverse in $G^*$, respectively.
\end{proposition}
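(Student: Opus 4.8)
The plan is to reduce everything to general facts about Poisson--Lie groups, using two things already established above: that $G^*$ is a (solvable) algebraic subgroup of $G\times G$, and that cocycle (\ref{cocycles}) turns $(\g,\g^*)$ into a \emph{factorizable} Lie bialgebra with r--matrix $r^s=P_+-P_-+\frac{1+s}{1-s}P_{\h'}$.

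First I would identify $\mathbb{C}[G^*]$, as defined in the statement (the linear span of matrix elements of $\pi_V(L_\pm)$, $V$ running over finite--dimensional representations of $G$), with the algebra of regular functions on the algebraic group $G^*$. Since $G$ is reductive, matrix coefficients of finite--dimensional representations span $\mathbb{C}[G]$; hence matrix coefficients of $\pi_V(L_\pm)=\pi_V\circ\mathrm{pr}_\pm$ span the image of the restriction map $\mathbb{C}[G]\otimes\mathbb{C}[G]=\mathbb{C}[G\times G]\to\mathbb{C}[G^*]$, which is surjective because $G^*\subset G\times G$ is closed. In particular $\mathbb{C}[G^*]$ is a subalgebra of $C^\infty(G^*)$, and it carries the commutative Hopf algebra structure dual to the group structure of $G^*$: comultiplication $\Delta f(x,y)=f(xy)$, antipode $Sf(x)=f(x^{-1})$, counit $\varepsilon f=f(e)$. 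The comultiplication takes values in $\mathbb{C}[G^*\times G^*]=\mathbb{C}[G^*]\otimes\mathbb{C}[G^*]$ because $G^*$ is affine, and on the generators $L^{\pm,V}$ it is matrix multiplication, with $S(L^{\pm,V})=(L^{\pm,V})^{-1}$; these are exactly the maps induced by multiplication and inversion in $G^*$.

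Next I would show that $\mathbb{C}[G^*]$ is closed under the Poisson bracket of $C^\infty(G^*)$. The Poisson structure on $G^*$ is the Poisson--Lie structure with tangent bialgebra $(\g^*,\g)$; realizing $G^*$ inside $G\times G$ and using the factorization (\ref{fact}) one computes the brackets among the entries of $L^{+,V}$ and $L^{-,W}$ in the familiar Sklyanin form --- $\{L^{+,V}_1,L^{+,W}_2\}$, $\{L^{-,V}_1,L^{-,W}_2\}$ and $\{L^{+,V}_1,L^{-,W}_2\}$ are bilinear in one entry of $L^{\pm,V}$ and one entry of $L^{\pm,W}$, with coefficients obtained by applying $\pi_V\otimes\pi_W$ to $r^s$ and to $r^s_\pm$. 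The modified classical Yang--Baxter identity (\ref{cybe}) for $r^s$ is exactly what makes these formulas define a Poisson bracket, and it shows simultaneously that the subalgebra generated by the matrix entries is stable under the bracket. This is the computational core of the argument; the only point needing extra care relative to the standard case is the non--standard Cartan term $\frac{1+s}{1-s}P_{\h'}$, but since $r^s$ is skew and satisfies (\ref{cybe}) its presence only modifies the explicit structure constants, not the closedness.

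Finally I would check the compatibility. That $\Delta$ is a Poisson map from $\mathbb{C}[G^*]$ to $\mathbb{C}[G^*]\otimes\mathbb{C}[G^*]$ equipped with the product Poisson structure is precisely the assertion that multiplication $G^*\times G^*\to G^*$ is Poisson, i.e.\ that $G^*$ is a Poisson--Lie group, which holds because $(\g^*,\g)$ is a Lie bialgebra; that $S$ reverses the Poisson bracket is the standard fact that group inversion on a Poisson--Lie group is anti--Poisson; and $\varepsilon(\{f,g\})=0$ because the Poisson bivector of a Poisson--Lie group vanishes at the unit. Together these say $\mathbb{C}[G^*]$ is a Poisson--Hopf subalgebra of $C^\infty(G^*)$. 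I expect the main obstacle to be the middle step --- writing down the explicit quadratic brackets and verifying closedness with the extra Cartan part of $r^s$ present; the rest is formal once $G^*$ is known to be an affine algebraic Poisson--Lie group.
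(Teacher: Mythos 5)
Your proposal is correct and follows essentially the same route as the source: the paper states Proposition \ref{pbff} without proof, importing it from \cite{dual}, Section 2, and your sketch (identify $\mathbb{C}[G^*]$ with the regular functions on the affine algebraic group $G^*\subset G\times G$, note the Hopf structure dual to the group law, verify Poisson closedness via the quadratic Sklyanin-type brackets of the matrix entries of $L^{\pm,V}$ governed by $r^s$ and $r^s_\pm$, and get compatibility from the Poisson--Lie property of $G^*$) is precisely the standard argument given there. The only loose phrase is attributing the closedness to the modified Yang--Baxter identity; what it really guarantees is that (\ref{rbr}) is a Lie bracket so the dual Poisson--Lie structure exists, while stability of the span of matrix entries comes from the explicit quadratic form of the brackets, which you correctly identify as the computational core.
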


Now we construct a quantization of the Poisson--Hopf algebra ${\mathbb{C}}[G^*]$.
For technical reasons we shall need an extension of the algebra $U_\mathcal{A}^{s}({\frak g})$ to an algebra $U_{\mathcal{A}'}^{s}({\frak g})=U_\mathcal{A}^{s}({\frak g})\otimes_\mathcal{A}\mathcal{A}'$, where
$\mathcal{A}'=\mathbb{C}[q^{\frac{1}{2d}}, q^{-\frac{1}{2d}}, \frac{1}{[2]_{q_i}},\ldots ,\frac{1}{[r]_{q_i}}, \frac{1-q^{\frac{1}{2d}}}{1-q_i^{-2}}]_{i=1,\ldots, l}$. Note that the ratios $\frac{1-q^{\frac{1}{2d}}}{1-q_i^{-2}}$ have no singularities when $q=1$, and we can define a localization,
$\mathcal{A}'/(1-q^{\frac{1}{2d}})\mathcal{A}'=\mathbb{C}$ as well as similar localizations for other generic values of $\varepsilon$, $\mathcal{A}'/(\varepsilon^{\frac{1}{2d}}-q^{\frac{1}{2d}})\mathcal{A}'=\mathbb{C}$ and the corresponding localizations of algebras over $\mathcal{A}'$. $U_{\mathcal{A}'}^{s}({\frak g})$ is naturally a Hopf algebra with the comultiplication and the antipode induced from $U_{\mathcal{A}}^{s}({\frak g})$.

First, by the results of Section 10 in \cite{S10} for any finite--dimensional $U_\mathcal{A}^{s}({\frak g})$--module $V$ the invertible elements
${^q{L^{\pm,V}}}$ given by
$$
{^q{L^{+,V}}}=(id\otimes \pi_V){{\mathcal R}_{21}^{s}}^{-1}=(id\otimes
\pi_VS^{s}){\mathcal R}_{21}^{s}
,~~ {^q{L^{-,V}}}=(id\otimes \pi_V){\mathcal R}^{s}.
$$
are well--defined elements of $U_\mathcal{A}^{s}({\frak g})\otimes {\rm End}V$.
If we fix a basis in $V$, ${^q{L^{\pm,V}}}$ may be regarded as matrices
with matrix
elements $({^q{L^{\pm,V}}})_{ij}$ being elements of $U_\mathcal{A}^{s}({\frak g})$.

We denote by ${\mathbb{C}}_{\mathcal{A}'}[G^*]$ the Hopf subalgebra in $U_{\mathcal{A}'}^{s}({\frak
g})$ generated by matrix elements
of $({^q{L^{\pm,V}}})^{\pm 1}$, where $V$ runs through all finite--dimensional
representations of $U_\mathcal{A}^{s}({\frak
g})$.

The quotient algebra ${\mathbb{C}}_{\mathcal{A}'}[G^*]/(q^{\frac{1}{2d}}-1){\mathbb{C}}_{\mathcal{A}'}[G^*]$ is commutative (see e.g. \cite{S10}, Section 10), and one can equip it with a Poisson structure given by
\begin{equation}\label{quasipb}
\{x_1,x_2\}=\frac{1}{2d}{[a_1,a_2] \over q^{\frac{1}{2d}}-1}~(\mbox{mod }(q^{\frac{1}{2d}}-1)),
\end{equation}
where $a_1,a_2\in {\mathbb{C}}_{\mathcal{A}'}[G^*]$ reduce to
$x_1,x_2\in {\mathbb{C}}_{\mathcal{A}'}[G^*]/(q^{\frac{1}{2d}}-1){\mathbb{C}}_{\mathcal{A}'}[G^*]~(\mbox{mod }(q^{\frac{1}{2d}}-1))$.
Obviously, the the comultiplication and the antipode on ${\mathbb{C}}_{\mathcal{A}'}[G^*]$ induce a comultiplication and an antipode on ${\mathbb{C}}_{\mathcal{A}'}[G^*]/(q^{\frac{1}{2d}}-1){\mathbb{C}}_{\mathcal{A}'}[G^*]$ compatible with the introduced Poisson structure, and the quotient ${\mathbb{C}}_{\mathcal{A}'}[G^*]/(q^{\frac{1}{2d}}-1){\mathbb{C}}_{\mathcal{A}'}[G^*]$ becomes a Poisson--Hopf algebra.

\begin{proposition}{\bf (\cite{S10}, Proposition 10.2)}\label{quantreg}
The Poisson--Hopf algebra ${\mathbb{C}}_{\mathcal{A}'}[G^*]/(q^{\frac{1}{2d}}-1){\mathbb{C}}_{\mathcal{A}'}[G^*]$ is isomorphic to ${\mathbb{C}}[G^*]$ as a Poisson--Hopf
algebra.
\end{proposition}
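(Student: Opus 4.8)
The plan is to argue in the spirit of the quantum duality principle: realise $\mathbb{C}_{\mathcal{A}'}[G^*]$ as a flat $\mathcal{A}'$-deformation whose fibre at $q^{\frac1{2d}}=1$ is $\mathbb{C}[G^*]$, and then check that the bracket obtained from $\frac1{2d}(q^{\frac1{2d}}-1)^{-1}[\cdot,\cdot]$ is the Poisson--Lie bracket of $G^*$.

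The first and main step is to analyse the matrix entries of the $R$-matrix functor $V\mapsto {}^qL^{\pm,V}$. Using the explicit product expressions (\ref{rmatrspi})--(\ref{rmatrspi'}) for $\mathcal{R}^s$ and the triangular decomposition of $U_q^s(\g)$ recalled in Section \ref{wqreal}, one expands every entry of ${}^qL^{\pm,V}$ as an $\mathcal{A}'$-combination of monomials $e^{\mathbf{r}}t^{\mathbf{s}}f^{\mathbf{t}}$. The crucial point is that in $\mathbb{C}_{\mathcal{A}'}[G^*]$ every occurrence of a root vector $e_\beta$ or $f_\beta$ is accompanied by a factor $1-q_\beta^{-2}$, and in $\mathcal{A}'$ this is an associate of $q^{\frac1{2d}}-1$, since the generator $\frac{1-q^{1/2d}}{1-q_i^{-2}}$ of $\mathcal{A}'$ is a unit of $\mathcal{A}'$. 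Hence, although such a ``unipotent'' entry lies in $(q^{\frac1{2d}}-1)\,U^s_{\mathcal{A}'}(\g)$, it does \emph{not} lie in $(q^{\frac1{2d}}-1)\,\mathbb{C}_{\mathcal{A}'}[G^*]$ --- dividing by $q^{\frac1{2d}}-1$ would produce a bare root-vector monomial, which is not one of the generators of $\mathbb{C}_{\mathcal{A}'}[G^*]$ --- so these entries survive \emph{non-trivially} in the quotient $\mathbb{C}_{\mathcal{A}'}[G^*]/(q^{\frac1{2d}}-1)\mathbb{C}_{\mathcal{A}'}[G^*]$. I would make this quantitative by exhibiting an explicit $\mathcal{A}'$-basis of $\mathbb{C}_{\mathcal{A}'}[G^*]$ consisting of products of the abelian entries $t^{\mathbf{s}}$ with monomials in the unipotent entries of the ${}^qL^{\pm,V}$, and checking that under the factorization (\ref{fact}) of $G^*\subset G\times G$ --- abelian entries corresponding to coordinates on the torus factor, unipotent ones to coordinates on $N_\pm$ --- this basis maps to a monomial basis of $\mathbb{C}[G^*]$. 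This shows simultaneously that $\mathbb{C}_{\mathcal{A}'}[G^*]$ is $\mathcal{A}'$-free, hence flat, and that it yields a $\mathbb{C}$-linear isomorphism $\phi:\mathbb{C}_{\mathcal{A}'}[G^*]/(q^{\frac1{2d}}-1)\mathbb{C}_{\mathcal{A}'}[G^*]\to\mathbb{C}[G^*]$ sending each entry of ${}^qL^{\pm,V}$ to the corresponding matrix element of $\pi_V(L_\pm)$.

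That $\phi$ is an isomorphism of Hopf algebras is then formal. The ${}^qL^{\pm,V}$ satisfy quadratic exchange ($RLL$-type) relations of the form $\mathcal{R}^s_{VW}\,{}^qL^{\epsilon,V}_1\,{}^qL^{\epsilon',W}_2 = {}^qL^{\epsilon',W}_2\,{}^qL^{\epsilon,V}_1\,\mathcal{R}^s_{VW}$, with $\mathcal{R}^s_{VW}=(\pi_V\otimes\pi_W)\mathcal{R}^s$ and $L_1=L\otimes1$, $L_2=1\otimes L$, which follow from quasitriangularity (\ref{quasitr}) of $U_h^s(\g)$; at $q^{\frac1{2d}}=1$ these reduce to exactly the relations expressing that the $\pi_V(L_\pm)$ are matrices of regular functions on the subgroup $G^*\subset G\times G$ with factorization (\ref{fact}), so $\phi$ is an algebra map, and it is surjective since such matrix elements generate $\mathbb{C}[G^*]$ (Proposition \ref{pbff}); the comultiplications and antipodes agree on generators because on both sides they are induced by multiplication and inversion in $G^*$ (Proposition \ref{pbff}). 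For the Poisson structures one expands the same exchange relations to first order in $q^{\frac1{2d}}-1$: since $\frac1{2d}(q^{\frac1{2d}}-1)^{-1}(\mathcal{R}^s-1\otimes1)$ specializes to a constant multiple of $r^s_+=P_++\frac1{1-s}P_{\h'}+\frac12 P_{{\h'}^\perp}$, the bracket defined by (\ref{quasipb}) comes out to be precisely the Sklyanin-type bracket among the entries of $L_\pm$ determined by the classical $r$-matrix $r^s=P_+-P_-+\frac{1+s}{1-s}P_{\h'}$ of the factorizable structure (\ref{cocycles}) --- that is, the Poisson bracket of Proposition \ref{pbff} --- and by the Leibniz rule this fixes the bracket on all of $\mathbb{C}[G^*]$. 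Hence $\phi$ is an isomorphism of Poisson--Hopf algebras.

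I expect the hard part to be the flatness/fibre statement of the second paragraph: proving that $\mathbb{C}_{\mathcal{A}'}[G^*]$ is $\mathcal{A}'$-free with a basis specializing to a basis of $\mathbb{C}[G^*]$ --- equivalently, that the fibre at $q^{\frac1{2d}}=1$ is neither too large (no defining relation degenerates) nor too small (every unipotent entry survives, as above). This forces one to control the exchange relations over the integral form $U^s_{\mathcal{A}}(\g)$, on which $\mathcal{R}^s$ itself is not defined since the integral forms are not quasitriangular, and to keep careful track of the factors $1-q_\beta^{-2}$ against $q^{\frac1{2d}}-1$ throughout; this is the content of Section 10 of \cite{S10}. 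Everything else is a formal consequence of quasitriangularity of $U_h^s(\g)$ together with a first-order expansion in $q^{\frac1{2d}}-1$.
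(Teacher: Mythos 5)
The paper gives no proof of this statement: it is quoted verbatim from \cite{S10}, Proposition 10.2, and the actual argument lives in Section 10 of that reference. Your sketch is essentially that argument — the quantum duality principle, with the rescaled generators $\tilde e_\beta=(1-q_\beta^{-2})e_\beta$, $\tilde f_\beta$ and the units $\frac{1-q^{1/2d}}{1-q_i^{-2}}$ of $\mathcal{A}'$ doing exactly the work you describe, the PBW basis of $U^s_{\mathcal{A}'}(\g)$ giving $\mathcal{A}'$-freeness and the identification of the fibre via the factorization (\ref{fact}), and the first-order expansion recovering the Sklyanin bracket for $r^s$ — and you correctly locate the genuinely technical step (controlling the exchange relations over the integral form, cf.\ (\ref{cmrel})) where it belongs.
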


We shall call the map $p:{\mathbb{C}}_{\mathcal{A}'}[G^*] \rightarrow {\mathbb{C}}_{\mathcal{A}'}[G^*]/(q^{\frac{1}{2d}}-1){\mathbb{C}}_{\mathcal{A}'}[G^*]={\mathbb{C}}[G^*]$ the
quasiclassical limit.

From the definition of the elements ${^q{L^{\pm,V}}}$ it follows that ${\mathbb{C}}_{\mathcal{A}'}[G^*]$ is the subalgebra in $U_{\mathcal{A}'}^{s}({\frak g})$ generated by the elements $\prod_{j=1}^lt_j^{\pm 2dp_{ij}},~\prod_{j=1}^lt_j^{\pm 2dp_{ji}},~i=1,\ldots l,~\tilde e_{\beta}=(1-q_\beta^{-2})e_{\beta},~\tilde f_{\beta}=(1-q_\beta^{-2})e^{h\beta^\vee}f_{\beta},~\beta\in \Delta_+$.

Now using the Hopf algebra ${\mathbb{C}}_{\mathcal{A}'}[G^*]$ we shall define quantum
versions of W--algebras.
From the definition of the elements ${^q{L^{\pm,V}}}$ it follows that the matrix elements of ${^q{L^{\pm,V}}}^{\pm 1}$ form Hopf subalgebras ${\mathbb{C}}_{\mathcal{A}'}[B_\pm]\subset {\mathbb{C}}_{\mathcal{A}'}[G^*]$, and that
${\mathbb{C}}_{\mathcal{A}'}[G^*]$ contains the subalgebra ${\mathbb{C}}_{\mathcal{A}'}[N_-]$
generated by elements
$\tilde e_{\beta}=(1-q_\beta^{-2})e_{\beta}$, $\beta\in \Delta_+$.

Suppose that the positive root system $\Delta_+$ and its ordering are associated to $s$.
Denote by ${\mathbb{C}}_{\mathcal{A}'}[M_-]$ the subalgebra in ${\mathbb{C}}_{\mathcal{A}'}[N_-]$
generated by elements
$\tilde e_{\beta}$, $\beta\in {\Delta_{\m_+}}$.

Note that one van consider $\n_-$ and $\m_\pm$ as Lie subalgebras in $\g^*$ via imbeddings
$$
\n_-\rightarrow \g^*\subset \g\oplus \g,~x\mapsto (0,x),
$$
$$
\m_+\rightarrow \g^*\subset \g\oplus \g,~x\mapsto (x,0),
$$
$$
\m_-\rightarrow \g^*\subset \g\oplus \g,~x\mapsto (0,x),
$$
By construction ${\mathbb{C}}_{\mathcal{A}'}[N_-]$ is a quantization of the algebra of regular functions on the algebraic subgroup $N_-\subset G^*$ corresponding to the Lie subalgebra $\n_- \subset \g^*$, and ${\mathbb{C}}_{\mathcal{A}'}[M_-]$ is a quantization of the algebra of regular functions on the algebraic subgroup $M_-\subset G^*$ corresponding to the Lie subalgebra $\m_- \subset \g^*$ in the sense that $p({\mathbb{C}}_{\mathcal{A}'}[N_-])={\mathbb{C}}[N_-]$ and $p({\mathbb{C}}_{\mathcal{A}'}[M_-])={\mathbb{C}}[M_-]$. We also denote by $M_+$ the algebraic subgroup $M_+\subset G^*$ corresponding to the Lie subalgebra $\m_+ \subset \g^*$.

The following proposition gives the most important property of the subalgebra ${\mathbb{C}}_{\mathcal{A}'}[M_-]$ which plays the key role in the definition of q-W--algebras.
\begin{proposition}{\bf (\cite{S10}, Section 10)}\label{Qdefr}
The defining relations in the subalgebra ${\mathbb{C}}_{\mathcal{A}'}[M_-]$ for the generators $\tilde{e}_\beta=(1-q_\beta^{-2})e_\beta$, $\beta\in \Delta_{\m_+}$ are of the form
\begin{equation}\label{cmrel}
\tilde{e}_{\alpha}\tilde{e}_{\beta} - q^{(\alpha,\beta)+({1+s \over 1-s}P_{{\h'}^*}\alpha,\beta)}\tilde{e}_{\beta}\tilde{e}_{\alpha}= \sum_{\alpha<\delta_1<\ldots<\delta_n<\beta}C''(k_1,\ldots,k_n)
{\tilde{e}_{\delta_1}}^{k_1}{\tilde{e}_{\delta_2}}^{k_2}\ldots {\tilde{e}_{\delta_n}}^{k_n},~\alpha<\beta,
\end{equation}
where $C''(k_1,\ldots,k_n)\in \mathcal{A}'$ has a zero of order 1, as a function of $q$, at point $q=1$, and for any $k_i\in \mathcal{A}'$, $i=1,\ldots ,l'$ the map $\chi_q^{s}:{\mathbb{C}}_{\mathcal{A}'}[M_-]\rightarrow \mathcal{A}'$,
\begin{equation}\label{charq}
\chi_q^s(\tilde e_\beta)=\left\{ \begin{array}{ll} 0 & \beta \not \in \{\gamma_1, \ldots, \gamma_{l'}\} \\ k_i & \beta=\gamma_i
\end{array}
\right  .,
\end{equation}
is a character of ${\mathbb{C}}_{\mathcal{A}'}[M_-]$ vanishing on the r.h.s. and on the l.h.s. of relations (\ref{cmrel}).
\end{proposition}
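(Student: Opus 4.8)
The plan is to obtain the relations $(\ref{cmrel})$ by transporting the Levendorskii--Soibelman straightening relations from $U_h(\g)$ to $U_h^s(\g)$, rescaling, and specializing, and then to verify by a direct computation that $(\ref{charq})$ annihilates these relations. First I would start from $(\ref{qcom})$ for the root vectors $X_\beta^+$ of $U_h(\g)$ attached to the normal ordering of $\Delta_+$ associated to $s$. By Proposition $\ref{rootss}$ the root vectors of $U_h^s(\g)$ are $e_\beta=\psi_{\{n\}}^{-1}(X_\beta^+e^{hK\beta^\vee})$, so applying the algebra isomorphism $\psi_{\{n\}}^{-1}$ to $(\ref{qcom})$ and moving the Cartan factors $e^{hK\beta^\vee}$ past the root vectors --- which only contributes powers of $q$, and whose product cancels between the two sides since $\sum_i k_i\delta_i=\alpha+\beta$ --- produces relations of the same triangular shape for the $e_\beta$, with $q^{(\alpha,\beta)}$ replaced by $q^{(\alpha,\beta)+({1+s\over1-s}P_{\h'^*}\alpha,\beta)}$; the new exponent is exactly the contribution of the operator $K$ of $(\ref{Kdef})$ for the fixed solution $n_{ij}={1\over2d_j}c_{ij}$, and it matches the semi--commutative relations $(\ref{scomm})$ of the associated graded algebra. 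Then I would specialize to $U_{\mathcal A'}^s(\g)$, substitute $e_\beta=(1-q_\beta^{-2})^{-1}\tilde e_\beta$ and $(e_\beta)^{(k)}=\tilde e_\beta^k/((1-q_\beta^{-2})^k[k]_{q_\beta}!)$, and multiply the relation by $(1-q_\alpha^{-2})(1-q_\beta^{-2})$, which gives $(\ref{cmrel})$ with
$$
C''(k_1,\dots,k_n)=C'(k_1,\dots,k_n)\,\frac{(1-q_\alpha^{-2})(1-q_\beta^{-2})}{\prod_{j=1}^n(1-q_{\delta_j}^{-2})^{k_j}\,[k_j]_{q_{\delta_j}}!},
$$
where $C'$ differs from the polynomial $C\in\mathbb{C}[q,q^{-1}]$ of $(\ref{qcom})$ by an invertible power of $q$. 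Because the normal ordering attached to $s$ makes $\Delta_{\m_+}$ an interval, $\alpha,\beta\in\Delta_{\m_+}$ and $\alpha<\delta<\beta$ force $\delta\in\Delta_{\m_+}$, so the right-hand side of $(\ref{cmrel})$ lies in $\mathbb{C}_{\mathcal A'}[M_-]$; hence these are genuine relations of $\mathbb{C}_{\mathcal A'}[M_-]$, and they form a complete set because they reduce an arbitrary ordered product of the $\tilde e_\beta$, $\beta\in\Delta_{\m_+}$, to the PBW normal form $e^{\bf r}$ recalled before the statement.

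Next I would track the order of vanishing at $q=1$. Each $1-q_\gamma^{-2}=1-q^{-2d_\gamma}$ has a simple zero there, while $[k_j]_{q_{\delta_j}}!$ is a unit of $\mathcal A'$ whose value at $q=1$ is $k_j!\neq0$; so ${\rm ord}_{q=1}C''={\rm ord}_{q=1}C'+2-\sum_j k_j$. Divisibility of $C''$ by $q^{\frac1{2d}}-1$ (order $\geq1$) is automatic: the quotient $\mathbb{C}_{\mathcal A'}[M_-]/(q^{\frac1{2d}}-1)$ is commutative, so at $q=1$ the left-hand side of $(\ref{cmrel})$ vanishes, and since the monomials $\tilde e_{\delta_1}^{k_1}\cdots\tilde e_{\delta_n}^{k_n}$ on the right are part of a PBW basis, each $C''$ must vanish at $q=1$. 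To pin the order down to exactly $1$ I would invoke that $\mathbb{C}_{\mathcal A'}[M_-]$ is a flat quantization of $\mathbb{C}[M_-]$ (Proposition $\ref{quantreg}$) and identify the leading $q\to1$ term of $C'$ coming from the Levendorskii--Soibelman recursion with the nonzero Poisson bracket $(\ref{quasipb})$ of the corresponding coordinate functions on $M_-$.

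Finally, to see that $\chi_q^s$ is a character I would check that it kills each relation $(\ref{cmrel})$. Applying $\chi_q^s$ to the left-hand side and using commutativity of $\mathcal A'$ gives $(1-q^{c_{\alpha\beta}})\chi_q^s(\tilde e_\alpha)\chi_q^s(\tilde e_\beta)$ with $c_{\alpha\beta}=(\alpha,\beta)+({1+s\over1-s}P_{\h'^*}\alpha,\beta)$; this vanishes unless both $\alpha$ and $\beta$ are among $\gamma_1,\dots,\gamma_{l'}$, in which case $\alpha=\gamma_i$, $\beta=\gamma_j$ with $i\neq j$, $P_{\h'^*}\gamma_i=\gamma_i$, and $c_{\gamma_i\gamma_j}=2((1-s)^{-1}\gamma_i,\gamma_j)$, which is $0$ by the orthogonality properties of the roots $\gamma_1,\dots,\gamma_{l'}$ in the Carter decomposition $s=s^1s^2$ of Proposition $\ref{pord}$ together with the skew-symmetry of the Cayley transform ${1+s\over1-s}P_{\h'}$. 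For the right-hand side, $\chi_q^s$ annihilates every monomial $\tilde e_{\delta_1}^{k_1}\cdots\tilde e_{\delta_n}^{k_n}$ containing some $\tilde e_\delta$ with $\delta\notin\{\gamma_1,\dots,\gamma_{l'}\}$, and a monomial supported only on the $\tilde e_{\gamma_i}$ cannot occur with nonzero coefficient on the right of $(\ref{cmrel})$: by weight conservation it would satisfy $\sum_i k_i\gamma_{m_i}=\alpha+\beta$ with the strict chain $\alpha<\gamma_{m_1}<\dots<\gamma_{m_p}<\beta$, which is excluded by the linear independence of the $\gamma_i$ and their position inside the normally ordered segment $\Delta_{\m_+}$ (Proposition $\ref{pord}$). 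Hence $\chi_q^s$ extends to an algebra homomorphism $\mathbb{C}_{\mathcal A'}[M_-]\to\mathcal A'$.

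The hard part will be this last step: both the vanishing $c_{\gamma_i\gamma_j}=0$ for $i\neq j$ and the impossibility of a monomial supported on $\{\tilde e_{\gamma_1},\dots,\tilde e_{\gamma_{l'}}\}$ appearing on the right of $(\ref{cmrel})$ rest on the fine combinatorics of the normally ordered root system associated to $s$ from \cite{S10}; the same delicacy affects the ``order exactly $1$'' clause in the middle step, which requires the leading coefficients in the Levendorskii--Soibelman recursion rather than just divisibility by $q^{\frac1{2d}}-1$.
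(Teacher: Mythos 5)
The paper itself does not prove this proposition --- it is imported verbatim from \cite{S10}, Section 10 --- so there is no internal argument to compare against; your proposal must stand on its own. Its skeleton is the right one (transport the straightening relations (\ref{qcom}) through $\psi_{\{n\}}$, rescale by the factors $1-q_\beta^{-2}$, track vanishing at $q=1$, then test $\chi_q^s$ against the relations), and you correctly locate where the difficulty sits. But the two claims on which the character verification rests are not closed by the justifications you offer. The vanishing of $c_{\gamma_i\gamma_j}=(\gamma_i,\gamma_j)+({1+s \over 1-s }P_{{\h'}^*}\gamma_i,\gamma_j)$ for $i<j$ cannot follow from ``orthogonality plus skew-symmetry'': skew-symmetry of the Cayley transform gives $c_{\gamma_i\gamma_j}+c_{\gamma_j\gamma_i}=2(\gamma_i,\gamma_j)$, and $(\gamma_i,\gamma_j)\neq 0$ in general when the two roots come from the two different involutions $s^1,s^2$ (for a Coxeter element of $A_2$ one finds $c_{12}=0$ but $c_{21}=-2$). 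Only the \emph{ordered} coefficient vanishes, and the fact actually needed is the identity $({1+s \over 1-s }P_{{\h'}^*}\gamma_i,\gamma_j)=-(\gamma_i,\gamma_j)$ for $i<j$, a specific property of the Cayley transform of $s=s^1s^2$ established in \cite{S10}, not a formal consequence of the properties you invoke.

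Likewise, your exclusion of right-hand-side monomials supported on $\{\tilde e_{\gamma_1},\ldots,\tilde e_{\gamma_{l'}}\}$ via linear independence only treats the case where $\alpha$ and $\beta$ are both among the $\gamma$'s; when, say, $\alpha\notin\{\gamma_1,\ldots,\gamma_{l'}\}$, nothing you say forbids $\alpha+\beta=\sum_m k_m\gamma_m$ with every $\gamma_m$ strictly between $\alpha$ and $\beta$ (for instance $\alpha+\beta=\gamma_m$ for a single intermediate $\gamma_m$), and then $\chi_q^s$ would not annihilate the right-hand side. Closing this again requires the fine structure of the segment $\Delta_{\m_+}$ from \cite{S10}. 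Finally, your derivation of $C''\in\mathcal{A}'$ with a zero at $q=1$ is mildly circular as written: expanding the left-hand side in the ordered $\tilde e$-monomials with coefficients in $\mathcal{A}'$ presupposes that these monomials form an $\mathcal{A}'$-basis of $\mathbb{C}_{\mathcal{A}'}[M_-]$, which is exactly what the straightening relations are meant to establish (unless one first imports the $\mathcal{A}'$-freeness of $\mathbb{C}_{\mathcal{A}'}[N_-]$ from the flatness statement behind Proposition \ref{quantreg}); and your own formula ${\rm ord}_{q=1}C''={\rm ord}_{q=1}C'+2-\sum_j k_j$ shows that mere membership of $C''$ in $\mathcal{A}'$ for monomials with $\sum_j k_j\geq 3$ already needs the quantitative bound ${\rm ord}_{q=1}C'\geq\sum_j k_j-2$ on the coefficients of (\ref{qcom}), which the commutativity-at-$q=1$ argument does not supply.
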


Denote by $\mathbb{C}_{\chi_q^{s}}$ the rank one representation of the algebra ${\mathbb{C}}_{\mathcal{A}'}[M_-]$ defined by the character $\chi_q^{s}$.

For any finite--dimensional $U_\mathcal{A}^{s}({\frak g})$--module $V$ let ${^q{L^{V}}}={^q{L^{-,V}}}{^q{L^{+,V}}}^{-1}=(id\otimes \pi_V){\mathcal R}^{s}{{\mathcal R}_{21}^{s}}$.
Let ${\mathbb{C}}_{\mathcal{A}'}[{G}_*]$ be the $\mathcal{A}'$--subalgebra in ${\mathbb{C}}_{\mathcal{A}'}[G^*]$ generated by the matrix entries of ${^q{{L}^{V}}}$, where $V$ runs over all finite--dimensional representations of $U_\mathcal{A}^{s}({\frak g})$.

Define the right adjoint action of $U_{\mathcal{A}'}^s(\g)$ on $U_{\mathcal{A}'}^s(\g)$ by the formula
\begin{equation}\label{ad}
{\rm Ad}x(w)=S_s^{-1}(x_2)wx_1,
\end{equation}
where we use the abbreviated notation for the coproduct $\Delta_s(x)=x_1\otimes x_2$, $x\in U_{\mathcal{A}'}^s(\g)$, $w\in U_{\mathcal{A}'}^s(\g)$.

Note that by Lemma 2.2 in \cite{JL}
\begin{equation}\label{adm}
{\rm Ad}x(wz)={\rm Ad}x_2(w){\rm Ad}x_1(z).
\end{equation}

Observe also that by definition the adjoint action introduced above is dual to a restriction of the dressing coaction of the quantization of the algebra of regular functions on the Poisson-Lie group $G$ on the space ${\mathbb{C}}_{\mathcal{A}'}[G^*]$. Therefore the subspace ${\mathbb{C}}_{\mathcal{A}'}[G^*]\subset U_{\mathcal{A}'}^s(\g)$ is stable under the adjoint action. The subalgebra ${\mathbb{C}}_{\mathcal{A}'}[{G}_*]\subset {\mathbb{C}}_{\mathcal{A}'}[G^*]$ is also stable under the dressing coaction (see \cite{dual}, Section 3), and hence ${\mathbb{C}}_{\mathcal{A}'}[{G}_*]$ is stable under the adjoint action.

\begin{proposition}\label{locfin}
Let $\varepsilon\in \mathbb{C}$ be generic such that $[r]_{\varepsilon_i}!\neq 0$, $\varepsilon\neq 0$, $i=1,\ldots ,l$. Define the complex associative algebra
${\mathbb{C}}_\varepsilon[G_*]={\mathbb{C}}_{\mathcal{A}'}[G_*]/(q^{\frac{1}{2d}}-\varepsilon^{\frac{1}{2d}})
{\mathbb{C}}_{\mathcal{A}'}[G_*]$. Then
the algebra ${\mathbb{C}}_\varepsilon[{G}_*]$ can be identified with  the ${\rm Ad}$ locally finite part $U_{\varepsilon}^s(\g)^{fin}$ of $U_{\varepsilon}^s(\g)$,
$$
U_{\varepsilon}^s(\g)^{fin}=\{x\in U_{\varepsilon}^s(\g):{\rm dim}({\rm Ad}U_{\varepsilon}^s(\g)(x))<+\infty \},
$$
where the adjoint action of the algebra $U_{\varepsilon}^s(\g)$ on itself is defined by formula (\ref{ad}).
\end{proposition}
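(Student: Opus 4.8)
The plan is to establish, inside $U_\varepsilon^s(\g)$, the two inclusions ${\mathbb{C}}_\varepsilon[G_*]\subseteq U_\varepsilon^s(\g)^{fin}$ and $U_\varepsilon^s(\g)^{fin}\subseteq {\mathbb{C}}_\varepsilon[G_*]$, the second being the substantive one. First one must know that the specialization map ${\mathbb{C}}_{\mathcal{A}'}[G_*]\to U_\varepsilon^s(\g)$ is injective for generic $\varepsilon$, so that ${\mathbb{C}}_\varepsilon[G_*]$ is genuinely a subalgebra of $U_\varepsilon^s(\g)$ and the statement makes sense. I would deduce this from the fact that $U_{\mathcal{A}'}^s(\g)$ is a free $\mathcal{A}'$--module with a PBW--type basis (Section~7 of \cite{S10}), in which, using the description of ${\mathbb{C}}_{\mathcal{A}'}[G^*]$ through the generators $\tilde e_\beta$, $\tilde f_\beta$ and the Cartan part together with Proposition~\ref{quantreg}, the submodule ${\mathbb{C}}_{\mathcal{A}'}[G_*]$ is spanned by a subset of basis vectors; then ${\mathbb{C}}_{\mathcal{A}'}[G_*]$ is an $\mathcal{A}'$--module direct summand of $U_{\mathcal{A}'}^s(\g)$, and injectivity of specialization is immediate.

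For the inclusion ${\mathbb{C}}_\varepsilon[G_*]\subseteq U_\varepsilon^s(\g)^{fin}$ I would use the fact, recalled just before the statement, that the adjoint action ${\rm Ad}$ of (\ref{ad}) is dual to a restriction of the dressing coaction of the quantized function algebra of $G$, and that ${\mathbb{C}}_{\mathcal{A}'}[G_*]$ is stable under it. Explicitly, for a finite--dimensional $U_\mathcal{A}^s(\g)$--module $V$ the dressing coaction on the matrix ${^q{L}^{V}}$ is a conjugation coaction, sending each entry $({^q{L}^{V}})_{ij}$, and each entry of $({^q{L}^{\pm,V}})^{-1}$, into the finite $\mathcal{A}'$--span of the entries of ${^q{L}^{V}}$, respectively $({^q{L}^{\pm,V}})^{-1}$. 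Hence every generator of ${\mathbb{C}}_{\mathcal{A}'}[G_*]$ lies in an ${\rm Ad}$--submodule of rank at most $({\rm dim}\,V)^2$, and since ${\rm Ad}$ is multiplicative in the twisted sense~(\ref{adm}), any element of ${\mathbb{C}}_{\mathcal{A}'}[G_*]$, being a polynomial in finitely many generators, lies in a finite--rank ${\rm Ad}$--submodule. This is preserved under specialization at $q=\varepsilon$, so every element of ${\mathbb{C}}_\varepsilon[G_*]$ is ${\rm Ad}$--locally finite.

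The reverse inclusion $U_\varepsilon^s(\g)^{fin}\subseteq {\mathbb{C}}_\varepsilon[G_*]$ is where the real work lies, and here I would adapt to the realization $U_\varepsilon^s(\g)$ the Joseph--Letzter structure theory of the locally finite part of a quantized enveloping algebra (\cite{Jos}, Ch.~7, and \cite{JL}). The crucial simplification is that $U_h^s(\g)$ differs from $U_h(\g)$ only by the algebra isomorphism $\psi_{\{n\}}$ of Proposition~\ref{newreal} together with a coalgebra twist built from Cartan elements, as is visible in the comparison of the universal R--matrix~(\ref{rmatrspi}) with the standard one, and such a twist preserves the weight grading. This should yield, for generic $\varepsilon$, a decomposition $U_\varepsilon^s(\g)^{fin}=\bigoplus_{\lambda}({\rm Ad}\,U_\varepsilon^s(\g))(\tau_\lambda)$, the sum being over dominant integral weights $\lambda$, with $\tau_\lambda$ a suitable monomial in the $t_i$ and each summand a finite--dimensional ${\rm Ad}$--submodule whose character equals that of ${\rm End}\,L(\lambda)$. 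On the other hand, taking $V=L(\lambda)$ and using the semi--commutative structure of $U_\varepsilon^s(\g)$ (relations~(\ref{scomm})) together with its triangular decomposition, the specialized matrix entries of ${^q{L}^{L(\lambda)}}$ span an ${\rm Ad}$--submodule of ${\mathbb{C}}_\varepsilon[G_*]$ with the same character, containing $\tau_\lambda$; comparing the two ${\rm Ad}$--modules weight space by weight space forces the $\lambda$--component of $U_\varepsilon^s(\g)^{fin}$ into ${\mathbb{C}}_\varepsilon[G_*]$, and summing over $\lambda$ gives the inclusion.

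The main obstacle I anticipate is precisely this transfer of the Joseph--Letzter theory. One must show that the Cartan twist relating $U_h^s(\g)$ and $U_h(\g)$ neither enlarges nor shrinks the set of ${\rm Ad}$--locally finite elements; one must push the decomposition of the locally finite part, established a priori over $\mathbb{C}(q^{\frac{1}{2d}})$ for $U_q^s(\g)$, down to the numerical specialization $U_\varepsilon^s(\g)$, where genericity of $\varepsilon$ must keep guaranteeing that the spectrum of the relevant central (quantum Casimir type) elements separates the isotypic components; and one must match the $\tau_\lambda$ with genuine matrix entries of the full matrix ${^q{L}^{L(\lambda)}}={^q{L}^{-,L(\lambda)}}({^q{L}^{+,L(\lambda)}})^{-1}$ rather than with entries of ${^q{L}^{\pm,L(\lambda)}}$ separately. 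Once the decomposition and this matching are in hand, the two inclusions give the asserted identification ${\mathbb{C}}_\varepsilon[G_*]=U_\varepsilon^s(\g)^{fin}$.
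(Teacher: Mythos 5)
Your plan matches the paper's proof in all essentials: the inclusion ${\mathbb{C}}_\varepsilon[G_*]\subseteq U_\varepsilon^s(\g)^{fin}$ comes from local cofiniteness of the dressing coaction, and the reverse inclusion comes from the Joseph--Letzter description of the locally finite part together with the identification of the relevant Cartan monomials (your $\tau_\lambda$, the paper's $L_i^2$) with matrix entries of ${^q{L^{V}}}$ for the fundamental representations. The only real difference is that the paper bypasses your weight-space character comparison: it cites Joseph's Theorem 7.1.6 and Lemma 7.1.16 in the form $U_\varepsilon^s(\g)^{fin}={\rm Ad}U_\varepsilon^s(\g)(\mathfrak{H})$ with $\mathfrak{H}$ the subalgebra generated by the $L_i^2$, so that once $L_i^2\in {\mathbb{C}}_\varepsilon[G_*]$ and ${\rm Ad}$--stability of ${\mathbb{C}}_\varepsilon[G_*]$ are established the inclusion is immediate.
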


\begin{proof}
Indeed, let $V_i$, $i=1,\ldots, l$ be the fundamental representations of $U_{\mathcal{A}}^s(\g)$ with highest weights $Y_i$, $i=1,\ldots, l$. From formula (\ref{rmatrspi}) and from the definition of ${^q{L^{V}}}=(id\otimes \pi_V){\mathcal R}^{s}{{\mathcal R}_{21}^{s}}$ it follows that the matrix element $(id\otimes v_i^*){\mathcal R}^{s}{{\mathcal R}_{21}^{s}}(id\otimes v_i)$ of ${^q{L^{V_i}}}$ corresponding to the highest weight vector $v_i$ of $V_i$ and to the lowest weight vector $v_i^*\in V_i^*$ of the dual representation $V_i^*$, normalized in such a way that $v_i^*(v_i)=1$, coincides with $L_i^2$. This implies that $L_i^2$ are elements of the algebra ${\mathbb{C}}_\varepsilon[G_*]\subset U_{\varepsilon}^s(\g)$ as well. Denote by $\mathfrak{H}\subset {\mathbb{C}}_{\varepsilon}[{G}_*]\subset U_{\varepsilon}^s(\g)$ the subalgebra generated by the elements $L_i^2\in {\mathbb{C}}_{\varepsilon}[{G}_*]$, $i=1,\ldots, l$. By Theorem 7.1.6 and Lemma 7.1.16 in \cite{Jos} $U_{\varepsilon}^s(\g)^{fin}={\rm Ad}U_{\varepsilon}^s(\g)\mathfrak{H}$. Since ${\mathbb{C}}_{\varepsilon}[{G}_*]$ is stable under the adjoint action we have an inclusion, $U_{\varepsilon}^s(\g)^{fin}\subset {\mathbb{C}}_{\varepsilon}[{G}_*]$. On the other hand from formula (3.26) in \cite{dual} it follows that the dressing coaction on ${\mathbb{C}}_{\varepsilon}[{G}_*]$ is locally cofinite, and hence the adjoint action of $U_{\varepsilon}^s(\g)$ on ${\mathbb{C}}_{\varepsilon}[{G}_*]$ is locally finite. Hence ${\mathbb{C}}_{\varepsilon}[{G}_*]\subset U_{\varepsilon}^s(\g)^{fin}$, and ${\mathbb{C}}_{\varepsilon}[{G}_*]=U_{\varepsilon}^s(\g)^{fin}$.
\end{proof}

Denote by $I_q$ the left ideal in ${\mathbb{C}}_{\mathcal{A}'}[G^*]$ generated by the kernel of $\chi_q^s$, and by $\rho_{\chi^{s}_q}$ the canonical projection ${\mathbb{C}}_{\mathcal{A}'}[G^*]\rightarrow {\mathbb{C}}_{\mathcal{A}'}[G^*]/I_q$. Let $Q_{\mathcal{A}'}$ be the image of ${\mathbb{C}}_{\mathcal{A}'}[G_*]$ under the projection $\rho_{\chi^{s}_q}$.

Assume that the roots $\gamma_1, \ldots , \gamma_n$ are simple or that  the set $\gamma_1, \ldots , \gamma_n$ is empty.
Then from formula (\ref{comults}) and from the definition of the normal ordering of $\Delta_+$ associated to $s$ it follows that $\Delta_s(U_{\mathcal{A}'}^s(\m_+))\subset U_{\mathcal{A}'}^s(\m_+)\otimes U_{\mathcal{A}'}^s(\b_+)$, where $U_{\mathcal{A}'}^s(\m_+)=U_{\mathcal{A}}^s(\m_+)\otimes_{\mathcal{A}} \mathcal{A'}$, $U_{\mathcal{A}'}^s(\b_+)=U_{\mathcal{A}}^s(\b_+)\otimes_{\mathcal{A}} \mathcal{A'}$.

Now observe that from Proposition \ref{Qdefr} it follows that the r.h.s. in formula (\ref{cmrel}) belongs to the subspace $(1-q_{\alpha}^{-2}){\rm Ker }\chi_q^{s}$ and hence dividing (\ref{cmrel}) by $(1-q_{\alpha}^{-2})$ we obtain
\begin{equation}\label{epr4}
{e}_{\alpha}\tilde{e}_{\beta} - q^{(\alpha,\beta)+({1+s \over 1-s}P_{{\h'}^*}\alpha,\beta)}\tilde{e}_{\beta}{e}_{\alpha}= \sum_{\alpha<\delta_1<\ldots<\delta_n<\beta}C'''(k_1,\ldots,k_n)
{\tilde{e}_{\delta_1}}^{k_1}{\tilde{e}_{\delta_2}}^{k_2}\ldots {\tilde{e}_{\delta_n}}^{k_n},
\end{equation}
where $C'''(k_1,\ldots,k_n)=C''(k_1,\ldots,k_n)/(1-q_{\alpha}^{-2})\in \mathcal{A}'$.

Therefore we have an inclusion $[U_{\mathcal{A}'}^s(\m_+),{\rm Ker } \chi_q^{s}]\subset {\rm Ker} \chi_q^{s}$. Using this inclusion, formula (\ref{ad}), the fact that $\Delta_s(U_{\mathcal{A}'}^s(\m_+))\subset U_{\mathcal{A}'}^s(\m_+)\otimes U_{\mathcal{A}'}^s(\b_+)$ (see formula (\ref{comults})) we deduce  that the adjoint action of $U_{\mathcal{A}'}^s(\m_+)$ on ${\mathbb{C}}_{\mathcal{A}'}[G_*]$ induces an adjoint action on $Q_{\mathcal{A}'}$ which we also call the adjoint action and denote it by $\rm Ad$.

Let $\mathbb{C}_{\mathcal{A}'}$ be the trivial representation of $U_{\mathcal{A}'}^s(\m_+)$ given by the counit. Consider the space $W_q^s(G)$ of $\rm Ad$--invariants in $Q_{\mathcal{A}'}$,
\begin{equation}\label{QW}
W_q^s(G)={\rm Hom}_{U_{\mathcal{A}'}^s(\m_+)}(\mathbb{C}_{\mathcal{A}'},Q_{\mathcal{A}'}).
\end{equation}

\begin{proposition}\label{9.2}
$W_q^s(G)$ is isomorphic to the subspace of all $v+I_q\in Q_{\mathcal{A}'}$ such that $mv\in I_q$ (or $[m,v]\in I_q$) in ${\mathbb{C}}_{\mathcal{A}'}[G^*]$ for any $m\in I_q$, where $v\in {\mathbb{C}}_{\mathcal{A}'}[G^*]$ is any representative of $v+I_q\in Q_{\mathcal{A}'}$.

Multiplication in ${\mathbb{C}}_{\mathcal{A}'}[G^*]$ induces a multiplication on the space $W_q^s(G)$.
\end{proposition}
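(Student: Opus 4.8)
The plan is to realise $W_q^s(G)$ as the space of $\rm Ad$--invariant vectors of $Q_{\mathcal{A}'}$ and then to rewrite the invariance condition, one generator at a time, as the stated left multiplication condition. A $U_{\mathcal{A}'}^s(\m_+)$--module morphism $\mathbb{C}_{\mathcal{A}'}\to Q_{\mathcal{A}'}$ is determined by the image $v+I_q$ of $1$, and is a morphism precisely when ${\rm Ad}\,x(v+I_q)=\varepsilon_s(x)(v+I_q)$ for every $x\in U_{\mathcal{A}'}^s(\m_+)$, where $\varepsilon_s$ is the counit; thus $W_q^s(G)$ is identified, as an $\mathcal{A}'$--module, with the $\rm Ad$--invariant subspace of $Q_{\mathcal{A}'}$. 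Since $\rm Ad$ is a right action, i.e. ${\rm Ad}(xy)={\rm Ad}(y)\circ{\rm Ad}(x)$, and $\varepsilon_s$ is an algebra homomorphism, the set of $x$ leaving a fixed $v+I_q$ invariant is a subalgebra of $U_{\mathcal{A}'}^s(\m_+)$; since $U_{\mathcal{A}'}^s(\m_+)$ is generated by the root vectors $e_\beta$, $\beta\in\Delta_{\m_+}$ (Proposition \ref{rootss}), and $\varepsilon_s(e_\beta)=0$ because $e_\beta$ has nonzero weight, $v+I_q\in Q_{\mathcal{A}'}$ lies in $W_q^s(G)$ if and only if ${\rm Ad}(e_\beta)(v)\in I_q$ for every $\beta\in\Delta_{\m_+}$ and one, hence any, representative $v\in{\mathbb{C}}_{\mathcal{A}'}[G_*]$. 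Throughout we keep the standing assumption of this part of the paper that $\gamma_1,\dots,\gamma_n$ are simple roots or $\{\gamma_1,\dots,\gamma_n\}$ is empty, which guarantees $\Delta_s(U_{\mathcal{A}'}^s(\m_+))\subset U_{\mathcal{A}'}^s(\m_+)\otimes U_{\mathcal{A}'}^s(\b_+)$.

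Next I would describe $I_q$ concretely. As $\chi_q^s$ is an algebra character and ${\mathbb{C}}_{\mathcal{A}'}[M_-]$ has the PBW basis $\{\tilde e^{\bf r}\}$ in the generators $\tilde e_\beta$, $\beta\in\Delta_{\m_+}$, a telescoping computation shows that ${\rm Ker}\,\chi_q^s$ is spanned, as a left ${\mathbb{C}}_{\mathcal{A}'}[M_-]$--module, by the elements $\tilde e_\beta-\chi_q^s(\tilde e_\beta)$, $\beta\in\Delta_{\m_+}$. Hence $I_q={\mathbb{C}}_{\mathcal{A}'}[G^*]\,{\rm Ker}\,\chi_q^s=\sum_{\beta\in\Delta_{\m_+}}{\mathbb{C}}_{\mathcal{A}'}[G^*](\tilde e_\beta-\chi_q^s(\tilde e_\beta))$, so for $v\in{\mathbb{C}}_{\mathcal{A}'}[G^*]$ the condition ``$mv\in I_q$ for all $m\in I_q$'' is equivalent to ``$(\tilde e_\beta-\chi_q^s(\tilde e_\beta))v\in I_q$ for all $\beta\in\Delta_{\m_+}$''. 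Moreover $v(\tilde e_\beta-\chi_q^s(\tilde e_\beta))\in{\mathbb{C}}_{\mathcal{A}'}[G^*]\,{\rm Ker}\,\chi_q^s=I_q$, whence $[\tilde e_\beta,v]\equiv(\tilde e_\beta-\chi_q^s(\tilde e_\beta))v\ (\mbox{mod }I_q)$, and writing $m=a(\tilde e_\beta-\chi_q^s(\tilde e_\beta))$ with $a\in{\mathbb{C}}_{\mathcal{A}'}[G^*]$ and using $[a,v](\tilde e_\beta-\chi_q^s(\tilde e_\beta))\in I_q$ one sees that ``$[m,v]\in I_q$ for all $m\in I_q$'' is the same condition; this justifies the parenthetical alternative in the statement.

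The heart of the matter, and the step I expect to be the main obstacle, is the equivalence: for $\beta\in\Delta_{\m_+}$ and $v\in{\mathbb{C}}_{\mathcal{A}'}[G_*]$, ${\rm Ad}(e_\beta)(v)\in I_q$ if and only if $(\tilde e_\beta-\chi_q^s(\tilde e_\beta))v\in I_q$. To prove it one substitutes formula (\ref{comults}) for $\Delta_s(e_\beta)$ and the antipode formula for $S_s$ into ${\rm Ad}(e_\beta)(v)=S_s^{-1}((e_\beta)_2)\,v\,(e_\beta)_1$. The principal part $e_\beta\otimes e^{h\frac{1+s}{1-s}P_{{\h'}}\beta^\vee}+1\otimes e_\beta$ of the coproduct contributes to ${\rm Ad}(e_\beta)(v)$ a Cartan--twisted $q$--commutator of $v$ with $e_\beta=(1-q_\beta^{-2})^{-1}\tilde e_\beta$; absorbing the invertible grouplike Cartan factors, which preserve $I_q$, and using that $v$ times any element of ${\rm Ker}\,\chi_q^s$ lies in $I_q$, this reduces modulo $I_q$ to $(\tilde e_\beta-\chi_q^s(\tilde e_\beta))v$. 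The remaining contributions come from the conjugating factors $\widetilde{\mathcal R}^s_{<\beta_k}$, which, after the cancellations responsible for the inclusion $\Delta_s(U_{\mathcal{A}'}^s(\m_+))\subset U_{\mathcal{A}'}^s(\m_+)\otimes U_{\mathcal{A}'}^s(\b_+)$, involve only the root vectors $e_{\beta'}$ with $\gamma_1\le\beta'<\beta$; by induction over the normal ordering of $\Delta_+$ associated to $s$, together with the triangular relations (\ref{cmrel})--(\ref{epr4}), one shows that these terms also lie in $I_q$. Conceptually this equivalence is forced by the fact, noted in the text, that $\rm Ad$ is dual to a restriction of the dressing coaction whereas $I_q$ is built from the character $\chi_q^s$; the real difficulty is the technical control of the $\widetilde{\mathcal R}^s$--tails of the coproduct modulo $I_q$, for which Proposition \ref{Qdefr}---the precise triangular form of (\ref{cmrel}), with structure constants having a simple zero at $q=1$ so that (\ref{epr4}) is defined over $\mathcal{A}'$---is exactly what is needed.

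Granting this equivalence, the previous paragraphs identify the $\rm Ad$--invariant subspace of $Q_{\mathcal{A}'}$ with $S:=\{v+I_q\in Q_{\mathcal{A}'}:(\tilde e_\beta-\chi_q^s(\tilde e_\beta))v\in I_q\ \forall\beta\}=\{v+I_q\in Q_{\mathcal{A}'}:mv\in I_q\ \forall m\in I_q\}=\{v+I_q\in Q_{\mathcal{A}'}:[m,v]\in I_q\ \forall m\in I_q\}$, which is the first assertion. For the second assertion I would set $(v_1+I_q)(v_2+I_q):=v_1v_2+I_q$ for $v_1+I_q,v_2+I_q\in S$. This is well defined: replacing $v_1$ by $v_1+m_1$ with $m_1\in I_q$ alters the product by $m_1v_2\in I_q$ since $v_2\in S$ gives $I_qv_2\subset I_q$; replacing $v_2$ by $v_2+m_2$ alters it by $v_1m_2\in I_q$ since $I_q$ is a left ideal; and $v_1v_2+I_q$ again lies in $Q_{\mathcal{A}'}$ because the representatives may be chosen in the subalgebra ${\mathbb{C}}_{\mathcal{A}'}[G_*]$. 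Finally $S$ is closed under this product, since $(\tilde e_\beta-\chi_q^s(\tilde e_\beta))v_1v_2=\big((\tilde e_\beta-\chi_q^s(\tilde e_\beta))v_1\big)v_2\in I_qv_2\subset I_q$; associativity and the unit $1+I_q$ are inherited from ${\mathbb{C}}_{\mathcal{A}'}[G^*]$. This would complete the proposed proof.
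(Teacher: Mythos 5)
Your proposal is correct and takes essentially the same route as the paper: the paper likewise reduces to the generators $e_{\beta_k}$, uses (\ref{comults}) to write ${\rm Ad}\,e_{\beta_k}v\equiv -\frac{1}{1-q_{\beta_k}^{-2}}e^{-h(\cdots)\beta_k^\vee}\bigl(\tilde e_{\beta_k}-\chi_q^s(\tilde e_{\beta_k})\bigr)v+yv \ (\mbox{mod }I_q)$ with $y$ built from the $\tilde e_{\beta_r}-\chi_q^s(\tilde e_{\beta_r})$, $\beta_r<\beta_k$, and then runs exactly your induction over the normal ordering starting from $I_{<\beta_1}=0$. The one caveat is that your per-root ``if and only if'' is valid only once the inductive hypothesis $(\tilde e_{\beta'}-\chi_q^s(\tilde e_{\beta'}))v\in I_q$ for $\beta'<\beta$ is in force --- which is precisely what your induction supplies --- while your checks of the commutator variant and of well-definedness of the product are more explicit than the paper's.
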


\begin{proof}
From formulas (\ref{comults}) and (\ref{epr4}) it follows that for $\beta_k\in \Delta_{\m_+}$
\begin{equation}\label{cm1}
\Delta_s(e_{\beta_k})=e_{\beta_k}\otimes e^{h({2 \over 1-s}P_{{\h'}}+P_{{\h'}^\perp})\beta_k^\vee}+1\otimes e_{\beta_k}+x,~x\in I_{<\beta_k}\otimes U_{\mathcal{A}'}^s(\b_+),
\end{equation}
where $I_{<\beta_k}$ is the intersection of the ideal $I_q$ and of the subalgebra in ${\mathbb{C}}_{\mathcal{A}'}[M_-]$ generated by $\tilde{e}_{\beta_r}$, $0<\beta_r<\beta_k$.

Recall that $S^{-1}_s$ is the antipode for the comultiplication $\Delta_s^{opp}$ and hence from the definition of the antipode, the fact that $S^{-1}_s(U_{\mathcal{A}'}^s(\b_+))\subset U_{\mathcal{A}'}^s(\b_+)$ and inclusion (\ref{cm1}) we must have
$$
S^{-1}_s(e_{\beta_k})+e^{-h({2 \over 1-s}P_{{\h'}}+P_{{\h'}^\perp})\beta_k^\vee}e_{\beta_k}\in U_{\mathcal{A}'}^s(\b_+)I_{<\beta_k}.
$$
Therefore
$$
S^{-1}_s(e_{\beta_k})=-e^{-h({2 \over 1-s}P_{{\h'}}+P_{{\h'}^\perp})\beta_k^\vee}e_{\beta_k}+y, y\in U_{\mathcal{A}'}^s(\b_+)I_{<\beta_k}
$$

Now the last formula, formula (\ref{cm1}) and the definition of the adjoint action of $U_{\mathcal{A}'}^s(\m_+)$ on $Q_\mathcal{A}'$ imply that for any representative $v\in {\mathbb{C}}_{\mathcal{A}'}[G^*]$ of any element $v+I_q\in Q_{\mathcal{A}'}$ we have the following identity in ${\mathbb{C}}_{\mathcal{A}'}[G^*]$
\begin{equation}\label{qqq}
{\rm Ad}e_{\beta_k}v=-\frac{1}{(1-q_{\beta_k}^{-2})}e^{-h({2 \over 1-s}P_{{\h'}}+P_{{\h'}^\perp})\beta_k^\vee}(\tilde{e}_{\beta_k}-\chi_q^s(\tilde{e}_{\beta_k}))v+yv+z,~y\in U_{\mathcal{A}'}^s(\b_+)I_{<\beta_k},~z\in I_q.
\end{equation}

From the last identity it obviously follows that if $mv\in I_q$ in ${\mathbb{C}}_{\mathcal{A}'}[G^*]$ for any $m\in I_q$ then $v+I_q$ is invariant with respect to the adjoint action.

Now let $v+I_q\in Q_{\mathcal{A}'}$, be an element which is invariant with respect to the adjoint action,
$$
{\rm Ad}x(v)=\varepsilon(x)v+z',~x\in U_{\mathcal{A}'}^s(\m_+),~z'\in I_q.
$$

Since $\beta_1\in \Delta_{\m_+}$ is the first positive root in the normal ordering associated to $s$ we have $I_{<\beta_1}=0$, and (\ref{qqq}) implies that
$$
z'={\rm Ad}e_{\beta_1}v=-\frac{1}{(1-q_{\beta_1}^{-2})}e^{-h({2 \over 1-s}P_{{\h'}}+P_{{\h'}^\perp})\beta_1^\vee}(\tilde{e}_{\beta_1}-\chi_q^s(\tilde{e}_{\beta_1}))v+z, ~z\in I_q.
$$
We obtain from the last identity that
$$
-\frac{1}{(1-q_{\beta_1}^{-2})}e^{-h({2 \over 1-s}P_{{\h'}}+P_{{\h'}^\perp})\beta_1^\vee}(\tilde{e}_{\beta_1}-\chi_q^s(\tilde{e}_{\beta_1})){v}\in I_q
$$
which is obviously possible only in case if $(\tilde{e}_{\beta_1}-\chi_q^s(\tilde{e}_{\beta_1})){v}\in I_q$, i.e. when
$$
(\tilde{e}_{\beta_1}-\chi_q^s(\tilde{e}_{\beta_1})){v}\in I_q.
$$

Now we proceed by induction. Assume that
$$
(\tilde{e}_{\beta_r}-\chi_q^s(\tilde{e}_{\beta_r})){v}\in I_q
$$
for $0<\beta_r<\beta_k$. From (\ref{qqq}) we have
$$
{\rm Ad}e_{\beta_k}v=-\frac{1}{(1-q_{\beta_k}^{-2})}e^{-h({2 \over 1-s}P_{{\h'}}+P_{{\h'}^\perp})\beta_k^\vee}(\tilde{e}_{\beta_k}-\chi_q^s(\tilde{e}_{\beta_k}))v+yv,~y\in U_{\mathcal{A}'}^s(\b_+)I_{<\beta_k}\in I_q.
$$
By the induction hypothesis $I_{<\beta_k}v \in I_q$, and hence
$$
{\rm Ad}e_{\beta_k}v=-\frac{1}{(1-q_{\beta_k}^{-2})}e^{-h({2 \over 1-s}P_{{\h'}}+P_{{\h'}^\perp})\beta_k^\vee}(\tilde{e}_{\beta_k}-\chi_q^s(\tilde{e}_{\beta_k}))v\in I_q.
$$
Finally an argument similar to that applied in case $k=1$ shows that $(\tilde{e}_{\beta_k}-\chi_q^s(\tilde{e}_{\beta_k}))v \in I_q$. This establishes the induction step and proves that
\begin{equation}\label{qqqqqq}
(\tilde{e}_{\beta_r}-\chi_q^s(\tilde{e}_{\beta_r})){v}\in I_q
\end{equation}
for any $\beta \in \Delta_{\m_+}$. Since as a left ideal $I_q$ is generated by the elements $\tilde{e}_{\beta_r}-\chi_q^s(\tilde{e}_{\beta_r})$, $\beta \in \Delta_{\m_+}$ (\ref{qqqqqq}) proves that $mv\in I_q$ in ${\mathbb{C}}_{\mathcal{A}'}[G^*]$ for any $m\in I_q$.

Now if $v_1,v_2\in {\mathbb{C}}_{\mathcal{A}'}[G^*]$ are any representatives of elements $v_1+I_q,v_2+I_q\in W_q^s(G)$ the formula
$$
(v_1+I_q)(v_2+I_q)=v_1v_2+I_q
$$
defines a multiplication in $W_q^s(G)$.
\end{proof}

We call the space $W_q^s(G)$ equipped with the multiplication defined in the previous proposition the q-W algebra associated to (the conjugacy class of) the Weyl group element $s\in W$.

Now consider the Lie algebra $\mathfrak{L}_{\mathcal{A}'}$ associated to the associative algebra ${\mathbb{C}}_{\mathcal{A}'}[M_-]$, i.e. $\mathfrak{L}_{\mathcal{A}'}$ is the Lie algebra which is isomorphic to ${\mathbb{C}}_{\mathcal{A}'}[M_-]$ as a linear space, and the Lie bracket in $\mathfrak{L}_{\mathcal{A}'}$ is given by the usual commutator of elements in ${\mathbb{C}}_{\mathcal{A}'}[M_-]$.

Define an action of the Lie algebra $\mathfrak{L}_{\mathcal{A}'}$ on the space ${\mathbb{C}}_{\mathcal{A}'}[G^*]/I_q$:
\begin{equation}\label{qmainactcl}
m\cdot (x+I_q) =\rho_{\chi^{s}_q}([m,x] ).
\end{equation}
where $x\in {\mathbb{C}}_{\mathcal{A}'}[G^*]$ is any representative of $x+I_q\in {\mathbb{C}}_{\mathcal{A}'}[G^*]/I_q$ and $m\in {\mathbb{C}}_{\mathcal{A}'}[M_-]$.
The algebra $W_q^s(G)$ can be regarded as the intersection of the space of invariants with respect to action (\ref{qmainactcl}) with the subspace $Q_{\mathcal{A}'}\subset {\mathbb{C}}_{\mathcal{A}'}[G^*]/I_q$.

Note also that since ${\chi_q^s}$ is a character of ${\mathbb{C}}_{\mathcal{A}'}[M_-]$ the ideal $I_q$ is stable under that action of ${\mathbb{C}}_{\mathcal{A}'}[M_-]$ on ${\mathbb{C}}_{\mathcal{A}'}[G^*]$ by commutators.

Denote by $\mathbb{C}_{\chi_q^{s}}$ the rank one representation of the algebra ${\mathbb{C}}_{\mathcal{A}'}[M_-]$ defined by the character $\chi_q^{s}$. Using the description of the algebra $W_q^s(G)$ in terms of action (\ref{qmainactcl}) and the isomorphism ${\mathbb{C}}_{\mathcal{A}'}[G^*]/I_q=
{\mathbb{C}}_{\mathcal{A}'}[G^*]\otimes_{{\mathbb{C}}_{\mathcal{A}'}[M_-]}\mathbb{C}_{\chi_q^{s}}$ one can also define
the algebra $W_q^s(G)$ as the intersection
$$
W_q^s(G)={\rm Hom}_{{\mathbb{C}}_{\mathcal{A}'}[M_-]}(\mathbb{C}_{\chi_q^{s}},
{\mathbb{C}}_{\mathcal{A}'}[G^*]\otimes_{{\mathbb{C}}_{\mathcal{A}'}[M_-]}\mathbb{C}_{\chi_q^{s}})\cap Q_{\mathcal{A}'}.
$$
Using Frobenius reciprocity we also have
$$
{\rm Hom}_{{\mathbb{C}}_{\mathcal{A}'}[M_-]}(\mathbb{C}_{\chi_q^{s}},
{\mathbb{C}}_{\mathcal{A}'}[G^*]\otimes_{{\mathbb{C}}_{\mathcal{A}'}[M_-]}\mathbb{C}_{\chi_q^{s}})={\rm End}_{{\mathbb{C}}_{\mathcal{A}'}[G^*]}({\mathbb{C}}_{\mathcal{A}'}[G^*]\otimes_{{\mathbb{C}}_{\mathcal{A}'}[M_-]}\mathbb{C}_{\chi_q^{s}}).
$$
Hence the algebra $W_q^s(G)$ acts on the space ${\mathbb{C}}_{\mathcal{A}'}[G^*]\otimes_{{\mathbb{C}}_{\mathcal{A}'}[M_-]}\mathbb{C}_{\chi_q^{s}}$ from the right by operators commuting with the natural left ${\mathbb{C}}_{\mathcal{A}'}[G^*]$--action on ${\mathbb{C}}_{\mathcal{A}'}[G^*]\otimes_{{\mathbb{C}}_{\mathcal{A}'}[M_-]}\mathbb{C}_{\chi_q^{s}}$. By the definition of $W_q^s(G)$ this action preserves $Q_{\mathcal{A}'}$ and by the above presented arguments it commutes with the natural left ${\mathbb{C}}_{\mathcal{A}'}[G_*]$--action on $Q_{\mathcal{A}'}$.

Thus $Q_{\mathcal{A}'}$ is a ${\mathbb{C}}_{\mathcal{A}'}[G_*]$--$W_q^s(G)$ bimodule equipped also with the adjoint action of $U_{\mathcal{A}'}^s(\m_+)$.  By (\ref{adm}) the adjoint action satisfies
\begin{equation}\label{adm4}
{\rm Ad}x(yv)={\rm Ad}x_2(y){\rm Ad}x_1(v),~x\in U_{\mathcal{A}'}^s(\m_+),~y\in {\mathbb{C}}_{\mathcal{A}'}[G_*],v\in Q_{\mathcal{A}'},
\end{equation}
and $\Delta_s(x)=x_1\otimes x_2$.

Denote by $v_0$ the image of the element $1 \in {\mathbb{C}}_{\mathcal{A}'}[G_*]$ in the quotient $Q_{\mathcal{A}'}$ under the canonical projection ${\mathbb{C}}_{\mathcal{A}'}[G_*]\rightarrow Q_{\mathcal{A}'}$. Obviously $v_0$ is the generating vector for $Q_{\mathcal{A}'}$ as a module over ${\mathbb{C}}_{\mathcal{A}'}[G_*]$.
Using formula (\ref{adm4}) and recalling that $Q_{\mathcal{A}'}$ is a ${\mathbb{C}}_{\mathcal{A}'}[G_*]$--$W_q^s(G)$ bimodule, for $x\in U_{\mathcal{A}'}^s(\m_+), y\in {\mathbb{C}}_{\mathcal{A}'}[G_*]$, and for a representative $w\in {\mathbb{C}}_{\varepsilon}[G_*]$ of an element $w+I_q\in W_q^s(G)$ we have
$$
{\rm Ad}x(wyv_0)={\rm Ad}x(ywv_0)={\rm Ad}x_2(y){\rm Ad}x_1(wv_0)=
$$
$$
={\rm Ad}x_2(y)\varepsilon(x_1)wv_0={\rm Ad}x(y)wv_0=w{\rm Ad}x(yv_0).
$$
Since $Q_{\mathcal{A}'}$ is generated by the vector $v_0$ over ${\mathbb{C}}_{\mathcal{A}'}[G_*]$ the last relation implies that the action of $W_q^s(G)$ on $Q_{\mathcal{A}'}$ commutes with the adjoint action.

We can summarize the results of the above discussion in the following proposition.
\begin{proposition}\label{Qpr}
The space $Q_{\mathcal{A}'}$ is naturally equipped with the structure of a left ${\mathbb{C}}_{\mathcal{A}'}[G_*]$--module, a right $U_{\mathcal{A}'}^s(\m_+)$--module via the adjoint action and a right $W_q^s(G)$--module in such a way that the left ${\mathbb{C}}_{\mathcal{A}'}[G_*]$--action and the right $U_{\mathcal{A}'}^s(\m_+)$--action commute with the right $W_q^s(G)$--action and compatibility condition (\ref{adm4}) is satisfied.
\end{proposition}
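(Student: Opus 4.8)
\noindent{\bf Proof plan.} The statement merely assembles the facts built up in the discussion preceding it, so the plan is to record the three module structures in order and then check the two compatibilities; the only recurring point of care is to remain inside the ring $\mathcal{A}'$. First I would produce the left ${\mathbb{C}}_{\mathcal{A}'}[G_*]$--module structure. Since $I_q$ is a left ideal of ${\mathbb{C}}_{\mathcal{A}'}[G^*]$, the quotient ${\mathbb{C}}_{\mathcal{A}'}[G^*]/I_q$ is a left ${\mathbb{C}}_{\mathcal{A}'}[G^*]$--module and hence a left module over its subalgebra ${\mathbb{C}}_{\mathcal{A}'}[G_*]$; because the latter is a subalgebra, its image $Q_{\mathcal{A}'}=\rho_{\chi^s_q}({\mathbb{C}}_{\mathcal{A}'}[G_*])$ is stable under this action and coincides with the cyclic submodule ${\mathbb{C}}_{\mathcal{A}'}[G_*]v_0$ generated by $v_0=\rho_{\chi^s_q}(1)$. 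This cyclicity is the observation that will make the later verifications routine.

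Next I would treat the right $U_{\mathcal{A}'}^s(\m_+)$--action. Here I would invoke that ${\mathbb{C}}_{\mathcal{A}'}[G_*]$ is stable under the adjoint action (\ref{ad}) of all of $U_{\mathcal{A}'}^s(\g)$, that under the standing hypothesis that $\gamma_1,\ldots,\gamma_n$ are simple (or that this set is empty) one has $\Delta_s(U_{\mathcal{A}'}^s(\m_+))\subset U_{\mathcal{A}'}^s(\m_+)\otimes U_{\mathcal{A}'}^s(\b_+)$ by (\ref{comults}), and that dividing (\ref{cmrel}) by $(1-q_\alpha^{-2})$ --- which is (\ref{epr4}), the place where Proposition \ref{Qdefr} and the enlarged ring $\mathcal{A}'$ are used --- yields $[U_{\mathcal{A}'}^s(\m_+),{\rm Ker}\,\chi_q^s]\subset {\rm Ker}\,\chi_q^s$. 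Together with (\ref{adm}) these show that ${\rm Ad}\,U_{\mathcal{A}'}^s(\m_+)$ carries $I_q$ into itself and so descends to $Q_{\mathcal{A}'}$; the compatibility (\ref{adm4}) is then nothing but (\ref{adm}).

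Then I would construct the right $W_q^s(G)$--action using Proposition \ref{9.2} and the Frobenius reciprocity isomorphism ${\rm Hom}_{{\mathbb{C}}_{\mathcal{A}'}[M_-]}(\mathbb{C}_{\chi_q^s},{\mathbb{C}}_{\mathcal{A}'}[G^*]\otimes_{{\mathbb{C}}_{\mathcal{A}'}[M_-]}\mathbb{C}_{\chi_q^s})={\rm End}_{{\mathbb{C}}_{\mathcal{A}'}[G^*]}({\mathbb{C}}_{\mathcal{A}'}[G^*]\otimes_{{\mathbb{C}}_{\mathcal{A}'}[M_-]}\mathbb{C}_{\chi_q^s})$: an element $w+I_q\in W_q^s(G)$ acts on ${\mathbb{C}}_{\mathcal{A}'}[G^*]/I_q$ by $\phi_w\colon a+I_q\mapsto aw+I_q$, which is well defined because $mw\in I_q$ for $m\in I_q$ (Proposition \ref{9.2}) and is left ${\mathbb{C}}_{\mathcal{A}'}[G^*]$--linear; taking the representative $w$ inside ${\mathbb{C}}_{\mathcal{A}'}[G_*]$, legitimate since $W_q^s(G)\subset Q_{\mathcal{A}'}$, shows that $\phi_w$ preserves $Q_{\mathcal{A}'}={\mathbb{C}}_{\mathcal{A}'}[G_*]v_0$, and its ${\mathbb{C}}_{\mathcal{A}'}[G^*]$--linearity restricts to commutation with the left ${\mathbb{C}}_{\mathcal{A}'}[G_*]$--action. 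For commutation with the $U_{\mathcal{A}'}^s(\m_+)$--adjoint action I would note that $v_0$ is ${\rm Ad}$--invariant since ${\rm Ad}x(1)=\varepsilon(x)1$, and every $wv_0\in W_q^s(G)$ is ${\rm Ad}$--invariant by the very definition (\ref{QW}); writing a general element of $Q_{\mathcal{A}'}$ as $yv_0$ with $y\in {\mathbb{C}}_{\mathcal{A}'}[G_*]$ and using that $\phi_w$ commutes with the left action, two uses of (\ref{adm4}) and the counit axiom give ${\rm Ad}x(\phi_w(yv_0))={\rm Ad}x(y(wv_0))={\rm Ad}x_2(y){\rm Ad}x_1(wv_0)={\rm Ad}x_2(y)\varepsilon(x_1)wv_0=({\rm Ad}x(y))wv_0=\phi_w({\rm Ad}x(yv_0))$, which is the desired commutation.

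I expect the main obstacle to be the descent of the $U_{\mathcal{A}'}^s(\m_+)$--adjoint action to $Q_{\mathcal{A}'}$ without leaving the $\mathcal{A}'$--form --- in effect, verifying that all the operators above are defined over $\mathcal{A}'$ rather than only over $\mathbb{C}(q^{\frac{1}{2d}})$ --- and this is precisely what the order--one vanishing at $q=1$ of the structure constants $C''$ in Proposition \ref{Qdefr} and the adjunction of the elements $\frac{1-q^{\frac{1}{2d}}}{1-q_i^{-2}}$ to $\mathcal{A}'$ are there to supply; everything else reduces to the Hopf--algebra identities (\ref{ad}), (\ref{adm}), (\ref{adm4}) and the cyclicity of $Q_{\mathcal{A}'}$ over ${\mathbb{C}}_{\mathcal{A}'}[G_*]$.
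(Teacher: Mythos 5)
Your proposal is correct and follows essentially the same route as the paper: the paper's ``proof'' is precisely the discussion preceding the proposition (the left module structure from $I_q$ being a left ideal, the descent of ${\rm Ad}\,U_{\mathcal{A}'}^s(\m_+)$ to $Q_{\mathcal{A}'}$ via $[U_{\mathcal{A}'}^s(\m_+),{\rm Ker}\,\chi_q^s]\subset{\rm Ker}\,\chi_q^s$ and $\Delta_s(U_{\mathcal{A}'}^s(\m_+))\subset U_{\mathcal{A}'}^s(\m_+)\otimes U_{\mathcal{A}'}^s(\b_+)$, the right $W_q^s(G)$--action via Proposition \ref{9.2}, and the same chain of equalities using cyclicity of $Q_{\mathcal{A}'}$ over $v_0$ and (\ref{adm4}) for the commutation with the adjoint action). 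Your only deviation --- defining the $W_q^s(G)$--action explicitly by right multiplication on representatives rather than quoting Frobenius reciprocity --- is cosmetic, since the paper's multiplication formula in Proposition \ref{9.2} amounts to the same thing.
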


In conclusion we remark that by specializing $q$ to a particular value $\varepsilon\in \mathbb{C}$ such that $[r]_{\varepsilon_i}!\neq 0$, $\varepsilon\neq 0$, $i=1,\ldots ,l$, one can define a complex associative algebra
${\mathbb{C}}_\varepsilon[G_*]={\mathbb{C}}_{\mathcal{A}'}[G_*]/(q^{\frac{1}{2d}}-\varepsilon^{\frac{1}{2d}})
{\mathbb{C}}_{\mathcal{A}'}[G_*]$, its subalgebra ${\mathbb{C}}_\varepsilon[M_-]$ with a nontrivial character $\chi_\varepsilon^{s}$ and the corresponding W--algebra
\begin{equation}\label{eW}
W_\varepsilon^s(G)={\rm Hom}_{U_{\varepsilon}^s(\m_+)}(\mathbb{C}_{\varepsilon},Q_\varepsilon),
\end{equation}
where $\mathbb{C}_{\varepsilon}$ is the trivial representation of the algebra $U_{\varepsilon}^s(\m_+)$ induced by the counit, $Q_\varepsilon=Q_{\mathcal{A}'}/Q_{\mathcal{A}'}(q^{\frac{1}{2d}}-\varepsilon^{\frac{1}{2d}})$.

Obviously, for generic $\varepsilon$ we have $W_\varepsilon^s(G)=W_q^s(G)/(q^{\frac{1}{2d}}-\varepsilon^{\frac{1}{2d}})W_q^s(G)$.


\section{Poisson reduction and q-W algebras}\label{wpsred}

\setcounter{equation}{0}
\setcounter{theorem}{0}

In this section we shall analyze the
quasiclassical limit of
the algebra $W_q^s(G)$. Using results of Section 9 in \cite{S10}
we realize this limit algebra as the algebra of functions on a reduced
Poisson manifold. We assume again that the roots $\gamma_1, \ldots , \gamma_n$ are simple or that  the set $\gamma_1, \ldots , \gamma_n$ is empty.

Denote by $\chi^{s}$ the character of the Poisson subalgebra ${\mathbb{C}}[M_-]$ such that
$\chi^{s}(p(x))=\chi_q^{s}(x)~(\mbox{mod }~(q^{\frac{1}{2d}}-1))$ for every $x\in {\mathbb{C}}_{\mathcal{A}'}[M_-]$.

Note that under the projection $p:{\mathbb{C}}_{\mathcal{A}'}[G^*]\rightarrow {\mathbb{C}}_{\mathcal{A}'}[G^*]/(1-q^{\frac{1}{2d}}){\mathbb{C}}_{\mathcal{A}'}[G^*]$ and the canonical projection $U_{\mathcal{A}'}^s(\m_+)\rightarrow U_{\mathcal{A}'}^s(\m_+)/U_{\mathcal{A}'}^s(\m_+)(1-q^{\frac{1}{2d}})=U(\m_+)$ the right adjoint action of $U_{\mathcal{A}'}^s(\m_+)$ on ${\mathbb{C}}_{\mathcal{A}'}[G^*]$ induces the right infinitesimal dressing action of $U(\m_+)$ on ${\mathbb{C}}[G^*]$, and the image of the algebra ${\mathbb{C}}_{\mathcal{A}'}[G_*]$ under the projection $p$ is a certain subalgebra of ${\mathbb{C}}[G^*]$ that we denote by ${\mathbb{C}}[G_*]$. By definition we have ${\mathbb{C}}[G_*]\simeq {\mathbb{C}}[G]$ as algebras.
Let $I=p(I_q)$ be the ideal in ${\mathbb{C}}[G^*]$ generated by the kernel of $\chi^{s}$.
Then by the discussion after formula (\ref{qmainactcl}) the Poisson algebra $W^s(G)=W^s_q(G)/(q^{\frac{1}{2d}}-1)W^s_q(G)$ is the subspace of all $x+I\in Q_{1}$, $Q_{1}=Q_{\mathcal{A}'}/(1-q^{\frac{1}{2d}})Q_{\mathcal{A}'}\subset {\mathbb{C}}[G^*]/I$, such that $\{m,x\}\in I$ for any $m\in {\mathbb{C}}[M_-]$, and the Poisson bracket in $W^s(G)$ takes the form $\{(x+I),(y+I)\}=\{x,y\}+I$, $x+I,y+I\in W^s(G)$. We shall also write $W^s(G)=({\mathbb{C}}[G^*]/I)^{{\mathbb{C}}[M_-]}\cap Q_{1}$, where
the action of the Poisson algebra ${{\mathbb{C}}[M_-]}$ on the space ${\mathbb{C}}[G^*]/I$ is defined as follows
\begin{equation}\label{mainactcl}
x\cdot (v+I) =\rho_{\chi^{s}}(\{x,v\} ),
\end{equation}
$v\in {\mathbb{C}}[G^*]$ is any representative of $v+I\in {\mathbb{C}}[G^*]/I$ and $x\in {\mathbb{C}}[M_-]$.

We shall describe the space of invariants $({\mathbb{C}}[G^*]/I)^{{\mathbb{C}}[M_-]}$  with respect to this action by analyzing ``dual geometric objects''. First observe that algebra $({\mathbb{C}}[G^*]/I)^{{\mathbb{C}}[M_-]}$ is a particular
example of the
reduced Poisson algebra introduced in Lemma 8.1 in \cite{S10}.

Indeed, recall that according to (\ref{fact}) any element $(L_+,L_-)\in G^*$ may be uniquely written as
\begin{equation}\label{fact1}
(L_+,L_-)=(h_+,h_-)(n_+,n_-),
\end{equation}
where $n_\pm \in N_\pm$, $h_+=exp(({1 \over 1-s}P_{{\h'}}+\frac{1}{2}P_{{\h'}^\perp})x),~h_-=exp(({s \over 1-s}P_{{\h'}}-\frac{1}{2}P_{{\h'}^\perp})x),~x\in
{\frak h}$.

Formula (\ref{fact}) and decomposition of $N_-$ into products of one--dimensional subgroups corresponding to roots also imply that every element $L_-$ may be
represented in the form
\begin{equation}\label{lm}
\begin{array}{l}
L_- = exp\left[ \sum_{i=1}^lb_i({s \over 1-s}P_{{\h'}}-\frac{1}{2}P_{{\h'}^\perp})H_i\right]\times \\
\prod_{\beta}
exp[b_{\beta}X_{-\beta}],~b_i,b_\beta\in {\Bbb C},
\end{array}
\end{equation}
where the product over roots is taken in the same order as in the normal ordering associated to $s$.

Now define a map $\mu_{M_+}:G^* \rightarrow M_-$ by
\begin{equation}\label{mun}
\mu_{M_+}(L_+,L_-)=m_-,
\end{equation}
where for $L_-$ given by (\ref{lm}) $m_-$ is defined as follows
$$
m_-=\prod_{\beta\in \Delta_{\m_+}}
exp[b_{\beta}X_{-\beta}],
$$
and the product over roots is taken in the same order as in the normally ordered segment $\Delta_{\m_+}$.

By definition $\mu_{M_+}$ is a morphism of algebraic
varieties.
We also note that by definition ${\mathbb{C}}[M_-]=\{ \varphi\in {\mathbb{C}}[G^*]:\varphi=
\varphi(m_-)\}$. Therefore ${\mathbb{C}}[M_-]$ is generated by the pullbacks of
regular functions on $M_-$ with respect to the map $\mu_{M_+}$.
Since ${\mathbb{C}}[M_-]$ is a Poisson subalgebra in ${\mathbb{C}}[G^*]$, and  regular
functions
on $M_-$ are dense in $C^\infty(M_-)$ on every compact subset, we can equip the
manifold $M_-$ with
the Poisson structure in such a way that $\mu_{M_+}$ becomes a Poisson mapping.

Let $u$ be the element defined by
\begin{equation}\label{defu}
u=\prod_{i=1}^{l'}exp[t_{i} X_{-\gamma_i}]~ \in M_-,t_{i}=k_{i}~({\rm mod}~(q^{\frac{1}{2d}}-1)),
\end{equation}
where the product over roots is taken in the same order as in the normally ordered segment $\Delta_{\m_+}$.

Denote by $p:{\mathbb{C}}_{\mathcal{A}'}[G^*] \rightarrow {\mathbb{C}}_{\mathcal{A}'}[G^*]/(q^{\frac{1}{2d}}-1){\mathbb{C}}_{\mathcal{A}'}[G^*]={\mathbb{C}}[G^*]$ the
canonical projection. By Proposition \ref{quantreg} the elements $
L^{\pm,{V}}=(p\otimes p_V)({^q{L^{\pm,V}}})$ belong to the space $
{\mathbb{C}}[G^*]\otimes {\rm End}\overline{V}$, where $p_V:V\rightarrow \overline{V}=V/(q^{\frac{1}{2d}}-1)V$ is the projection of finite--dimensional $U_\mathcal{A}^{s}({\frak g})$--module $V$ onto the corresponding $\g$--module $\overline{V}$, and the map
$$
 {\mathbb{C}}_{\mathcal{A}'}[G^*]/(q^{\frac{1}{2d}}-1){\mathbb{C}}_{\mathcal{A}'}[G^*] \rightarrow {\mathbb{C}}[G^*],~
L^{\pm,V}\mapsto {L^{\pm,\overline{V}}}
$$
is an isomorphism.
In particular, from (\ref{rmatrspi}) it follows that
\begin{equation}\label{lv}
\begin{array}{l}
L^{-,\overline{V}}=(p\otimes id)exp\left[ \sum_{i=1}^lhH_i\otimes \pi_{\overline{V}}((-{2s \over 1-s}P_{{\h'}}+P_{{\h'}^\perp})Y_i)\right]\times \\
\prod_{\beta}
exp[p((1-q_\beta^{-2})e_{\beta}) \otimes
\pi_{\overline{V}}(X_{-\beta})].
\end{array}
\end{equation}

From (\ref{lv}) and the definition of $\chi^s$ we obtain that $\chi^s(\varphi)=\varphi (u)$
for every $\varphi \in {\mathbb{C}}[M_-]$. $\chi^s$ naturally extends to a character
of the
Poisson algebra $C^\infty(M_-)$.

Now applying Lemma 8.1 in \cite{S10} we can
define a
reduced Poisson algebra $C^\infty(\mu_{M_+}^{-1}(u))^{C^\infty(M_-)}$ as follows (see also
Remark 8.4 in \cite{S10}).
Denote by $I_u$ the ideal in $C^\infty(G^*)$ generated by elements
$\mu_{M_+}^*\psi,~\psi \in C^\infty(M_-),
~\psi(u)=0$. Let $P_u:C^\infty(G^*)\rightarrow
C^\infty(G^*)/I_u=C^\infty(\mu_{M_+}^{-1}(u))$ be the
canonical projection. Define the action of $C^\infty(M_-)$ on
$C^\infty(\mu_{M_+}^{-1}(u))$ by
\begin{equation}\label{actred}
\psi\cdot \varphi=P_u(\{ \mu_{M_+}^*\psi, \tilde \varphi\}),
\end{equation}
where $\psi \in C^\infty(M_-),~\varphi \in C^\infty(\mu_{M_+}^{-1}(u))$ and
$\tilde \varphi \in C^\infty(G^*)$
is a representative of $\varphi$ such that $P_u\tilde \varphi=\varphi$.
The reduced Poisson algebra $C^\infty(\mu_{M_+}^{-1}(u))^{C^\infty(M_-)}$ is the algebra of $C^\infty(M_-)$--invariants in $C^\infty(\mu_{M_+}^{-1}(u))$ with respect to action (\ref{actred}). The reduced Poisson algebra is naturally equipped with a Poisson structure induced from $C^\infty(G^*)$.

\begin{lemma}\label{redreg}
$\mu_{M_+}^{-1}(u)$ is a subvariety in $G^*$. Let $\overline{q(\mu_{M_+}^{-1}(u))}$  be the closure of $q(\mu_{M_+}^{-1}(u))$ in $G$  with respect to Zariski topology. Then the algebra
$W^s(G)$ is isomorphic to the algebra of regular functions on $\overline{q(\mu_{M_+}^{-1}(u))}$
 pullbacks of which under the map $q$ are invariant with respect to the action
(\ref{actred}) of
$C^\infty(M_-)$ on $C^\infty(\mu_{M_+}^{-1}(u))$, i.e.
$$
W^s(G)={\mathbb{C}}[\overline{q(\mu_{M_+}^{-1}(u))}]\cap
C^\infty(\mu_{M_+}^{-1}(u))^{C^\infty(M_-)},
$$
where ${\mathbb{C}}[\overline{q(\mu_{M_+}^{-1}(u))}]$ is regarded as a subalgebra in $C^\infty(\mu_{M_+}^{-1}(u))$ using the map $q^*:C^\infty(q(\mu_{M_+}^{-1}(u)))\rightarrow C^\infty(\mu_{M_+}^{-1}(u))$ and the imbedding ${\mathbb{C}}[\overline{q(\mu_{M_+}^{-1}(u))}]\subset C^\infty(q(\mu_{M_+}^{-1}(u)))$.
\end{lemma}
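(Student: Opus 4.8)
The plan is to rewrite the algebraic description $W^s(G)=({\mathbb{C}}[G^*]/I)^{{\mathbb{C}}[M_-]}\cap Q_1$ recalled just before the lemma in the Poisson--geometric language of (\ref{actred}), reading off everything from the explicit coordinates on $G^*$ furnished by (\ref{fact1}) and (\ref{lm}).

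Since $\mu_{M_+}$ is a morphism of affine varieties and $\{u\}$ is closed, $\mu_{M_+}^{-1}(u)$ is a closed subvariety of $G^*$, which is the first assertion. By (\ref{fact1}), (\ref{lm}) and the definition of $\mu_{M_+}$, the coordinates $x\in\h$ and $b_\beta$, $\beta\in\Delta_+$, identify $G^*$ with an affine variety on which $\mu_{M_+}$ is the projection forgetting the coordinates $b_\beta$ with $\beta\notin\Delta_{\m_+}$; in particular $G^*\cong\mu_{M_+}^{-1}(u)\times M_-$ with $\mu_{M_+}$ the second projection. Hence ${\mathbb{C}}[M_-]\subset{\mathbb{C}}[G^*]$ is a polynomial subalgebra, ${\rm Ker}\,\chi^s$ is the maximal ideal of the point $u$, the ideal $I$ it generates in ${\mathbb{C}}[G^*]$ is the full vanishing ideal of $\mu_{M_+}^{-1}(u)$, so ${\mathbb{C}}[G^*]/I={\mathbb{C}}[\mu_{M_+}^{-1}(u)]$ and $I=I_u\cap{\mathbb{C}}[G^*]$; consequently $\rho_{\chi^s}$ is the restriction of $P_u$, and ${\mathbb{C}}[G^*]/I$ embeds into $C^\infty(\mu_{M_+}^{-1}(u))$.

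I would then identify $Q_1$. The elements $L^{\pm,\overline{V}}$ are the matrix entries of $\pi_{\overline{V}}(L_\pm)$, so by (\ref{lv}) the quasiclassical limit $L^{\overline{V}}$ of ${^q{L^{V}}}={^q{L^{-,V}}}{^q{L^{+,V}}}^{-1}$ consists of the matrix entries of $\pi_{\overline{V}}(L_-)\pi_{\overline{V}}(L_+)^{-1}=\pi_{\overline{V}}(q(L_+,L_-))$; thus ${\mathbb{C}}[G_*]=q^*{\mathbb{C}}[G]$ inside ${\mathbb{C}}[G^*]$, and $Q_1$, the image of ${\mathbb{C}}[G_*]$ in ${\mathbb{C}}[G^*]/I={\mathbb{C}}[\mu_{M_+}^{-1}(u)]$, is the image of ${\mathbb{C}}[G]$ under pullback along $q:\mu_{M_+}^{-1}(u)\to G$. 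As $G$ is affine, ${\mathbb{C}}[G]$ surjects onto ${\mathbb{C}}[\overline{q(\mu_{M_+}^{-1}(u))}]$, and as $q(\mu_{M_+}^{-1}(u))$ is Zariski dense in its closure the pullback ${\mathbb{C}}[\overline{q(\mu_{M_+}^{-1}(u))}]\to C^\infty(\mu_{M_+}^{-1}(u))$ is injective; therefore $Q_1$ is precisely the copy of ${\mathbb{C}}[\overline{q(\mu_{M_+}^{-1}(u))}]$ in $C^\infty(\mu_{M_+}^{-1}(u))$ described in the statement.

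Finally I would match the two reductions. Both action (\ref{mainactcl}) of a regular $\mu_{M_+}^*\psi$, $\psi\in{\mathbb{C}}[M_-]$, and action (\ref{actred}) of $\psi\in C^\infty(M_-)$ send $v+I$, $v\in{\mathbb{C}}[G^*]$, to the class of $\{\mu_{M_+}^*\psi,v\}$, so by $I=I_u\cap{\mathbb{C}}[G^*]$ the ${\mathbb{C}}[M_-]$--invariance of (\ref{mainactcl}) is the restriction of the $C^\infty(M_-)$--invariance of (\ref{actred}); conversely, for $g\in\mu_{M_+}^{-1}(u)$ the covector $d(\mu_{M_+}^*\psi)|_g=(d\mu_{M_+})_g^*(d\psi|_u)$ depends only on $d\psi|_u$, and the differentials at $u$ of regular functions on the smooth affine variety $M_-$ already span $T_u^*M_-$, so the Hamiltonian vector fields of $\mu_{M_+}^*\psi$ for $\psi\in C^\infty(M_-)$ and for $\psi\in{\mathbb{C}}[M_-]$ agree along $\mu_{M_+}^{-1}(u)$ and the two invariance conditions coincide. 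This gives $({\mathbb{C}}[G^*]/I)^{{\mathbb{C}}[M_-]}={\mathbb{C}}[\mu_{M_+}^{-1}(u)]\cap C^\infty(\mu_{M_+}^{-1}(u))^{C^\infty(M_-)}$, and intersecting with $Q_1={\mathbb{C}}[\overline{q(\mu_{M_+}^{-1}(u))}]\subseteq{\mathbb{C}}[\mu_{M_+}^{-1}(u)]$ and using $W^s(G)=({\mathbb{C}}[G^*]/I)^{{\mathbb{C}}[M_-]}\cap Q_1$ yields the claim. The step that needs the most care is this last one, reconciling the commutator (algebraic) and the Hamiltonian ($C^\infty$) descriptions of the reduction, together with verifying that $q$ genuinely carries ${\mathbb{C}}[G_*]$ onto the regular functions on the Zariski closure $\overline{q(\mu_{M_+}^{-1}(u))}$.
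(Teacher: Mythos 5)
Your proposal is correct and follows essentially the same route as the paper's proof: identify ${\mathbb{C}}[G^*]/I$ with the regular functions on $\mu_{M_+}^{-1}(u)$ via $I={\mathbb{C}}[G^*]\cap I_u$, identify $Q_1$ with ${\mathbb{C}}[\overline{q(\mu_{M_+}^{-1}(u))}]$ using that the image of $q$ is dense in $G$, and then reconcile the algebraic and $C^\infty$ invariance conditions before intersecting with $Q_1$. The only point where you diverge is the last step: the paper deduces $C^\infty(\mu_{M_+}^{-1}(u))^{C^\infty(M_-)}\cong C^\infty(\mu_{M_+}^{-1}(u))^{{\mathbb{C}}[M_-]}$ from density of ${\mathbb{C}}[M_-]$ in $C^\infty(M_-)$ on compact subsets, whereas you argue pointwise that the Hamiltonian vector field of $\mu_{M_+}^*\psi$ along the fiber depends only on $d\psi|_u$, which is already realized by differentials of regular functions; both arguments are valid.
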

\begin{proof}
By definition $\mu_{M_+}^{-1}(u)$ is a subvariety in $G^*$. Next observe that
$I=
{\mathbb{C}}[G^*]\cap I_u$. Therefore by the definition of the algebra ${\mathbb{C}}[G^*]$ and of the map $\mu_{M_+}$ the quotient ${\mathbb{C}}[G^*]/I$ is
identified with the algebra of regular functions on $\mu_{M_+}^{-1}(u)$.

Since ${\mathbb{C}}[M_-]$ is dense in $C^\infty(M_-)$ on every compact subset in
$M_-$ we have:
$$
C^\infty(\mu_{M_+}^{-1}(u))^{C^\infty(M_-)}\cong
C^\infty(\mu_{M_+}^{-1}(u))^{{\mathbb{C}}[M_-]}.
$$

Finally observe that action (\ref{actred}) coincides with action
(\ref{mainactcl}) when restricted to
regular functions, and that the image of the map $q:G^*\rightarrow G$ is open in $G$; its closure coincides with $G$. Therefore by definition $Q_{1}={\mathbb{C}}[\overline{q(\mu_{M_+}^{-1}(u))}]$. Since $Q_1\subset {\mathbb{C}}[G^*]/I$ we have
$W^s(G)=({\mathbb{C}}[G^*]/I)^{{\mathbb{C}}[M_-]}\cap Q_{1}=C^\infty(\mu_{M_+}^{-1}(u))^{{\mathbb{C}}[M_-]}\cap Q_{1}=C^\infty(\mu_{M_+}^{-1}(u))^{C^\infty(M_-)}\cap {\mathbb{C}}[\overline{q(\mu_{M_+}^{-1}(u))}]$.
\end{proof}

We shall realize the
algebra $C^\infty(\mu_{M_+}^{-1}(u))^{C^\infty(M_-)}$ as the algebra of
functions on a reduced
Poisson manifold. In this construction we use
the dressing
action of the Poisson--Lie group $G$ on $G^*$ (see e.g. Proposition 8.2 in \cite{S10}).

Consider the restriction of the dressing action $G^*\times G \rightarrow G^*$ to
the subgroup $M_+\subset G$.
Let $G^*/M_+$ be the quotient of $G^*$ with respect to the dressing action of
$M_+$,
$\pi:G^* \rightarrow G^*/M_+$
the canonical projection. Note that the space $G^*/M_+$ is not a smooth
manifold. However, we will see that the subspace $\pi(\mu_{M_+}^{-1}(u))\subset
G^*/M_+$ is
a smooth manifold. By the results of Section 9 in \cite{S10}  $\mu_{M_+}^{-1}(u)$ is locally stable under the (locally defined) dressing action of $M_+$, and the algebra
$C^\infty(\pi(\mu_{M_+}^{-1}(u)))$
is isomorphic to $C^\infty(\mu_{M_+}^{-1}(u))^{C^\infty(M_-)}$.

Observe that using the map $q:G^*\rightarrow G$, $q(L_+,L_-)=L_-L_+^{-1}$ one can reduce the study of the dressing action to the study of the action of $G$ on itself by conjugations.
This simplifies many geometric problems. Consider the restriction
of this action to the subgroup $M_+$. Denote by $\pi_q:G\rightarrow G/M_+$
the canonical projection onto the quotient with respect to this action.

Next, following \cite{S10}, we explicitly describe the reduced space $\pi_q(\overline{q(\mu_{M_+}^{-1}(u))})$ and the algebra $W^s(G)$ assuming again that the roots $\gamma_1, \ldots , \gamma_n$ are simple or that the set $\gamma_1, \ldots , \gamma_n$ is empty.

First we describe the image of the ``level surface'' $\mu_{M_+}^{-1}(u)$ under the map $q$.
Let $X_\alpha(t)=\exp(tX_\alpha)\in G$, $t\in \mathbb{C}$ be the one--parametric subgroup in the algebraic group $G$ corresponding to root $\alpha\in \Delta$. Recall that for any $\alpha \in \Delta_+$ and any $t\neq 0$ the element $s_\alpha(t)=X_{-\alpha}(t)X_{\alpha}(-t^{-1})X_{-\alpha}(t)\in G$ is a representative for the reflection $s_\alpha$ corresponding to the root $\alpha$. Denote by $s\in G$ the following representative of the Weyl group element $s\in W$,
\begin{equation}\label{defrep}
s=s_{\gamma_1}(t_1)\ldots s_{\gamma_{l'}}(t_{l'}),
\end{equation}
where the numbers $t_{i}$ are defined in (\ref{defu}), and we assume that $t_i\neq 0$ for any $i$.

We shall also use the following representatives for $s^1$ and $s^2$
$$
s^1=s_{\gamma_1}(t_1)\ldots s_{\gamma_{n}}(t_{n}),~s^2=s_{\gamma_{n+1}}(t_{n+1})\ldots s_{\gamma_{l'}}(t_{l'}).
$$

Let $Z$ be the subgroup of $G$ generated by the semisimple part of the Levi subgroup $L$ and by the centralizer of $s$ in $H$. Denote by $N$ the subgroup of $G$ corresponding to the Lie subalgebra $\n$ and by $\overline{N}$ the opposite unipotent subgroup in $G$ with the Lie algebra $\overline{\n}$. By definition we have that $N_+\subset ZN$.

\begin{proposition}{\bf (\cite{S11}, Proposition 7.2)}\label{constrt}
Let $q:G^*\rightarrow G$ be the map defined by
$$
q(L_+,L_-)=L_-L_+^{-1}.
$$
Suppose that the numbers $t_{i}$ defined in (\ref{defu}) are not equal to zero for all $i$. Then $q(\mu_{M_+}^{-1}(u))$ is a subvariety in $NsZN$ and the closure $\overline{q(\mu_{M_+}^{-1}(u))}$ of $q(\mu_{M_+}^{-1}(u))$ with respect to Zariski topology is also contained in $NsZN$.
\end{proposition}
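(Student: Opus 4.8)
The plan is to reduce the statement to an explicit computation in $G$ by using the factorizations already recorded for $G^*$ and for the representative $s$. Recall that $q$ intertwines the dressing action with conjugation, $q(g\circ(L_+,L_-))=g^{-1}L_-L_+^{-1}g$, and that $\mu_{M_+}^{-1}(u)$ is locally stable under the dressing action of $M_+$; this already forces $NsZN$ to be conjugation--stable under $M_+\subset G$ and provides a consistency check, but the real work is a direct computation. I would parametrize $\mu_{M_+}^{-1}(u)$ via (\ref{fact}) and (\ref{lm}): an element of $G^*$ is $(h_+n_+,h_-n_-)$ with $n_\pm\in N_\pm$ and $h_\pm$ the $\h$--valued exponentials of (\ref{fact}), and, writing $n_-$ in the normal ordering of $\Delta_+$ associated to $s$, the condition $(L_+,L_-)\in\mu_{M_+}^{-1}(u)$ says precisely that the block of $n_-$ over the segment $\Delta_{\m_+}$ equals $u$. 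Since the three segments ``$\beta<\gamma_1$'', ``$\beta\in\Delta_{\m_+}$'' and ``$\beta>\gamma_{l'}$'' of that ordering are convex in $\Delta_+$, each of them integrates to a unipotent subgroup, so one can write $n_-=v_1\,u\,v_2$ with $v_1,v_2$ in the corresponding ``initial'' and ``final'' unipotent subgroups $V_1,V_2$.

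Now compute $q(L_+,L_-)=L_-L_+^{-1}=h_-v_1\,u\,v_2\,n_+^{-1}h_+^{-1}$ and move the Cartan factors past the unipotent ones (conjugation by $H$ preserves root subgroups) to bring it to the shape $q(L_+,L_-)=v_1'\,(h_-uh_+^{-1})\,v_2'\,n_+''$ with $v_1'\in V_1$, $v_2'\in V_2$, $n_+''\in N_+$. The proposition then follows once one checks two things. The first is the \emph{core identity}: $h_-uh_+^{-1}$ already lies in $NsZN$. This is exactly what the Cartan exponents $\frac{s}{1-s}P_{\h'}-\frac{1}{2}P_{{\h'}^\perp}$ and $\frac{1}{1-s}P_{\h'}+\frac{1}{2}P_{{\h'}^\perp}$ of (\ref{fact}) were tailored for: combined with the rank--one identity $s_\gamma(t)=X_{-\gamma}(t)X_\gamma(-t^{-1})X_{-\gamma}(t)$ and with $u=\prod_iX_{-\gamma_i}(t_i)$, the product $h_-uh_+^{-1}$ reconstitutes the representative $s=s_{\gamma_1}(t_1)\cdots s_{\gamma_{l'}}(t_{l'})$ of (\ref{defrep}) up to factors in $N$ and in $Z$ (the ${\h'}^\perp$--part of $x$ contributes a factor in $Z$ at once, and the $\h'$--part is handled by the rank--one case propagated through the decomposition $s=s^1s^2$, using that the $\gamma_i$ inside each involution are mutually orthogonal). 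The second is that the leftover factors $v_1',v_2',n_+''$ can be absorbed, modulo $Z$, into the two unipotent factors of $NsZN$: the roots above $\gamma_{l'}$ in the ordering associated to $s$ are exactly the $s$--fixed positive roots $\overline{\Delta}_0\cap\Delta_+$, whose root vectors lie in $\m\subset\z$, so $V_2\subseteq ZN$, while $N_+\subseteq ZN$, which handles $v_2'n_+''$ on the right; and the roots below $\gamma_1$ are sorted by that ordering precisely so that $v_1'$ conjugates into $N$ across $s$, which handles $v_1'$ on the left. Assembling these, and using that $Z$ normalizes $N$ and that the representative $s$ normalizes $Z$, gives $q(L_+,L_-)\in NsZN$.

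For the closure assertion, $q$ is a morphism of affine varieties, so $q(\mu_{M_+}^{-1}(u))$ is constructible; moreover the parametrization above realizes it as a closed subvariety of the locally closed set $NsZN$ — the multiplication map $N\times sZN\to G$ is an isomorphism onto $NsZN$ and none of the factorizations used above degenerates along $\mu_{M_+}^{-1}(u)$, which is the cross--section phenomenon of \cite{S6} — so its Zariski closure in $G$ stays inside $NsZN$. I expect the main obstacle to be exactly the root--combinatorial input behind the second point, together with the bookkeeping in the core identity: one must verify that the normal ordering of Proposition \ref{pord} sorts $\Delta_+$ so that its initial segment consists of the roots that $s$ ``flips into $\n$'' and its final segment of the $s$--fixed roots, and track how the three factors of each $s_{\gamma_i}(t_i)$ interleave with $u$, $v_1$ and $v_2$. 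This is the substance of Section 9 of \cite{S10} and of \cite{S11}, and it is where the choices of normal ordering, of the element $u$, and of the representative $s$ are indispensable.
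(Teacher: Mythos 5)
The paper does not actually prove Proposition \ref{constrt}; it imports it from \cite{S11}, Proposition 7.2. Your overall strategy — factor $(L_+,L_-)$ via (\ref{fact}) and (\ref{lm}), isolate the middle block $u$ of $n_-$, conjugate the Cartan factors through, verify a core identity for $h_-uh_+^{-1}$ against the representative (\ref{defrep}), and absorb the remaining unipotent factors — is the strategy of the cited computation, and the core identity does check out in rank one, where the Cartan exponents of (\ref{fact}) are exactly what is needed to rebuild $s_\gamma(t)$ from $X_{-\gamma}(t)$ up to factors in the two $N$'s.

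As written, however, the argument has genuine gaps at precisely the points where the work lies. First, the absorption step is not routine bookkeeping: $v_1'$ and $v_2'$ are products of exponentials of \emph{negative} root vectors $X_{-\beta}$, $\beta\in\Delta_+$, so they lie in $N_-$, whereas both unipotent factors of $NsZN$ lie in $N_+$ (since $\n\subset\b_+$ is the nilradical of a parabolic containing $\b_+$). Your claim $V_2\subseteq ZN$ therefore needs every $\beta>\gamma_{l'}$ to lie in $\overline{\Delta}_0$ and in the root system of $\l$, so that $X_{-\beta}\in\m\subset\z$; and your treatment of $v_1'$ needs it to be moved past the $N$--factor produced by the core identity \emph{and} past $s$, with control of where $s^{-1}v_1's$ lands relative to $Z$ and $N$. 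Neither assertion follows from the quoted Proposition \ref{pord}, which records only the cardinality of $\Delta_{\m_+}$ and the positions of the $\gamma_i$; the required description of the initial and final segments of the normal ordering is exactly the nontrivial combinatorial content of \cite{S10}, Section 5, and of \cite{S11}, and cannot simply be asserted. Second, the closure assertion is not argued: $NsZN$ is only locally closed in $G$, so $\overline{q(\mu_{M_+}^{-1}(u))}\subseteq NsZN$ requires showing that $q(\mu_{M_+}^{-1}(u))$ is closed in $\overline{NsZN}$, and ``none of the factorizations degenerates'' names the issue rather than resolving it; one must produce explicit regular coordinates, e.g. through the isomorphism $N\times sZN_s\rightarrow NsZN$ of Proposition \ref{crosssect}, exhibiting the image as a closed subvariety. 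Until these two points are supplied the proof is a plausible outline rather than a proof.
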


\begin{proposition}\label{crosssect}
{\bf (\cite{S6}, Propositions 2.1 and 2.2)}
Let $N_s=\{ v \in N|svs^{-1}\in \overline{N} \}$.
Then the conjugation map
\begin{equation}\label{cross}
N\times sZN_s\rightarrow NsZN
\end{equation}
is an isomorphism of varieties. Moreover, the variety $sZN_s$ is a transversal slice to the set of conjugacy classes in $G$.
\end{proposition}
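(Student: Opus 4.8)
The plan is to follow the strategy of Steinberg's cross--section theorem, adapted to the Weyl group element $s$ and to the parabolic data $(\n,\l,\z)$; this is exactly the content of Propositions 2.1 and 2.2 of \cite{S6}. The structural starting point is that $s$ fixes $\overline{\Delta}_0$ pointwise and therefore permutes the complementary set $\Delta\setminus\overline{\Delta}_0=\Delta_{\n}\sqcup(-\Delta_{\n})$, where $\Delta_{\n}$ denotes the set of roots of $\n$. Hence $\n$ splits as a direct sum of two subalgebras, $\n=\n_s'\oplus\n_s$, where $\n_s=\{x\in\n:{\rm Ad}(s)x\in\opn\}$ is spanned by the root vectors $X_\alpha$ with $\alpha\in\Delta_{\n}$ and $s\alpha<0$, and $\n_s'$ by those with $s\alpha\in\Delta_{\n}$; one checks that $\Delta_{\n,s}:=\{\alpha\in\Delta_{\n}:s\alpha<0\}$ and its complement in $\Delta_{\n}$ are closed subsets of $\Delta_{\n}$ (using that $\n$ is an ideal of $\p$), so $N_s=\exp\n_s$ and $N_s'=\exp\n_s'$ are closed connected unipotent subgroups of $N$, and, choosing a convex ordering of $\Delta_{\n}$ with the roots of $\Delta_{\n,s}$ coming last, multiplication defines an isomorphism of varieties $N_s'\times N_s\to N$. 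One also notes that $Z$ normalizes each of $N$, $\opn$, $N_s$ and $N_s'$, because the Lie algebra $\z$ (the semisimple part $\m$ of the Levi together with the $s$--fixed subspace of $\h$) acts on $\n$ commuting with the $s$--action modulo the Levi directions.

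Next I would check that $NsZN$ is stable under the conjugation action of $N$: given $v\in N$ and $n_1szn_2\in NsZN$, one moves $v^{-1}$ past $s$ and $z$, uses the factorization of $sNs^{-1}$ into its $N$-- and $\opn$--parts together with the identity $s(\opn\cap sNs^{-1})s^{-1}\subset N$ (valid since $s^{-1}$ sends the relevant roots back into $\Delta_{\n}$), and rewrites $v(n_1szn_2)v^{-1}$ in the form $N\cdot s\cdot Z\cdot N$. Since $sZN_s\subset sZN\subset NsZN$, the conjugation map $\phi:N\times sZN_s\to NsZN$, $(v,x)\mapsto vxv^{-1}$, is a well--defined $N$--equivariant morphism ($N$ acting by left translation on the first factor and by conjugation on the target); note that $sZN_s\cong Z\times N_s$ via $szn\mapsto(z,n)$ because $Z\cap N=\{1\}$.

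The heart of the argument is that $\phi$ is bijective and \'etale. For injectivity, suppose $v\,sz_1n_1\,v^{-1}=sz_2n_2$ with $v\in N$, $z_i\in Z$, $n_i\in N_s$; writing $v=v'v''$ with $v'\in N_s'$, $v''\in N_s$ and pushing $v'$ through $s$ (using $sN_s's^{-1}\subset N$) reduces matters to the case $v\in N_s$, and then a Bruhat--type uniqueness argument, comparing the $N$--components of the two sides once everything is multiplied out, forces $v=1$, $z_1=z_2$, $n_1=n_2$. For surjectivity I would take a general element $n_1szn_2\in NsZN$, write $n_2=n_2'n_2''$ with $n_2'\in N_s'$, $n_2''\in N_s$, and move $z$ and then $s$ past $n_2'$ (again via $sN_s's^{-1}\subset N$) to bring it to the form $\tilde n\,sz\,n_2''$ with $\tilde n\in N$, $z\in Z$, $n_2''\in N_s$; then one removes $\tilde n$ by conjugating by a suitable element of $N$ constructed inductively, peeling off the root subgroups making up $\tilde n$ in the chosen order and checking at each step that the correction terms are absorbed on the left into $N$ and on the right into $ZN_s$. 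For \'etaleness one computes the differential: by $N$--equivariance it is enough to treat the points $(1,x)$, where $d\phi_{(1,x)}$ takes the translated form $(\xi,\eta)\mapsto(1-{\rm Ad}(x))\xi+\eta$ with $\xi\in\n$ and $\eta\in\z\oplus\n_s$; injectivity of this map follows from the root--space bookkeeping above, and one verifies directly that $\dim NsZN=\dim N+\dim Z+\dim N_s=\dim(N\times sZN_s)$ (the image of the differential of $N\times Z\times N\to NsZN$ is ${\rm Ad}(s^{-1})\n+\z+\n$, of this dimension), so that $d\phi$ is an isomorphism everywhere. A bijective \'etale morphism of complex varieties is an isomorphism; hence the conjugation map is an isomorphism of varieties and, in particular, $NsZN$ is smooth.

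Finally, for transversality of $sZN_s$ to the conjugacy classes in $G$, I would show that at every point $x\in sZN_s$ one has $\g=(1-{\rm Ad}(x))\g+(\z\oplus\n_s)$, after the appropriate left translation; this says precisely that the orbit map $g\mapsto gxg^{-1}$ is submersive onto a complement of the tangent space to $sZN_s$ at $x$, i.e. that $sZN_s$ is transversal to the class through $x$. The inclusion of the $\n$--part, $(1-{\rm Ad}(x))\n\cap(\z\oplus\n_s)=0$ with the two summands spanning the tangent space to $NsZN$ at $x$, is exactly the infinitesimal content of the cross--section isomorphism already proved; one then extends from $\n$ to $\g$ by adding the $\opn$-- and Levi/Cartan--directions and checking, by weight bookkeeping using the explicit shape $x=szn$, that together with $(1-{\rm Ad}(x))\n$ and $\z\oplus\n_s$ they fill $\g$. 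In my view the main obstacle is the surjectivity step: making the inductive removal of $\tilde n$ precise, so that no correction term leaves $N\cdot sZN_s\cdot N$, is where the choice of the normal ordering associated to $s$ and the closure properties of $\Delta_{\n,s}$ and $\Delta_{\n}\setminus\Delta_{\n,s}$ are really used; the transversality computation, though somewhat lengthy, is then routine linear algebra on tangent spaces.
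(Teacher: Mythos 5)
First, note that the paper offers no proof of this proposition: it is imported verbatim from \cite{S6}, Propositions 2.1 and 2.2, so the only meaningful comparison is with the argument there. Your sketch does follow the same Steinberg-type cross--section strategy as the source (the splitting $N=N_s'N_s$ by the sign of $s\alpha$, closedness of the two root subsets, normalization by $Z$, the dimension count $\dim NsZN=\dim N+\dim Z+\dim N_s$, and the plan ``bijective $+$ \'etale $\Rightarrow$ isomorphism''). These structural preliminaries are correct. (One small remark: stability of $NsZN$ under $N$--conjugation is immediate from $v\,n_1szn_2\,v^{-1}=(vn_1)\,sz\,(n_2v^{-1})$; no factorization of $sNs^{-1}$ is needed there.)

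However, the proof has genuine gaps precisely at the load--bearing points. For injectivity, the reduction ``push $v'\in N_s'$ through $s$ using $sN_s's^{-1}\subset N$'' is miswired: to move $v'$ from the left of $s$ to the right you must control $s^{-1}v's$, and the inclusion you invoke controls the opposite conjugation. The workable route is to rewrite the equation as $s^{-1}vs=z_2n_2vn_1^{-1}z_1^{-1}\in ZN$ and use uniqueness in the big cell $\overline{N}\times P\to \overline{N}P$ together with the factorization $s^{-1}Ns=(s^{-1}Ns\cap\overline{N})(s^{-1}Ns\cap N)$; but this only yields $v\in N\cap sNs^{-1}$ and $z_1=z_2$, and forcing $v=1$ from there still requires the inductive descent that you do not supply. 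For surjectivity, after the legitimate reduction to $\tilde n\,sz\,n_2''$ you propose to ``remove $\tilde n$ by conjugating by a suitable element of $N$ constructed inductively'' --- but that statement \emph{is} the theorem (every $N$--orbit in $NsZN$ meets $sZN_s$), and the induction on the descending central series of $N$, with the verification that the correction terms stay in the allowed root subgroups, is the actual content of \cite{S6}, Proposition 2.1; as written nothing has been proved. Finally, transversality to the full set of $G$--conjugacy classes does not follow from the cross--section isomorphism, which only gives transversality to the $N$--orbits inside $NsZN$: one must separately show that $(1-{\rm Ad}(x))$ applied to $\opn$ and to a complement of $\z$ in $\l$ supplies the directions complementary to $T_x(NsZN)$, and this weight computation (the substance of \cite{S6}, Proposition 2.2) is not ``routine'' and is exactly where the choice of the system of positive roots associated to $s$ enters again. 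In short: right strategy, but the three decisive steps are asserted rather than proved, and the injectivity reduction as stated would fail.
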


\begin{theorem}\label{var}
Suppose that the numbers $t_{i}$ defined in (\ref{defu}) are not equal to zero for all $i$. Then $\overline{q(\mu_{M_+}^{-1}(u))}$ is invariant under conjugations by elements of $M_+$, the conjugation action of $M_+$ on $\overline{q(\mu_{M_+}^{-1}(u))}$ is free, the quotient $\pi_q(\overline{q(\mu_{M_+}^{-1}(u))})$ is a smooth variety isomorphic to $sZN_s$, $\overline{q(\mu_{M_+}^{-1}(u))}\simeq M_+ \times \pi_q(\overline{q(\mu_{M_+}^{-1}(u))})\simeq M_+ \times sZN_s$, and the algebra ${\mathbb{C}}[\overline{q(\mu_{M_+}^{-1}(u))}]$ is isomorphic to ${\mathbb{C}}[M_+] \otimes {\mathbb{C}}[sZN_s]$.

Assume also that the roots $\gamma_1, \ldots , \gamma_n$ are simple or the set $\gamma_1, \ldots , \gamma_n$ is empty.
Then the Poisson algebra $W^s(G)$ is isomorphic to the Poisson algebra of regular
functions on  \\ $\pi_q(\overline{q(\mu_{M_+}^{-1}(u))})$, $W^s(G)=\mathbb{C}[\pi_q(\overline{q(\mu_{M_+}^{-1}(u))})]=\mathbb{C}[sZN_s]$. Thus the algebra $W_q^s(G)$ is a noncommutative deformation of the algebra of regular functions on the transversal slice $sZN_s$.
\end{theorem}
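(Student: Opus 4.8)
The plan is to reduce the whole statement to an explicit identification of the variety $X:=\overline{q(\mu_{M_+}^{-1}(u))}$ inside $G$; once this is available the group--theoretic and Poisson assertions follow formally from Propositions \ref{constrt}, \ref{crosssect} and Lemma \ref{redreg}. First I would fix the ambient picture. By Proposition \ref{constrt} we have $X\subset NsZN$, and by Proposition \ref{crosssect} the conjugation map $c:N\times sZN_s\to NsZN$, $c(n,y)=nyn^{-1}$, is an isomorphism of varieties. Since $\m_+\subset\n$, so that $M_+\subset N$ (this is built into the normal ordering associated to $s$, cf. the length formula (\ref{dimm})), for $m\in M_+$ one has $mc(n,y)m^{-1}=c(mn,y)$; hence, through $c$, conjugation by $M_+$ on $NsZN$ becomes the free action of $M_+$ on $N\times sZN_s$ by left translations on the first factor, and within $M_+\times sZN_s$ its orbits are $M_+\times\{y\}$. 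Moreover $X$ is stable under conjugation by $M_+$: by the discussion preceding Lemma \ref{redreg} (results of Section 9 of \cite{S10}) the level surface $\mu_{M_+}^{-1}(u)$ is locally stable under the dressing action of $M_+$ on $G^*$, and since $q(g\circ(L_+,L_-))=g^{-1}L_-L_+^{-1}g$ the map $q$ intertwines this dressing action with conjugation by $M_+$ on $G$; therefore $q(\mu_{M_+}^{-1}(u))$, and with it its Zariski closure $X$, is $M_+$--conjugation stable.

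The crux is the identification $X=c(M_+\times sZN_s)$, a refinement of Proposition \ref{constrt}. Its essential content is that every element $L_-L_+^{-1}$ with $(L_+,L_-)\in\mu_{M_+}^{-1}(u)$ is conjugate by an element of $M_+$ into $sZN_s$; this is what the explicit computations of \cite{S6} and \cite{S11} establish, using the factorization (\ref{lm}) of $L_-$ with its $\m_-$--part pinned to $u=\prod_{i=1}^{l'}\exp(t_iX_{-\gamma_i})$, the inclusion $B_+=HN_+\subset HZN$ (recall $N_+\subset ZN$), the representative (\ref{defrep}) $s=s_{\gamma_1}(t_1)\ldots s_{\gamma_{l'}}(t_{l'})$ with $s_\gamma(t)=X_{-\gamma}(t)X_\gamma(-t^{-1})X_{-\gamma}(t)$, and the fact that the roots of $\Delta_{\m_+}$ form the segment $\gamma_1\leq\ldots\leq\gamma_{l'}$ in the normal ordering associated to $s$: commuting the one--parameter subgroups past one another splits off a left $M_+$--factor and leaves a factor lying in $sZN_s$. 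This gives $q(\mu_{M_+}^{-1}(u))\subset c(M_+\times sZN_s)$, hence $X\subset c(M_+\times sZN_s)$ since the right--hand side is closed; combined with the $M_+$--stability of $X$ and the shape of the orbits noted above, $X=c(M_+\times S)$ for a closed $S\subset sZN_s$. That $S=sZN_s$ follows either from the same computation (surjectivity onto a dense subset of $sZN_s$) or from a dimension count: $\dim X=\dim\mu_{M_+}^{-1}(u)=\dim G^*-\dim M_-$, which by (\ref{dimm}) and the structure of $Z$ and $N_s$ equals $\dim M_+ + \dim sZN_s$, so $\dim S=\dim sZN_s$ and, $sZN_s\simeq Z\times N_s$ being irreducible, $S=sZN_s$. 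I expect this group--theoretic manipulation, which has to respect both the normal ordering and the precise shapes of $u$ and of $s$, to be the main obstacle. Granting $X=c(M_+\times sZN_s)$, the geometric claims are immediate: the conjugation action of $M_+$ on $X$ is free, the quotient $\pi_q(X)$ is $sZN_s$, which is smooth (a left translate of $Z\times N_s$), $X\simeq M_+\times sZN_s$ through $c$, and $\mathbb{C}[X]\simeq\mathbb{C}[M_+]\otimes\mathbb{C}[sZN_s]$.

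For the Poisson--algebraic part, where I also use that $\gamma_1,\ldots,\gamma_n$ are simple or that this set is empty, I would feed the above into Lemma \ref{redreg}: $W^s(G)=\mathbb{C}[X]\cap C^\infty(\mu_{M_+}^{-1}(u))^{C^\infty(M_-)}$, and by the discussion before that lemma $C^\infty(\mu_{M_+}^{-1}(u))^{C^\infty(M_-)}\simeq C^\infty(\pi(\mu_{M_+}^{-1}(u)))$, the smooth functions on the $M_+$--dressing quotient. Transporting along $q$, which intertwines the dressing action with conjugation, this is the algebra of $M_+$--conjugation invariant continuous functions on $X$; and since a regular function on $X$ that is $M_+$--invariant on the dense subset $q(\mu_{M_+}^{-1}(u))$ is $M_+$--invariant on all of $X$, we get $W^s(G)=\mathbb{C}[X]^{M_+}$. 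Using $\mathbb{C}[X]\simeq\mathbb{C}[M_+]\otimes\mathbb{C}[sZN_s]$, with $M_+$ acting by left translation on the first factor only, and that left translation of $M_+$ on itself is transitive so $\mathbb{C}[M_+]^{M_+}=\mathbb{C}$, this yields $W^s(G)=\mathbb{C}[sZN_s]=\mathbb{C}[\pi_q(X)]$; the Poisson bracket on $W^s(G)$ is the one induced from $C^\infty(G^*)$ via action (\ref{actred}), which by the proof of Lemma \ref{redreg} coincides on regular functions with the bracket of $W^s(G)$, so the isomorphism is one of Poisson algebras. Finally $W_q^s(G)$ is a noncommutative deformation of $\mathbb{C}[sZN_s]=W^s(G)$ because $W^s(G)=W_q^s(G)/(q^{\frac{1}{2d}}-1)W_q^s(G)$ by the definition of $W^s(G)$ in Section \ref{wpsred}.
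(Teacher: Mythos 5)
Your proposal is correct and follows essentially the same route as the paper: containment of $\overline{q(\mu_{M_+}^{-1}(u))}$ in $NsZN$ via Proposition \ref{constrt}, freeness of the $M_+$--conjugation action via the cross--section isomorphism of Proposition \ref{crosssect}, the dimension count based on formula (\ref{dimm}) to identify the quotient with $sZN_s$, and Lemma \ref{redreg} to pass to $W^s(G)$. The only cosmetic difference is that you compute the invariants as $\mathbb{C}[X]^{M_+}=\mathbb{C}[M_+]^{M_+}\otimes\mathbb{C}[sZN_s]$ directly, whereas the paper routes this step through the isomorphism $C^\infty(\pi(\mu_{M_+}^{-1}(u)))\simeq C^\infty(\mu_{M_+}^{-1}(u))^{C^\infty(M_-)}$ from \cite{S10}; the content is the same.
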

\begin{proof}
As we observed above  $\mu_{M_+}^{-1}(u)$ is locally stable under the (locally defined) dressing action of $M_+$, and hence
$q(\mu_{M_+}^{-1}(u))\subset NsZN$ is (locally) stable under the action of $M_+\subset N$ on $NsZN$ by conjugations. Since the conjugation action of $N$ on $NsZN$ is free the (locally defined) conjugation action of $M_+$ on $q(\mu_{M_+}^{-1}(u))$ is (locally) free as well.

Now observe that by Proposition \ref{constrt} $\overline{q(\mu_{M_+}^{-1}(u))}\subset NsZN$.  Since the conjugation action of $N$ on $NsZN$ is free and regular, and  $\overline{q(\mu_{M_+}^{-1}(u))}$ is closed, the induced action of $M_+$ on $\overline{q(\mu_{M_+}^{-1}(u))}$ is globally defined and is free as well.  Therefore the quotient $\pi_q(\overline{q(\mu_{M_+}^{-1}(u))})$ is a smooth variety.

By Theorem \ref{crosssect} $\pi_q(\overline{q(\mu_{M_+}^{-1}(u))})$ can be regarded as a closed subvariety of the closed variety $sZN_s$.  From formula (\ref{dimm}) for the cardinality $\sharp \Delta_{\m_+}$ of the set $\Delta_{\m_+}$ and from the definition of $\overline{q(\mu_{M_+}^{-1}(u))}$ we deduce that the dimension of the quotient $\pi_q(\overline{q(\mu_{M_+}^{-1}(u))})$ is equal to the dimension of the variety $sZN_s$,
\begin{eqnarray*}
{\rm dim}~\pi_q(\overline{q(\mu_{M_+}^{-1}(u))})={\rm dim}~G-2{\rm dim}~M_+=2D+l-2\sharp \Delta_{\m_+} =2D+l- \\ -2(D-\frac{l(s)-l'}{2}-D_0)=l(s)+2D_0+l-l'={\rm dim}~N_s+{\rm dim}~Z={\rm dim}~sZN_s.
\end{eqnarray*}
Therefore $\pi_q(\overline{q(\mu_{M_+}^{-1}(u))})\simeq sZN_s$, and ${\mathbb{C}}[\overline{q(\mu_{M_+}^{-1}(u))}]\cong {\mathbb{C}}[M_+] \otimes {\mathbb{C}}[sZN_s]$ because the action of $M_+$ on $\overline{q(\mu_{M_+}^{-1}(u))}$ is free, and hence $\overline{q(\mu_{M_+}^{-1}(u))}\simeq M_+ \times \pi_q(\overline{q(\mu_{M_+}^{-1}(u))})\simeq M_+ \times sZN_s$.

Now observe that by Remark 9.5 in \cite{S10} the map
$$
C^\infty(\pi(\mu_{M_+}^{-1}(u)))\rightarrow
C^\infty(\mu_{M_+}^{-1}(u))^{C^\infty(M_-)},~~\psi \mapsto \pi^*\psi
$$
is an isomorphism.
By construction the map $\pi_q: \overline{q(\mu_{M_+}^{-1}(u))}\rightarrow
\pi_q\overline{q(\mu_{M_+}^{-1}(u))}$
is a morphism of varieties.
Therefore the map
$$
\mathbb{C}[\pi_q\overline{q(\mu_{M_+}^{-1}(u))}]\rightarrow {\mathbb{C}}[\overline{q(\mu_{M_+}^{-1}(u))}]\cap
C^\infty(\mu_{M_+}^{-1}(u))^{C^\infty(M_-)},~~\psi \mapsto \pi_q^*\psi
$$
is an isomorphism, where ${\mathbb{C}}[\overline{q(\mu_{M_+}^{-1}(u))}]$ is regarded as a subalgebra in $C^\infty(\mu_{M_+}^{-1}(u))$ using the map $q^*:C^\infty(q(\mu_{M_+}^{-1}(u)))\rightarrow C^\infty(\mu_{M_+}^{-1}(u))$ and the imbedding ${\mathbb{C}}[\overline{q(\mu_{M_+}^{-1}(u))}]\subset C^\infty(q(\mu_{M_+}^{-1}(u)))$.

Finally observe that by Lemma \ref{redreg} the algebra ${\mathbb{C}}[\overline{q(\mu_{M_+}^{-1}(u))}]\cap
C^\infty(\mu_{M_+}^{-1}(u))^{C^\infty(M_-)}$ is isomorphic to $W^s(G)$, and hence $W^s(G)\cong \mathbb{C}[\pi_q\overline{q(\mu_{M_+}^{-1}(u))}]\simeq {\mathbb{C}}[sZN_s]$.
This completes the proof.

\end{proof}

A similar theorem can be proved in case when the roots  $\gamma_{n+1}, \ldots , \gamma_{l'}$ are simple or the set $\gamma_{n+1}, \ldots , \gamma_{l'}$ is empty. In that case instead of the map $q:G^*\rightarrow G$ one should use another map $q':G^*\rightarrow G$, $q'(L_+,L_-)=L_-^{-1}L_+$ which has the same properties as $q$, see \cite{dual}, Section 2.

Theorem \ref{var} implies that the algebra $W^s(G)$ coincides with the deformed Poisson W--algebra introduced in \cite{S6}.

In conclusion we discuss a simple property of the algebra $W_\varepsilon^s(G)$ which allows to construct noncommutative deformations of coordinate rings of singularities arising in the fibers of the conjugation quotient map $\delta_G: G \rightarrow H/W$ generated by the inclusion $\mathbb{C}[H]^W\simeq
\mathbb{C}[G]^G\hookrightarrow \mathbb{C}[G]$, where $H$ is the maximal torus of $G$ corresponding to the Cartan subalgebra
$\h$ and $W$ is the Weyl group of the pair $(G, H)$.

Observe that each central element $z\in Z({\mathbb{C}}_\varepsilon[G_*])$ obviously gives rise to an element $\rho_{\chi_\varepsilon^s}(z)\in Q_\varepsilon$, and since $z$ is central
\begin{eqnarray*}
\rho_{\chi_\varepsilon^s}(z)\in {\rm Hom}_{{\mathbb{C}}_\varepsilon[M_-]}(\mathbb{C}_{\chi_\varepsilon^{s}},{\mathbb{C}}_\varepsilon[G_*]\otimes_{{\mathbb{C}}_\varepsilon[M_-]}
\mathbb{C}_{\chi_\varepsilon^{s}})\bigcap Q_\varepsilon=W_\varepsilon^s(G).
\end{eqnarray*}

The proof of the following proposition is similar to that of Theorem $\rm A_h$ in \cite{S7}.

\begin{proposition}\label{ZWimb}
Let $\varepsilon\in \mathbb{C}$ be generic. Then
the restriction of the linear map $\rho_{\chi_\varepsilon^s}:{\mathbb{C}}_\varepsilon[G_*]\rightarrow Q_\varepsilon$ to the center $Z({\mathbb{C}}_\varepsilon[G_*])$ of ${\mathbb{C}}_\varepsilon[G_*]$ gives rise to an injective homomorphism of algebras,
$$
\rho_{\chi_\varepsilon^s}:Z({\mathbb{C}}_\varepsilon[G_*])\rightarrow W_\varepsilon^s(G).
$$
\end{proposition}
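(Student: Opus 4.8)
The plan is to verify first that the restriction of $\rho_{\chi_\varepsilon^{s}}$ to the center is a unital algebra homomorphism, and then to deduce injectivity by reduction to the quasiclassical limit $\varepsilon=1$, where the statement becomes the elementary geometric fact that a $G$--invariant regular function on $G$ that vanishes on the transversal slice $sZN_s$ vanishes identically. For the algebraic part it is convenient to work over $\mathcal{A}'$ (the argument for a fixed generic $\varepsilon$ being identical, and partly recalled in the excerpt preceding the proposition). If $z\in Z({\mathbb{C}}_{\mathcal{A}'}[G_*])$ then $mz=zm\in I_q$ for every $m\in I_q$, since $I_q$ is a left ideal and $z$ is central, so by Proposition \ref{9.2} one has $\rho_{\chi^{s}_q}(z)=z+I_q\in W_q^s(G)$; moreover $\rho_{\chi^{s}_q}(1)=v_0$ is the unit of $W_q^s(G)$, and by the description of the multiplication in Proposition \ref{9.2}, $\rho_{\chi^{s}_q}(z_1)\rho_{\chi^{s}_q}(z_2)=z_1z_2+I_q=\rho_{\chi^{s}_q}(z_1z_2)$. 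Hence $\rho_{\chi^{s}_q}$ restricts to a homomorphism of $\mathcal{A}'$--algebras $Z({\mathbb{C}}_{\mathcal{A}'}[G_*])\to W_q^s(G)$, and after specialization to a homomorphism $\rho_{\chi_\varepsilon^{s}}:Z({\mathbb{C}}_\varepsilon[G_*])\to W_\varepsilon^s(G)$ for generic $\varepsilon$.

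For injectivity I would exploit that all $\mathcal{A}'$--modules in play are torsion free and that the kernel therefore survives the quasiclassical limit. Indeed $Z({\mathbb{C}}_{\mathcal{A}'}[G_*])\subset U_{\mathcal{A}'}^s(\g)$ is $\mathcal{A}'$--free by the PBW bases of Section \ref{wqreal}, and $W_q^s(G)\subset Q_{\mathcal{A}'}\subset {\mathbb{C}}_{\mathcal{A}'}[G^*]/I_q={\mathbb{C}}_{\mathcal{A}'}[G^*]\otimes_{{\mathbb{C}}_{\mathcal{A}'}[M_-]}\mathbb{C}_{\chi_q^{s}}$ is $\mathcal{A}'$--free since ${\mathbb{C}}_{\mathcal{A}'}[G^*]$ is free as a right ${\mathbb{C}}_{\mathcal{A}'}[M_-]$--module (again by the PBW bases). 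Writing $K$ for the kernel of $\rho_{\chi^{s}_q}$ on $Z({\mathbb{C}}_{\mathcal{A}'}[G_*])$, torsion freeness of $W_q^s(G)$ shows that $K$ is saturated in $Z({\mathbb{C}}_{\mathcal{A}'}[G_*])$ and that $\rho_{\chi^{s}_q}(Z({\mathbb{C}}_{\mathcal{A}'}[G_*]))$ is torsion free; reducing modulo $q^{\frac{1}{2d}}-1$ then gives an embedding of $K/(q^{\frac{1}{2d}}-1)K$ into $Z({\mathbb{C}}_{\mathcal{A}'}[G_*])/(q^{\frac{1}{2d}}-1)Z({\mathbb{C}}_{\mathcal{A}'}[G_*])$ whose image lies in the kernel of the quasiclassical limit $\overline{\rho}$ of $\rho_{\chi^{s}_q}|_{Z}$. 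Consequently, if $\overline{\rho}$ is injective then $K/(q^{\frac{1}{2d}}-1)K=0$, and since $K$ is a torsion free $\mathcal{A}'$--module finitely generated in each degree of the filtration of Section \ref{wqreal}, a standard Nakayama/Krull intersection argument forces $K=0$; specializing, $\rho_{\chi_\varepsilon^{s}}|_{Z}$ is injective for generic $\varepsilon$ (using also that for generic $\varepsilon$ the center of ${\mathbb{C}}_\varepsilon[G_*]=U_\varepsilon^s(\g)^{fin}$ is the specialization of $Z({\mathbb{C}}_{\mathcal{A}'}[G_*])$).

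It remains to identify $\overline{\rho}$ and to prove it injective. Under the quasiclassical projection $p$, central elements of ${\mathbb{C}}_{\mathcal{A}'}[G_*]$ reduce to Poisson--central elements of ${\mathbb{C}}[G_*]\cong{\mathbb{C}}[G]$, i.e. to $G$--invariant functions, and $Z({\mathbb{C}}_{\mathcal{A}'}[G_*])$ is saturated in ${\mathbb{C}}_{\mathcal{A}'}[G_*]$ (as $(q^{\frac{1}{2d}}-1)w$ central forces $w$ central), so its quasiclassical limit embeds into ${\mathbb{C}}[G]^G$. By Lemma \ref{redreg} and Theorem \ref{var}, $\overline{\rho}$ sends such an invariant function $\varphi$ to $q^{*}\varphi|_{\mu_{M_+}^{-1}(u)}$, which under the isomorphism $\overline{q(\mu_{M_+}^{-1}(u))}\simeq M_+\times sZN_s$ is just the restriction of $\varphi$ to the slice $sZN_s$. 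Now by Proposition \ref{crosssect} the conjugation map $G\times sZN_s\to G$ has everywhere surjective differential, hence is an open morphism, so its image — the union of the conjugacy classes meeting $sZN_s$ — is Zariski dense in $G$. An invariant regular function vanishing on a dense union of conjugacy classes vanishes identically, so $\varphi|_{sZN_s}=0$ forces $\varphi=0$, and $\overline{\rho}$ is injective.

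The main obstacle is the bookkeeping of the quasiclassical limit: one has to be certain that $Z({\mathbb{C}}_\varepsilon[G_*])$, $W_\varepsilon^s(G)$ and $\rho_{\chi_\varepsilon^{s}}$ really are the specializations of their $\mathcal{A}'$--forms (which uses torsion freeness throughout) and that the limit map $\overline{\rho}$ is precisely the geometric restriction of invariant functions to $sZN_s$. Once this is established, injectivity reduces to the transversal slice property of $sZN_s$ proved in \cite{S6}, which is the source of the real content of the statement.
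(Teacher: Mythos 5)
Your argument is sound in outline, but it takes a genuinely different route from the one the paper intends: the paper proves this proposition ``as in Theorem $\rm A_h$ of \cite{S7}'', i.e.\ by a representation--theoretic argument (Whittaker vectors in Verma modules and the Harish--Chandra homomorphism), whereas you reduce everything to the quasiclassical limit and to the geometry of the transversal slice. The algebraic half of your proof is fine and matches the paper's formalism exactly: centrality of $z$ gives $mz=zm\in I_q$ for $m\in I_q$ since $I_q$ is a left ideal, so Proposition \ref{9.2} places $\rho_{\chi^s_q}(z)$ in $W_q^s(G)$ and identifies the product as $z_1z_2+I_q$; and your iteration argument for killing the kernel $K$ from $K/(q^{\frac{1}{2d}}-1)K=0$ is the same device the paper itself uses to prove injectivity of $\sigma$ in Proposition \ref{Qprop} (note that $(q^{\frac{1}{2d}}-1)$ is not in the Jacobson radical of $\mathcal{A}'$, so this must be phrased as infinite divisibility in a free module rather than as Nakayama proper). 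The geometric endgame is also correct: transversality of $sZN_s$ (Proposition \ref{crosssect}) makes the conjugation map $G\times sZN_s\to G$ dominant, so a $G$--invariant regular function vanishing on the slice vanishes identically. What your route buys is a conceptual explanation of the proposition as a quantization of the statement that restriction to a transversal slice is injective on invariants; what the paper's route buys is independence from the $q=1$ degeneration and from the explicit geometry of Section \ref{wpsred}.

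Two points need to be shored up before the argument is complete. First, your reduction to $q=1$ only constrains central elements that lie in (or are $\mathcal{A}'$--multiples of elements of) $Z({\mathbb{C}}_{\mathcal{A}'}[G_*])$; to conclude anything about $\rho_{\chi^s_\varepsilon}$ on $Z({\mathbb{C}}_\varepsilon[G_*])$ you must know that for generic $\varepsilon$ the center of ${\mathbb{C}}_\varepsilon[G_*]=U_\varepsilon^s(\g)^{fin}$ is exactly the specialization of $Z({\mathbb{C}}_{\mathcal{A}'}[G_*])$ -- you flag this but do not prove it, and it is not automatic (centers can grow under specialization); it does hold for generic, in particular transcendental, $\varepsilon$ by the quantum Harish--Chandra description of the center, but that input has to be cited or established. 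Second, your proof uses Theorem \ref{var} and Proposition \ref{crosssect}, hence tacitly assumes both the standing hypothesis of Section \ref{wpsred} on the roots $\gamma_1,\ldots,\gamma_n$ and the condition that all $t_i$ in (\ref{defu}) are nonzero; the statement of the proposition imposes neither, so either these hypotheses should be added or the degenerate cases handled separately (the paper's representation--theoretic route does not need them).
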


Now if $\eta:Z({\mathbb{C}}_\varepsilon[G_*])\rightarrow \mathbb{C}$ is a character then from Theorem \ref{var} and the results of Section 6 in \cite{S6} it follows that the algebra $W_\varepsilon^s(G)/W_\varepsilon^s(G){\rm ker}~\eta$ can be regarded as a noncommutative deformation of the algebra of regular functions defined on a fiber of the conjugation quotient map $\delta_G: sZN_s \rightarrow H/W$. In particular, for singular fibers we obtain noncommutative deformations of the coordinate rings of the corresponding singularities.


\section{Skryabin equivalence for equivariant modules over quantum groups}\label{skryabin}

\setcounter{equation}{0}
\setcounter{theorem}{0}

In this section we establish a remarkable equivalence between the category of $W_\varepsilon^s(G)$--modules and a certain category of ${\mathbb{C}}_{\varepsilon}[G_*]$ modules. This equivalence is a quantum group counterpart of Skryabin equivalence established in the Appendix to \cite{Pr}.

Let $J={\rm Ker}~\varepsilon|_{U_{\mathcal{A}'}^s(\m_+)}$ be the augmentation ideal of $U_{\mathcal{A}'}^s(\m_+)$ related to the counit $\varepsilon$ of $U_{\mathcal{A}'}^s(\g)$, and
$\mathbb{C}_{\mathcal{A}'}$ the trivial representation of $U_{\mathcal{A}'}^s(\m_+)$ given by the counit.
Let $V$ be a finitely generated ${\mathbb{C}}_{\mathcal{A}'}[G_*]$--module which satisfies the following conditions:
\begin{enumerate}
\item
$V$ is free as an ${\mathcal{A}'}$--module.

\item
$V$ is a right $U_{\mathcal{A}'}^s(\m_+)$--module with respect to an action ${\rm Ad}$ such that the action of the augmentation ideal $J$ on $V$ is locally nilpotent.

\item
The following compatibility condition holds for the two actions
\begin{equation}\label{compat}
{\rm Ad}x(yv)={\rm Ad}x_2(y){\rm Ad}x_1(v),~x\in U_{\mathcal{A}'}^s(\m_+),~y\in {\mathbb{C}}_{\mathcal{A}'}[G_*],~v\in V,
\end{equation}
where $\Delta_s(x)=x_1\otimes x_2$, ${\rm Ad}x(y)$ is the adjoint action of $x\in U_{\mathcal{A}'}^s(\m_+)$ on $y\in {\mathbb{C}}_{\mathcal{A}'}[G_*]$.

An element $v\in V$ is called a Whittaker vector if
${\rm Ad}xv=\varepsilon(x)v$ for any $x\in U_{\mathcal{A}'}^s(\m_+)$.
The space
\begin{equation}\label{winv}
{\rm Hom}_{U_{\mathcal{A}'}^s(\m_+)}(\mathbb{C}_{\mathcal{A}'},V)={\rm Wh}(V).
\end{equation}
is called the space of Whittaker vectors of $V$.

Consider the induced $U_{\mathcal{A}'}^s(\g)$--module $W=U_{\mathcal{A}'}^s(\g)\otimes_{{\mathbb{C}}_{\mathcal{A}'}[G_*]}V$. Using the adjoint action of $U_{\mathcal{A}'}^s(\g)$ on itself one can naturally extend the adjoint action of $U_{\mathcal{A}'}^s(\m_+)$ from $V$ to $W$ in such a way that compatibility condition $(\ref{compat})$ is satisfied for the natural action of $U_{\mathcal{A}'}^s(\g)$ and the adjoint action $\rm Ad$ of $U_{\mathcal{A}'}^s(\m_+)$ on $W$. As we observed in Section \ref{qplproups} (see formula (\ref{cm1})) $\Delta_s^{opp}(U_{\mathcal{A}'}^s(\m_+))\subset U_{\mathcal{A}'}^s(\b_+)\otimes U_{\mathcal{A}'}^s(\m_+)$. From this using the fact that the elements $\widetilde{e}_\beta=(1-q_\beta^{-2})e_\beta$, $e_\beta\in U_{\mathcal{A}'}^s(\m_+)$, are generators of ${\mathbb{C}}_{\mathcal{A}'}[M_-]$ one immediately deduces that $\Delta_s^{opp}({\mathbb{C}}_{\mathcal{A}'}[M_-])\subset U_{\mathcal{A}'}^s(\b_+)\otimes {\mathbb{C}}_{\mathcal{A}'}[M_-]$. In fact $\Delta_s^{opp}({\mathbb{C}}_{\mathcal{A}'}[M_-])\subset {\mathbb{C}}_{\mathcal{A}'}[B_-]\otimes {\mathbb{C}}_{\mathcal{A}'}[M_-]$ since ${\mathbb{C}}_{\mathcal{A}'}[M_-]\subset {\mathbb{C}}_{\mathcal{A}'}[B_-]$ which is a Hopf algebra.

We shall require that
\item
For any $x\in {\mathbb{C}}_{\mathcal{A}'}[M_-]$ the natural action of the element $(S^{-1}\otimes\chi_q^s)\Delta^{opp}(x)\in {\mathbb{C}}_{\mathcal{A}'}[G^*]$ on $W$ coincides with the adjoint action ${\rm Ad}x$ of $x$ on $W$.

As in the second part of the proof of Proposition \ref{9.2} one can see that the last condition implies that
for any $z\in {\mathbb{C}}_{\mathcal{A}'}[G_*]\cap I_q$ and $v\in {\rm Wh}(V)$ $zv=0$.
\end{enumerate}
Denote by ${{\mathbb{C}}_{\mathcal{A}'}[G_*]-{\rm mod}_{U_{\mathcal{A}'}^s(\m_+)}^{\chi^{s}_q}}_{loc}$ the category of finitely generated ${\mathbb{C}}_{\mathcal{A}'}[G_*]$--module which satisfy conditions 1--4. Morphisms in the category ${{\mathbb{C}}_{\mathcal{A}'}[G_*]-{\rm mod}_{U_{\mathcal{A}'}^s(\m_+)}^{\chi^{s}_q}}_{loc}$ are ${\mathbb{C}}_{\mathcal{A}'}[G_*]$- and $U_{\mathcal{A}'}^s(\m_+)$--module homomorphisms.
We call ${{\mathbb{C}}_{\mathcal{A}'}[G_*]-{\rm mod}_{U_{\mathcal{A}'}^s(\m_+)}^{\chi^{s}_q}}_{loc}$ the category of $(U_{\mathcal{A}'}^s(\m_+),\chi^{s}_q)$--equivariant modules over ${\mathbb{C}}_{\mathcal{A}'}[G_*]$.

Note that the algebra $W_q^s(G)$ naturally acts in the space of Whittaker vectors for any object $V$ of the category ${{\mathbb{C}}_{\mathcal{A}'}[G_*]-{\rm mod}_{U_{\mathcal{A}'}^s(\m_+)}^{\chi^{s}_q}}_{loc}$. Indeed, if $w,w'\in {\mathbb{C}}_{\mathcal{A}'}[G_*]$ are two representatives of an element from $W_q^s(G)$ then $w-w'\in {\mathbb{C}}_{\mathcal{A}'}[G_*]\cap I_q$, and hence for any $v\in {\rm Wh}(V)$ $wv=w'v$. Moreover, by the definition of the algebra $W_q^s(G)$ and by condition (\ref{compat}) we have
$$
{\rm Ad}x(wv)={\rm Ad}x_2(w){\rm Ad}x_1(v)={\rm Ad}x_2(w)\varepsilon(x_1)v={\rm Ad}x(w)v=\varepsilon(x)wv.
$$
Therefore $wv$ is a Whittaker vector independent of the choice of the representative $w$.

\begin{proposition}\label{Qprop}
For any  finitely generated $W_q^s(G)$--module $E$ which is free as an  ${\mathcal{A}'}$--module the space $Q_{\mathcal{A}'}\otimes_{W_q^s(G)}E$ is an object in ${{\mathbb{C}}_{\mathcal{A}'}[G_*]-{\rm mod}_{U_{\mathcal{A}'}^s(\m_+)}^{\chi^{s}_q}}_{loc}$, and
$$
{\rm Wh}(Q_{\mathcal{A}'}\otimes_{W_q^s(G)}E)={\rm Hom}_{U_{\mathcal{A}'}^s(\m_+)}(\mathbb{C}_{\mathcal{A}'},Q_{\mathcal{A}'}\otimes_{W_q^s(G)}E)=E.
$$
\end{proposition}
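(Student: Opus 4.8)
The plan is to reduce the whole statement to one structural fact about $Q_{\mathcal{A}'}$ as a bimodule, and then read off conditions 1--4 and the computation of Whittaker vectors from it. By Proposition~\ref{Qpr}, $Q_{\mathcal{A}'}$ is a $({\mathbb{C}}_{\mathcal{A}'}[G_*],W_q^s(G))$--bimodule which moreover carries the right $U_{\mathcal{A}'}^s(\m_+)$--action $\rm Ad$; the latter commutes with the right $W_q^s(G)$--action and satisfies~(\ref{adm4}). Since $E$ is an ordinary left $W_q^s(G)$--module (equivalently, one carrying the trivial $\rm Ad$--action), the left ${\mathbb{C}}_{\mathcal{A}'}[G_*]$--action and the $\rm Ad$--action descend to $V:=Q_{\mathcal{A}'}\otimes_{W_q^s(G)}E$ through the first tensor factor, and~(\ref{adm4}) survives verbatim, so compatibility condition~(\ref{compat}), i.e. condition~3, is automatic. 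Finite generation of $V$ over ${\mathbb{C}}_{\mathcal{A}'}[G_*]$ is also immediate: $Q_{\mathcal{A}'}$ is cyclic over ${\mathbb{C}}_{\mathcal{A}'}[G_*]$, generated by the image $v_0$ of $1$, and the right $W_q^s(G)$--action is implemented by multiplication with representatives in ${\mathbb{C}}_{\mathcal{A}'}[G_*]$, so $v_0\otimes e_1,\dots,v_0\otimes e_m$ generate $V$ whenever $e_1,\dots,e_m$ generate $E$.

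The structural fact I would prove is that, as a $(U_{\mathcal{A}'}^s(\m_+),W_q^s(G))$--bimodule, $Q_{\mathcal{A}'}$ is coinduced from $W_q^s(G)={\rm Wh}(Q_{\mathcal{A}'})$ (the identification being~(\ref{QW})): there is an $\rm Ad$--stable free $\mathcal{A}'$--submodule $P\subset Q_{\mathcal{A}'}$ with $v_0\in P$ such that multiplication gives an isomorphism $P\otimes_{\mathcal{A}'}W_q^s(G)\simeq Q_{\mathcal{A}'}$ with $\rm Ad$ concentrated on the first factor; equivalently $Q_{\mathcal{A}'}\simeq{\rm Hom}_{\mathcal{A}'}^{{\rm fin}}(U_{\mathcal{A}'}^s(\m_+),W_q^s(G))$, the $\mathcal{A}'$--linear maps killed by a power of the augmentation ideal $J$. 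This is the quantum--group counterpart of the cross--section theorem, and I would obtain it from the behaviour at $q=1$: by Theorem~\ref{var}, reduction modulo $(q^{\frac{1}{2d}}-1)$ sends $Q_{\mathcal{A}'}$ to $Q_1\simeq{\mathbb{C}}[M_+]\otimes{\mathbb{C}}[sZN_s]={\mathbb{C}}[M_+]\otimes W^s(G)$ with the conjugation action of the unipotent group $M_+$ on the first factor (Propositions~\ref{constrt} and~\ref{crosssect}), and ${\mathbb{C}}[M_+]\simeq{\rm hom}_{\mathbb{C}}^{{\rm fin}}(U(\m_+),\mathbb{C})$ as an $\m_+$--module because $M_+$ is unipotent; one then lifts a homogeneous $\mathcal{A}'$--basis of $P$ from this classical picture, using that $Q_{\mathcal{A}'}$ is free over $\mathcal{A}'$, that the $\rm Ad$--action of $U_{\mathcal{A}'}^s(\m_+)$ is locally finite, and that reduction modulo $(q^{\frac{1}{2d}}-1)$ is exact. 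Granting this, $V\simeq P\otimes_{\mathcal{A}'}E$ is free over $\mathcal{A}'$, which is condition~1; and the $\rm Ad$--action of $J$ on $P$ is locally nilpotent — at $q=1$ this is the local nilpotence of the $\m_+$--action on ${\mathbb{C}}[M_+]$, $M_+$ being unipotent, and it lifts; it is also visible directly from~(\ref{qqq}), since on $Q_{\mathcal{A}'}$ the operator ${\rm Ad}e_{\beta_k}$ strictly decreases a well--ordered filtration invariant modulo the contribution of the earlier $\widetilde e_{\beta_r}$, $\beta_r<\beta_k$ — so every element of the finite tensor sum $P\otimes_{\mathcal{A}'}E$ is killed by a common power of $J$, which is condition~2.

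For condition~4 I note that $W=U_{\mathcal{A}'}^s(\g)\otimes_{{\mathbb{C}}_{\mathcal{A}'}[G_*]}V\simeq\bigl(U_{\mathcal{A}'}^s(\g)\otimes_{{\mathbb{C}}_{\mathcal{A}'}[G_*]}Q_{\mathcal{A}'}\bigr)\otimes_{W_q^s(G)}E$ by associativity of the tensor product, and that both the extended action ${\rm Ad}x$ (which acts diagonally on $U_{\mathcal{A}'}^s(\g)\otimes_{{\mathbb{C}}_{\mathcal{A}'}[G_*]}Q_{\mathcal{A}'}$ via the coproduct, leaving $E$ untouched) and multiplication by $(S^{-1}\otimes\chi_q^s)\Delta^{opp}(x)\in{\mathbb{C}}_{\mathcal{A}'}[G^*]$ act through the first tensor factor; so the claim reduces to the identity of these two operators on $U_{\mathcal{A}'}^s(\g)\otimes_{{\mathbb{C}}_{\mathcal{A}'}[G_*]}Q_{\mathcal{A}'}$, which is exactly the computation carried out in the second part of the proof of Proposition~\ref{9.2}: from~(\ref{cm1}) one gets $S_s^{-1}(e_{\beta_k})=-e^{-h(\cdots)\beta_k^\vee}e_{\beta_k}+y$ with $y\in U_{\mathcal{A}'}^s(\b_+)I_{<\beta_k}$, and since $z\,v_0=\rho_{\chi_q^s}(z)=0$ for every $z\in{\mathbb{C}}_{\mathcal{A}'}[G_*]\cap I_q$ one reads off the asserted equality on the algebra generators $\widetilde e_\beta$, $\beta\in\Delta_{\m_+}$, of ${\mathbb{C}}_{\mathcal{A}'}[M_-]$; both assignments being algebra antihomomorphisms ${\mathbb{C}}_{\mathcal{A}'}[M_-]\to{\rm End}(W)$ ($\rm Ad$ because it is a right action, the other because $(S^{-1}\otimes\chi_q^s)\Delta^{opp}$ is an algebra antihomomorphism), checking on generators suffices.

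Finally, the Whittaker vectors. Under $V\simeq P\otimes_{\mathcal{A}'}E$ with $\rm Ad$ on the first factor one has ${\rm Wh}(V)=P^{\rm Ad}\otimes_{\mathcal{A}'}E$, and $P^{\rm Ad}=\mathcal{A}'\cdot v_0$ because at $q=1$ it reduces to ${\mathbb{C}}[M_+]^{M_+}=\mathbb{C}$ — here the hypothesis that the numbers $t_i$ of~(\ref{defu}) are nonzero, i.e. the nondegeneracy of $\chi_q^s$ on the simple directions of $\Delta_{\m_+}$, is precisely what cuts the twisted invariants down to the constants — and this passes to $\mathcal{A}'$ using freeness over $\mathcal{A}'$ together with the local nilpotence just established. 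Hence the natural map $E=W_q^s(G)\otimes_{W_q^s(G)}E={\rm Wh}(Q_{\mathcal{A}'})\otimes_{W_q^s(G)}E\longrightarrow{\rm Wh}(Q_{\mathcal{A}'}\otimes_{W_q^s(G)}E)$, $e\mapsto v_0\otimes e$, is an isomorphism, which is the second assertion. The step I expect to be the main obstacle is the coinduced description of $Q_{\mathcal{A}'}$ as a $(U_{\mathcal{A}'}^s(\m_+),W_q^s(G))$--bimodule: it packages the cross--section theorem of Propositions~\ref{constrt} and~\ref{crosssect} at the quantum level, and transporting it from the classical limit to the ring $\mathcal{A}'$ requires exactly the careful control of the specialization $q\to1$ that the introduction singles out as the technical heart of the paper.
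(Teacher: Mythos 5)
Your reduction of conditions 1--4 to properties of $Q_{\mathcal{A}'}$ as a bimodule, and your treatment of condition 4 via the generator computation from Proposition \ref{9.2}, are in the spirit of the paper. But the central step of your argument --- the ``structural fact'' that $Q_{\mathcal{A}'}\simeq{\rm hom}_{\mathcal{A}'}(U_{\mathcal{A}'}^s(\m_+),W_q^s(G))$ as a $(U_{\mathcal{A}'}^s(\m_+),W_q^s(G))$--bimodule \emph{over} $\mathcal{A}'$ --- is a genuine gap, and the route you sketch for it (lift an $\rm Ad$--stable basis of $P$ from the $q=1$ fibre) does not work as stated. $Q_{\mathcal{A}'}$ is free of infinite rank over $\mathcal{A}'$ and $\mathcal{A}'$ is not local, so no Nakayama--type argument lets you promote a basis of $Q_1$ to a basis over $\mathcal{A}'$, and surjectivity of a map of infinite--rank free $\mathcal{A}'$--modules cannot be detected on a single fibre (the cokernel can be torsion supported away from $q=1$). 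The paper is careful on exactly this point: in the proof of this proposition it only constructs the map $\sigma(v)(x)=\rho({\rm Ad}x(v))$ and proves it is \emph{injective} (the kernel reduces to $0$ at $q=1$, hence is infinitely divisible by $q^{\frac{1}{2d}}-1$, hence zero in a free module); injectivity into the coinduced module already gives local nilpotence of $J$, which is all that is needed here. Surjectivity --- i.e.\ your structural fact --- is only established in the \emph{following} proposition, only after specializing at generic $\varepsilon$, and only by a substantially heavier argument: an ${\rm Ext}$--vanishing via the bar resolution, injectivity of coinduced modules through the weak Artin--Rees property, de Rham cohomology of $M_+$ at $q=1$, and Grothendieck semicontinuity. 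The fact that the author needs all of that machinery is strong evidence that the direct lift you propose is not available.

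Two smaller points. Your alternative ``direct'' justification of local nilpotence from (\ref{qqq}) is also not convincing: modulo the earlier terms, ${\rm Ad}e_{\beta_k}$ acts on $Q_{\mathcal{A}'}$ as left multiplication by $\tilde e_{\beta_k}-\chi_q^s(\tilde e_{\beta_k})$, and there is no visible filtration on $Q_{\mathcal{A}'}$ that this operator strictly decreases; the clean argument is precisely the embedding $\sigma$ into ${\rm hom}_{\mathcal{A}'}(U_{\mathcal{A}'}^s(\m_+),W_q^s(G))$, where $J$ acts locally nilpotently by construction. Finally, your computation of ${\rm Wh}(Q_{\mathcal{A}'}\otimes_{W_q^s(G)}E)$ rests entirely on the unproven isomorphism $Q_{\mathcal{A}'}\simeq P\otimes_{\mathcal{A}'}W_q^s(G)$, so it inherits the gap; the paper instead reads the equality ${\rm Wh}(Q_{\mathcal{A}'}\otimes_{W_q^s(G)}E)=W_q^s(G)\otimes_{W_q^s(G)}E=E$ directly off the definition (\ref{QW}) of $W_q^s(G)$ as ${\rm Wh}(Q_{\mathcal{A}'})$, without invoking a coinduced decomposition.
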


\begin{proof}
First we prove that
$Q_{\mathcal{A}'}$ is an object in ${{\mathbb{C}}_{\mathcal{A}'}[G_*]-{\rm mod}_{U_{\mathcal{A}'}^s(\m_+)}^{\chi^{s}_q}}_{loc}$.
We shall prove that the  adjoint action of the augmentation ideal $J$ of $U_{\mathcal{A}'}^s(\m_+)$ on $Q_{\mathcal{A}'}$ is locally nilpotent. All the other properties of objects of the category ${{\mathbb{C}}_{\mathcal{A}'}[G_*]-{\rm mod}_{U_{\mathcal{A}'}^s(\m_+)}^{\chi^{s}_q}}_{loc}$ for $Q_{\mathcal{A}'}$ were already established in Proposition \ref{Qpr}.

Indeed, let ${\rm hom}_{\mathcal{A}'}(U_{\mathcal{A}'}^s(\m_+)), W_q^s(G))$ be the subspace in ${\rm Hom}_{\mathcal{A}'}(U_{\mathcal{A}'}(\m_+),W_q^s(G))$ which consists of the linear maps vanishing on some power of the augmentation ideal $J={\rm Ker }~\varepsilon$ of $U_{\mathcal{A}'}(\m_+)$, ${\rm hom}_{\mathcal{A}'}(U_{\mathcal{A}'}(\m_+),W_q^s(G))=\{f\in {\rm Hom}_{\mathcal{A}'}(U_{\mathcal{A}'}(\m_+),W_q^s(G)):f(J^n)=0~{\rm for~some}~n>0\}$.
Fix any linear map $\rho: Q_{\mathcal{A}'}\rightarrow W_q^s(G)\subset Q_{\mathcal{A}'}$ the restriction of which to $W_q^s(G)$ is the identity map, and let for any $v\in Q_{\mathcal{A}'}$ $\sigma(v):U_{\mathcal{A}'}^s(\m_+)\rightarrow W_q^s(G)$ be the $\mathcal{A}'$--linear homomorphism given by $\sigma(v)(x)=\rho({\rm Ad}x(v))$.
Since the adjoint action of $U_{\mathcal{A}'}(\m_+)$ on ${\mathbb{C}}_{\mathcal{A}'}[G_*]$ is locally finite the induced adjoint action of $U_{\mathcal{A}'}(\m_+)$ on $Q_{\mathcal{A}'}$ is locally finite as well (see the arguments in the end of the proof of Proposition \ref{locfin}). Therefore for any $v\in Q_{\mathcal{A}'}$ the space ${\rm Ad}U_{\mathcal{A}'}(\m_+)(v)$ has finite rank over $\mathcal{A}'$. This implies that in fact $\sigma(v)\in {\rm hom}_{\mathcal{A}'}(U_{\mathcal{A}'}^s(\m_+), W_q^s(G))$, and we have a map $\sigma:Q_{\mathcal{A}'}\rightarrow {\rm hom}_{\mathcal{A}'}(U_{\mathcal{A}'}^s(\m_+), W_q^s(G))$.

By definition $\sigma$ is a homomorphism of right $U_{\mathcal{A}'}^s(\m_+)$--modules, where the right action of $U_{\mathcal{A}'}^s(\m_+)$ on ${\rm hom}_{\mathcal{A}'}(U_{\mathcal{A}'}^s(\m_+), W_q^s(G))$ is induced by multiplication in $U_{\mathcal{A}'}^s(\m_+)$ from the left.

We claim that $\sigma$ is injective. Indeed, consider the specialization $\sigma_1$ of the homomorphism $\sigma$ at $q=1$.
The specialization of the algebra $U_{\mathcal{A}'}(\m_+)$ at $q=1$ is isomorphic to $U(\m_+)$, and the specialization $Q_{1}=Q_{\mathcal{A}'}/ (q^{\frac{1}{2d}}-1)Q_{\mathcal{A}'}$ of the $U_{\mathcal{A}'}(\m_+)$--module $Q_{\mathcal{A}'}$ at $q=1$ is isomorphic to ${\mathbb{C}}[\overline{q(\mu_{M_+}^{-1}(u))}]$. By Theorem \ref{var} ${\mathbb{C}}[\overline{q(\mu_{M_+}^{-1}(u))}]\cong {\mathbb{C}}[M_+]\otimes W^s(G)$. From Proposition 11.2 in \cite{S10} we obtain that the induced action of $U(\m_+)$ on the corresponding variety $\overline{q(\mu_{M_+}^{-1}(u))}$ is induced by the conjugation action of $M_+$ and now using Proposition \ref{crosssect} one immediately deduces that the induced action of $U(\m_+)$ on  ${\mathbb{C}}[M_+]\otimes W^s(G)$ is generated by the action of $U(\m_+)$ on ${\mathbb{C}}[M_+]$ by left invariant differential operators.

Using the exponential map $\exp: \m_+\rightarrow M_+$ we can also identify ${\mathbb{C}}[M_+]\otimes W^s(G)$ with the right $U(\m_+)$--module ${\rm hom}_{\mathbb{C}}(U(\m_+),W^s(G))=\{f\in {\rm Hom}_{\mathbb{C}}(U(\m_+),W^s(G)):f(J_1^n)=0~{\rm for~some}~n>0\}$, where $J_1$ is the augmentation ideal of $U(\m_+)$ generated by $\m_+$, and the right action of $U(\m_+)$ on ${\rm hom}_{\mathbb{C}}(U(\m_+), W^s(G))$ is induced by multiplication in $U(\m_+)$ from the left.

On the other hand the specialization of ${\rm hom}_{\mathcal{A}'}(U_{\mathcal{A}'}^s(\m_+), W_q^s(G))$ at $q=1$ is also isomorphic to ${\rm hom}_{\mathbb{C}}(U(\m_+), W^s(G))$, and hence under the above identifications the specialization of $\sigma_1$ of map $\sigma$ at $q=1$ becomes  the identity map.

Now let $W$ be the kernel of $\sigma$, and $W_1\subset Q_{1}$ its image under the canonical projection $Q_{\mathcal{A}'}\rightarrow Q_{1}=Q_{\mathcal{A}'}/ (q^{\frac{1}{2d}}-1)Q_{\mathcal{A}'}$.
$W_1$ must be contained in the kernel of $\sigma_1$. Since this kernel is trivial $W_1$ must be trivial as well, and hence $W=(q^{\frac{1}{2d}}-1)W'$, $W'\subset Q_{\mathcal{A}'}$. Since $Q_{\mathcal{A}'}$ is $\mathcal{A}'$--free and $\mathcal{A}'$ has no zero divisors we also have $W'\subset W$. Iterating this process we deduce that any element $w\in W$ can be represented in the form $w=(q^{\frac{1}{2d}}-1)^{B}w', w'\in W$ with arbitrary large $B\in \mathbb{N}$ which is possible only in case when $W=0$. Therefore $\sigma$ is injective.

Thus $Q_{\mathcal{A}'}$ is a submodule of ${\rm hom}_{\mathcal{A}'}(U_{\mathcal{A}'}^s(\m_+), W_q^s(G))$ the action of $J$ on which is locally nilpotent. Therefore the action of $J$ on $Q_{\mathcal{A}'}$ is locally nilpotent as well.

We conclude that for any  finitely generated $W_q^s(G)$--module $E$ which is free as an  ${\mathcal{A}'}$--module the space $Q_{\mathcal{A}'}\otimes_{W_q^s(G)}E$ can be equipped with the adjoint action induced by the adjoint action on $Q_{\mathcal{A}'}$ in such a way that the compatibility condition (\ref{compat}) is satisfied. Since the adjoint action of the augmentation ideal $J$ on $Q_{\mathcal{A}'}$ is locally nilpotent the induced adjoint action of the augmentation ideal $J$ on $Q_{\mathcal{A}'}\otimes_{W_q^s(G)}E$ is locally nilpotent as well.

The fact that $Q_{\mathcal{A}'}$ is an object of the category ${{\mathbb{C}}_{\mathcal{A}'}[G_*]-{\rm mod}_{U_{\mathcal{A}'}^s(\m_+)}^{\chi^{s}_q}}_{loc}$ implies now that $Q_{\mathcal{A}'}\otimes_{W_q^s(G)}E$ is an object of the category ${{\mathbb{C}}_{\mathcal{A}'}[G_*]-{\rm mod}_{U_{\mathcal{A}'}^s(\m_+)}^{\chi^{s}_q}}_{loc}$ as well.
Moreover, by the definition of the algebra $W_q^s(G)$
\begin{equation}\label{adw}
{\rm Hom}_{U_{\mathcal{A}'}^s(\m_+)}(\mathbb{C}_{\mathcal{A}'},Q_{\mathcal{A}'}\otimes_{W_q^s(G)}E)={\rm Wh}(Q_{\mathcal{A}'}\otimes_{W_q^s(G)}E)={W_q^s(G)}\otimes_{W_q^s(G)}E=E.
\end{equation}

This completes the proof of the fact that $Q_{\mathcal{A}'}$ and $Q_{\mathcal{A}'}\otimes_{W_q^s(G)}E$ are objects of the category ${{\mathbb{C}}_{\mathcal{A}'}[G_*]-{\rm mod}_{U_{\mathcal{A}'}^s(\m_+)}^{\chi^{s}_q}}_{loc}$.

\end{proof}

Obviously we also have that for any object $V$ of the category ${{\mathbb{C}}_{\mathcal{A}'}[G_*]-{\rm mod}_{U_{\mathcal{A}'}^s(\m_+)}^{\chi^{s}_q}}_{loc}$ the canonical map $Q_{\mathcal{A}'}\otimes_{W_q^s(G)}{\rm Wh}(V)\rightarrow V$ is a morphism in the category ${{\mathbb{C}}_{\mathcal{A}'}[G_*]-{\rm mod}_{U_{\mathcal{A}'}^s(\m_+)}^{\chi^{s}_q}}_{loc}$.

We also denote by ${{\mathbb{C}}_{\varepsilon}[G_*]-{\rm mod}_{U_{\varepsilon}^s(\m_+)}^{\chi^{s}_\varepsilon}}_{loc}$ the category of ${\mathbb{C}}_{\varepsilon}[G_*]$--modules which are specializations of modules from ${{\mathbb{C}}_{\mathcal{A}'}[G_*]-{\rm mod}_{U_{\mathcal{A}'}^s(\m_+)}^{\chi^{s}_q}}_{loc}$ at $q=\varepsilon\in \mathbb{C}$. The spaces of Whittaker vectors for modules from ${{\mathbb{C}}_{\varepsilon}[G_*]-{\rm mod}_{U_{\varepsilon}^s(\m_+)}^{\chi^{s}_\varepsilon}}_{loc}$, the adjoint action and the canonical map $Q_{\varepsilon}\otimes_{W_\varepsilon^s(G)}{\rm Wh}(V)\rightarrow V$, $V\in {{\mathbb{C}}_{\varepsilon}[G_*]-{\rm mod}_{U_{\varepsilon}^s(\m_+)}^{\chi^{s}_\varepsilon}}_{loc}$ are defined similarly to the case of modules from ${{\mathbb{C}}_{\mathcal{A}'}[G_*]-{\rm mod}_{U_{\mathcal{A}'}^s(\m_+)}^{\chi^{s}_q}}_{loc}$.

We have the following obvious $\varepsilon$--specialization of Proposition \ref{Qprop}.
\begin{proposition}\label{Qprop1}
Let $\varepsilon\in \mathbb{C}$ be generic. Then
for any  finitely generated $W_\varepsilon^s(G)$--module $E$ the space $Q_{\varepsilon}\otimes_{W_\varepsilon^s(G)}E$ is an object in ${{\mathbb{C}}_{\varepsilon}[G_*]-{\rm mod}_{U_{\varepsilon}^s(\m_+)}^{\chi^{s}_\varepsilon}}_{loc}$, and
$$
{\rm Wh}(Q_{\varepsilon}\otimes_{W_\varepsilon^s(G)}E)={\rm Hom}_{U_{\varepsilon}^s(\m_+)}(\mathbb{C}_{\varepsilon},Q_{\varepsilon}\otimes_{W_\varepsilon^s(G)}E)=E,
$$
where $\mathbb{C}_{\varepsilon}$ is the trivial representation of $U_{\varepsilon}^s(\m_+)$ given by the counit.
\end{proposition}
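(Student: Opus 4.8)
The plan is to deduce the statement from Proposition~\ref{Qprop} by reduction modulo $(q^{\frac{1}{2d}}-\varepsilon^{\frac{1}{2d}})$. By construction $Q_\varepsilon$, $W_\varepsilon^s(G)$, $U_\varepsilon^s(\m_+)$, ${\mathbb{C}}_\varepsilon[G_*]$ and the category ${{\mathbb{C}}_{\varepsilon}[G_*]-{\rm mod}_{U_{\varepsilon}^s(\m_+)}^{\chi^{s}_\varepsilon}}_{loc}$ are the specializations at $q=\varepsilon$ of their $\mathcal{A}'$-counterparts, so it suffices to realize $Q_\varepsilon\otimes_{W_\varepsilon^s(G)}E$ as such a specialization and to track its space of Whittaker vectors through the reduction. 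First I would lift the module $E$: since $W_\varepsilon^s(G)$ is Noetherian (being a filtered deformation of the finitely generated commutative algebra ${\mathbb{C}}[sZN_s]$ of Theorem~\ref{var}), one may choose a finite presentation of $E$ over $W_\varepsilon^s(G)$, lift its presentation matrix to $W_q^s(G)$ and take the cokernel $\tilde E_0$, so that $\tilde E_0\otimes_{\mathcal{A}'}\mathbb{C}\cong E$ by right exactness; then, as $\mathcal{A}'$ is a localization of $\mathbb{C}[q^{\pm\frac{1}{2d}}]$ and hence a principal ideal domain, one passes to $\tilde E=\tilde E_0/(\mbox{$\mathcal{A}'$-torsion})$, which is $\mathcal{A}'$-free, and for $\varepsilon$ lying outside the finite set of points supporting that torsion one still has $\tilde E\otimes_{\mathcal{A}'}\mathbb{C}\cong E$. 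Thus $\tilde E$ is an admissible input for Proposition~\ref{Qprop}.

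With $\tilde E$ fixed, the base-change identity for tensor products along $\mathcal{A}'\rightarrow\mathbb{C}$ gives a natural isomorphism
$$
Q_{\varepsilon}\otimes_{W_\varepsilon^s(G)}E\;\cong\;\bigl(Q_{\mathcal{A}'}\otimes_{W_q^s(G)}\tilde E\bigr)\otimes_{\mathcal{A}'}\mathbb{C},
$$
because $Q_\varepsilon=Q_{\mathcal{A}'}\otimes_{\mathcal{A}'}\mathbb{C}$, $W_\varepsilon^s(G)=W_q^s(G)\otimes_{\mathcal{A}'}\mathbb{C}$ and $E=\tilde E\otimes_{\mathcal{A}'}\mathbb{C}$. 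By Proposition~\ref{Qprop} the module $Q_{\mathcal{A}'}\otimes_{W_q^s(G)}\tilde E$ is an object of ${{\mathbb{C}}_{\mathcal{A}'}[G_*]-{\rm mod}_{U_{\mathcal{A}'}^s(\m_+)}^{\chi^{s}_q}}_{loc}$, so the displayed isomorphism exhibits $Q_\varepsilon\otimes_{W_\varepsilon^s(G)}E$ as the $q=\varepsilon$ specialization of an object of that category, which is precisely what membership in ${{\mathbb{C}}_{\varepsilon}[G_*]-{\rm mod}_{U_{\varepsilon}^s(\m_+)}^{\chi^{s}_\varepsilon}}_{loc}$ means. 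Here the left ${\mathbb{C}}_\varepsilon[G_*]$-action, the adjoint $U_\varepsilon^s(\m_+)$-action, compatibility condition~(\ref{compat}) and condition~4 are the reductions of the corresponding data on $Q_{\mathcal{A}'}\otimes_{W_q^s(G)}\tilde E$ provided by Propositions~\ref{Qpr} and~\ref{Qprop}, and local nilpotence of the augmentation ideal is inherited both by quotients and by tensor products of the above form.

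It remains to identify the Whittaker vectors. Write $V=Q_{\mathcal{A}'}\otimes_{W_q^s(G)}\tilde E$. Since the augmentation ideal of $U_{\mathcal{A}'}^s(\m_+)$ is generated as a left ideal by the root vectors $e_\beta$, $\beta\in\Delta_{\m_+}$, we have ${\rm Wh}(V)=\ker\phi$ with $\phi: V\rightarrow V^{\sharp\Delta_{\m_+}}$, $\phi(v)=({\rm Ad}\,e_\beta v)_{\beta\in\Delta_{\m_+}}$, and the same description holds after specialization. The inclusion ${\rm Wh}(V)\otimes_{\mathcal{A}'}\mathbb{C}\hookrightarrow{\rm Wh}(V\otimes_{\mathcal{A}'}\mathbb{C})$ holds once $V$ is $\mathcal{A}'$-torsion-free, which follows from $Q_{\mathcal{A}'}$ being a free right $W_q^s(G)$-module — a structural fact visible through the embedding $\sigma$ and the classical limit $Q_{1}\cong{\mathbb{C}}[M_+]\otimes W^s(G)$ (Theorem~\ref{var}) used in the proof of Proposition~\ref{Qprop} — since then $V\cong\bigoplus\tilde E$ is $\mathcal{A}'$-free. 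For the opposite inclusion I would show that ${\rm Wh}$ commutes with the specialization, equivalently that ${\rm coker}\,\phi$ has no $\varepsilon$-torsion; alternatively, by-passing this point, one re-runs the injectivity argument for $\sigma$ directly at $q=\varepsilon$ (the cross-section theorem, Proposition~\ref{crosssect}, and the identification of $Q_{1}$ do not degenerate at generic $\varepsilon$), obtaining as in formula~(\ref{adw}) that ${\rm Wh}(Q_\varepsilon\otimes_{W_\varepsilon^s(G)}E)=W_\varepsilon^s(G)\otimes_{W_\varepsilon^s(G)}E=E$.

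All of the above except the final two identifications is formal. The genuine work — which I would write out rather than leave implicit — lies in two places: that a finitely generated $W_\varepsilon^s(G)$-module admits, for generic $\varepsilon$, an $\mathcal{A}'$-free finitely generated $W_q^s(G)$-lift specializing back to it; and that the Whittaker functor commutes with the specialization $\otimes_{\mathcal{A}'}\mathbb{C}$, which rests on the freeness of $Q_{\mathcal{A}'}$ over $W_q^s(G)$ together with the absence of $\varepsilon$-torsion in ${\rm coker}\,\phi$. I expect the latter, the interchange of ${\rm Wh}(-)$ with specialization, to be the main obstacle.
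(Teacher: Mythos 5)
The paper offers no proof of this proposition at all: it is introduced with the single sentence that it is the ``obvious $\varepsilon$--specialization'' of Proposition \ref{Qprop}, so your proposal is not a different route but a careful elaboration of exactly the route the paper takes for granted. Two remarks soften your worries: the paper \emph{defines} the category ${{\mathbb{C}}_{\varepsilon}[G_*]-{\rm mod}_{U_{\varepsilon}^s(\m_+)}^{\chi^{s}_\varepsilon}}_{loc}$ as the class of $q=\varepsilon$ specializations of objects of the $\mathcal{A}'$--category, so once a lift $\tilde E$ exists membership is definitional, and the identity ${\rm Wh}(Q_{\varepsilon}\otimes_{W_\varepsilon^s(G)}E)=E$ is asserted in the paper exactly as in (\ref{adw}), ``by the definition of the algebra''; thus the two issues you single out as the genuine work are simply not addressed by the author either. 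The one technical slip in your write-up is the claim that $\tilde E_0/(\mbox{$\mathcal{A}'$-torsion})$ is automatically $\mathcal{A}'$--free: $\tilde E_0$ is finitely generated over $W_q^s(G)$ but only countably generated over the principal ideal domain $\mathcal{A}'$, and countably generated torsion-free modules over a PID need not be free (compare $\mathbb{C}(q)$ over $\mathbb{C}[q]$), so this point needs a separate argument (for instance a further localization of $\mathcal{A}'$ permitted by the genericity of $\varepsilon$) before Proposition \ref{Qprop} can be invoked.
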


The following proposition is crucial for the proof of the main statement of this paper.
\begin{proposition}
Assume that the roots $\gamma_1,\ldots, \gamma_n$ (or $\gamma_{n+1},\ldots, \gamma_{l'}$) are simple or one of the sets $\gamma_1,\ldots, \gamma_n$ or $\gamma_{n+1},\ldots, \gamma_{l'}$ is empty. Suppose also that the numbers $t_{i}$ defined in (\ref{defu}) are not equal to zero for all $i$. Then for generic $\varepsilon\in \mathbb{C}$
$Q_\varepsilon$ is isomorphic to ${\rm hom}_{\mathbb{C}}(U_\varepsilon^s(\m_+), \mathbb{C}) \otimes W^s_\varepsilon(G)$ as a $U_\varepsilon^s(\m_+)$--$W^s_\varepsilon(G)$--bimodule, where ${\rm hom}_{\mathbb{C}}(U_{\varepsilon}^s(\m_+), \mathbb{C})$ is the subspace in ${\rm Hom}_{\mathbb{C}}(U_{\varepsilon}^s(\m_+),\mathbb{C})$ which consists of the linear maps vanishing on some power of the augmentation ideal $J={\rm Ker }~\varepsilon$ (here $\varepsilon$ is the counit of $U_{\varepsilon}^s(\g)$) of $U_{\varepsilon}^s(\m_+)$, ${\rm hom}_{\mathbb{C}}(U_{\varepsilon}^s(\m_+),\mathbb{C})=\{f\in {\rm Hom}_{\mathbb{C}}(U_{\varepsilon}^s(\m_+),\mathbb{C}):f(J^n)=0~{\rm for~some}~n>0\}$.
\end{proposition}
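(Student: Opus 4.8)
The plan is to show that the $U_\varepsilon^s(\m_+)$--module monomorphism implicit in the proof of Proposition \ref{Qprop} is an isomorphism for generic $\varepsilon$, and then to rewrite its target and match the $W_\varepsilon^s(G)$--module structures. Recall from that proof that, after fixing an $\mathcal{A}'$--linear retraction $\rho:Q_{\mathcal{A}'}\to W_q^s(G)$ of the inclusion $W_q^s(G)\hookrightarrow Q_{\mathcal{A}'}$, one has an injective $\mathcal{A}'$--linear map $\sigma:Q_{\mathcal{A}'}\to {\rm hom}_{\mathcal{A}'}(U_{\mathcal{A}'}^s(\m_+),W_q^s(G))$, $\sigma(v)(x)=\rho({\rm Ad}x(v))$, whose specialization at $q=1$ is, under the identification $Q_1\cong {\mathbb{C}}[\overline{q(\mu_{M_+}^{-1}(u))}]\cong {\mathbb{C}}[M_+]\otimes W^s(G)$ of Theorem \ref{var} and the exponential map $\exp:\m_+\to M_+$, the canonical isomorphism onto ${\rm hom}_{\mathbb{C}}(U(\m_+),W^s(G))$. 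Using \eqref{ad} and $\Delta_s(yx)=\Delta_s(y)\Delta_s(x)$ one checks ${\rm Ad}x\circ{\rm Ad}y={\rm Ad}(yx)$, so ${\rm Ad}$ is a genuine right $U_{\mathcal{A}'}^s(\m_+)$--action and $\sigma$ is a map of right $U_{\mathcal{A}'}^s(\m_+)$--modules, where on the target $(f\cdot y)(x)=f(yx)$. Specializing at generic $\varepsilon$ yields a right $U_\varepsilon^s(\m_+)$--module monomorphism $\sigma_\varepsilon:Q_\varepsilon\to {\rm hom}_{\mathbb{C}}(U_\varepsilon^s(\m_+),W_\varepsilon^s(G))$; since every element of the target kills a power $J^n$ of the augmentation ideal and $U_\varepsilon^s(\m_+)/J^n$ is finite--dimensional, the target equals $\bigcup_n\big((U_\varepsilon^s(\m_+)/J^n)^{\ast}\otimes W_\varepsilon^s(G)\big)={\rm hom}_{\mathbb{C}}(U_\varepsilon^s(\m_+),\mathbb{C})\otimes W_\varepsilon^s(G)$. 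So everything reduces to surjectivity of $\sigma_\varepsilon$.

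For surjectivity I would argue by deformation from $q=1$. Both $Q_{\mathcal{A}'}$ and $H:={\rm hom}_{\mathcal{A}'}(U_{\mathcal{A}'}^s(\m_+),W_q^s(G))$ carry exhaustive increasing filtrations by $U_{\mathcal{A}'}^s(\m_+)$--submodules, $H_{\le n}={\rm Hom}_{\mathcal{A}'}(U_{\mathcal{A}'}^s(\m_+)/J^{n+1},W_q^s(G))$ and $Q_{\le n}=\{v\in Q_{\mathcal{A}'}:{\rm Ad}(J^{n+1})v=0\}$, the latter exhaustive by the local nilpotence of ${\rm Ad}J$ on $Q_{\mathcal{A}'}$ proved in Proposition \ref{Qprop}. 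As $J^{n+1}$ is a two--sided ideal, $\sigma^{-1}(H_{\le n})=Q_{\le n}$: if $\sigma(v)\in H_{\le n}$ then $\rho({\rm Ad}(xy)v)=0$ for all $x\in J^{n+1}$ and all $y$, i.e. $\sigma({\rm Ad}x(v))=0$, hence ${\rm Ad}x(v)=0$ by injectivity. Thus $\sigma$ is strictly filtered and induces injections $Q_{\le n}/Q_{\le n-1}\to {\rm Hom}_{\mathcal{A}'}(J^n/J^{n+1},W_q^s(G))$, which are isomorphisms at $q=1$ by the first paragraph (here one uses that ${\mathbb{C}}[M_+]\cong {\rm hom}_{\mathbb{C}}(U(\m_+),\mathbb{C})$ with its standard $U(\m_+)$--module structure, which rests on the cross--section Proposition \ref{crosssect}). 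Refining each subquotient further by the De Concini--Kac semi--commutative filtration of \cite{DK}, restricted from $U_{\mathcal{A}'}^s(\g)$ to ${\mathbb{C}}_{\mathcal{A}'}[G_*]$, to its quotient $Q_{\mathcal{A}'}$ and to $W_q^s(G)$ (with $\rho$ chosen compatibly), one reduces to a map of finitely generated free $\mathcal{A}'$--modules; such a map is injective with cokernel a torsion $\mathcal{A}'$--module vanishing at $q^{\frac{1}{2d}}=1$, hence supported at finitely many points of ${\rm Spec}\,\mathcal{A}'$ away from $q^{\frac{1}{2d}}=1$. For generic $\varepsilon$ all these cokernels vanish and $\sigma_\varepsilon$ is an isomorphism.

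It remains to match the module structures. The $U_\varepsilon^s(\m_+)$--action is matched by construction. For $W_\varepsilon^s(G)$ one chooses $\rho$ to be right $W_q^s(G)$--linear: the right $W_q^s(G)$--action on $Q_{\mathcal{A}'}$ commutes with ${\rm Ad}$, at $q=1$ the inclusion $W^s(G)\hookrightarrow Q_1={\mathbb{C}}[M_+]\otimes W^s(G)$ is split $W^s(G)$--linearly by evaluation at the unit of $M_+$, and this splitting deforms because $Q_{\mathcal{A}'}$ is $W_q^s(G)$--free by Theorem \ref{var}; with such a $\rho$, $\sigma$ is a map of $U_{\mathcal{A}'}^s(\m_+)$--$W_q^s(G)$--bimodules, $W_q^s(G)$ acting on the target by postcomposition, and the claim follows on specialization.

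The main obstacle is exactly the surjectivity of $\sigma_\varepsilon$: one must exclude the appearance at generic $q$ of ``extra'' Whittaker--type vectors that are absent in the classical limit, and the only control is the flatness of the whole construction over $\mathcal{A}'$. Making the comparison uniform --- i.e. producing a filtration of $Q_{\mathcal{A}'}$ with finitely generated free subquotients on which $\sigma$ remains strict, so that only finitely many exceptional $\varepsilon$ arise --- requires checking that the semi--commutative filtration of \cite{DK} is compatible with the augmentation--ideal filtration and with $\sigma$; this is the technical heart, and it is where the behaviour of all ingredients in the limit $q\to1$, ultimately the cross--section theorem of \cite{S6} and Theorem \ref{var}, is indispensable.
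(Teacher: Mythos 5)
Your overall skeleton --- the map $\sigma(v)(x)=\rho({\rm Ad}x(v))$, comparison with the classical limit $q=1$ via Theorem \ref{var} and the cross--section theorem, and a separate matching of the $W$--module structure --- is the one the paper uses, but both decisive steps are executed differently from the paper and each contains a genuine gap.

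First, surjectivity of $\sigma_\varepsilon$. Your filtration argument reduces to maps between subquotients of the form ${\rm Hom}_{\mathcal{A}'}(J^n/J^{n+1},W_q^s(G))$, which are \emph{not} finitely generated over $\mathcal{A}'$ because $W_q^s(G)$ has infinite rank, so the ``injective map of finitely generated free modules with torsion cokernel supported at finitely many points'' conclusion does not apply to them. You propose to refine further by the De Concini--Kac semi--commutative filtration ``with $\rho$ chosen compatibly,'' and you correctly identify the strictness of $\sigma$ for such a refinement as the technical heart --- but that is exactly the part left unproved, and it is far from clear it can be arranged: $\rho$ is an arbitrary linear retraction and ${\rm Ad}$ has no a priori compatibility with the semi--commutative filtration. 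The paper avoids any strictness statement altogether: it proves ${\rm Ext}^{n}_{U_{\varepsilon}^s(\m_+)}(\mathbb{C}_{\varepsilon},Q_{\varepsilon})=0$ for $n>0$ by building a filtered bar complex whose $q=1$ specialization computes the de Rham cohomology of $M_+$ (Lemma \ref{inj1} supplying injectivity of the resolution via the Artin--Rees property) and invoking Grothendieck semicontinuity for its finitely generated free subcomplexes; surjectivity then follows from the purely formal Lemma \ref{surj} (long exact sequence plus Engel's theorem applied to the cokernel), which needs only ${\rm Ext}^1=0$ and local nilpotence. Note also that injectivity of $\sigma_\varepsilon$ does not follow by ``specializing'' the injectivity of $\sigma$ --- specialization is not left exact --- and the paper proves it separately via the Whittaker--vector Lemma \ref{inj}.

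Second, the bimodule structure. Your justification for the existence of a right $W_q^s(G)$--linear retraction $\rho$ --- that ``$Q_{\mathcal{A}'}$ is $W_q^s(G)$--free by Theorem \ref{var}'' --- is circular: Theorem \ref{var} only identifies the specialization $Q_{1}$ with $\mathbb{C}[M_+]\otimes W^s(G)$ at $q=1$, while freeness of $Q_\varepsilon$ over $W_\varepsilon^s(G)$ is essentially the statement being proved. The paper does not need a $W$--linear $\rho$: having established that $\sigma_\varepsilon$ is an isomorphism of right $U_{\varepsilon}^s(\m_+)$--modules, it introduces the submodule $\sigma_{\varepsilon}^{-1}({\rm hom}_{\mathbb{C}}(U_{\varepsilon}^s(\m_+),\mathbb{C}))\subset Q_\varepsilon$ and shows that the map $\phi_\varepsilon$ from its tensor product with $W_\varepsilon^s(G)$ to $Q_\varepsilon$, induced by the right $W_\varepsilon^s(G)$--action (which commutes with ${\rm Ad}$ by Proposition \ref{Qpr}), is a bimodule isomorphism --- injective by Lemma \ref{inj} and surjective by Lemma \ref{surj}, using the injectivity of ${\rm hom}_{\mathbb{C}}(U_{\varepsilon}^s(\m_+),W_\varepsilon^s(G))$ from Lemma \ref{inj1}. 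You would need to either adopt this route or actually construct the $W_q^s(G)$--linear splitting over $\mathcal{A}'$, which is not done.
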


\begin{proof}
First we show that the specialization $\sigma_{\varepsilon}:Q_{\varepsilon}\rightarrow {\rm hom}_{\mathbb{C}}(U_{\varepsilon}^s(\m_+), W_\varepsilon^s(G))$ at $q=\varepsilon$ of the $U_{\mathcal{A}'}^s(\m_+)$--module homomorphism $\sigma:Q_{\mathcal{A}'}\rightarrow {\rm hom}_{\mathcal{A}'}(U_{\mathcal{A}'}^s(\m_+), W_q^s(G))$ constructed in the proof of Proposition \ref{Qprop} is an isomorphism of right $U_{\varepsilon}^s(\m_+)$--modules.

First we prove that $\sigma_\varepsilon$ is injective. The proof will be based on the following lemma that will be also used later.

\begin{lemma}\label{inj}
Let $\phi:X\rightarrow Y$ be a homomorphism of $U_{\varepsilon}^s(\m_+)$--modules. Denote by ${\rm Wh}(X)$ the subspace of Whittaker vectors of $X$, i.e. the subspace of $X$ which consists of elements $v$ such that $xv=\varepsilon(x)v$, $x\in U_{\varepsilon}^s(\m_+)$. Assume that the action of the augmentation ideal of $U_{\varepsilon}^s(\m_+)$ on $X$ is locally nilpotent and that the restriction of $\phi$ to the subspace of Whittaker vectors of $X$ is injective. Then $\phi$ is injective.
\end{lemma}

\begin{proof}
Let $Z \subset X$ be the kernel of $\phi$. Assume that $Z$ is not trivial. Observe that $Z$ is invariant with respect to the action induced by the action of $U_{\varepsilon}^s(\m_+)$ on $X$, and that the augmentation ideal of $U_{\varepsilon}^s(\m_+)$ acts on $X$ by locally nilpotent transformations. Therefore by Engel theorem $Z$ must contain a nonzero $U_{\varepsilon}^s(\m_+)$--invariant vector which is a Whittaker vector $v\in X$. But since the restriction of $\phi$ to the subspace of Whittaker vectors of $X$ is injective $\phi(v)\neq 0$. Thus we arrive at a contradiction, and hence $\phi$ is injective.
\end{proof}

Now we prove that $\sigma_\varepsilon$ is injective.
Observe that by Proposition \ref{Qprop1} the augmentation ideal of $U_{\varepsilon}^s(\m_+)$ acts on $Q_{\varepsilon}$ by locally nilpotent transformations. Let $v\in W_\varepsilon^s(G)$ be a nonzero Whittaker vector of $Q_{\varepsilon}$. By the definition of map $\sigma_\varepsilon$ we have $\sigma_\varepsilon(v)(1)=\rho_\varepsilon(v)=v$, where $\rho_\varepsilon: Q_{\varepsilon}\rightarrow W_\varepsilon^s(G)\subset Q_{\varepsilon}$ is the linear map used in the definition of the map $\sigma_\varepsilon$ the restriction of which to $W_q^s(G)$ is the identity map. Therefore $\sigma_\varepsilon(v)\neq 0$. Now by Lemma \ref{inj} applied to $\sigma_{\varepsilon}:Q_{\varepsilon}\rightarrow {\rm hom}_{\mathbb{C}}(U_{\varepsilon}^s(\m_+), W_\varepsilon^s(G))$ the homomorphism $\sigma_\varepsilon$ is injective.

Now we prove that $\sigma_\varepsilon$ is surjective.
In order to do that we shall calculate the cohomology space of the right $U_{\varepsilon}^s(\m_+)$--module $Q_{\varepsilon}$ with respect to the adjoint action of $U_{\varepsilon}^s(\m_+)$,
\begin{equation}\label{c0}
{\rm Ext}^{\bullet}_{U_{\varepsilon}^s(\m_+)}(\mathbb{C}_{\varepsilon}, Q_{\varepsilon}).
\end{equation}

We shall show that
\begin{equation}\label{c1}
{\rm Ext}^{n}_{U_{\varepsilon}^s(\m_+)}(\mathbb{C}_{\varepsilon}, Q_{\varepsilon})=0,~n>0.
\end{equation}
Note that we already know that by definition
\begin{equation}\label{c2}
{\rm Ext}^{0}_{U_{\varepsilon}^s(\m_+)}(\mathbb{C}_{\varepsilon}, Q_{\varepsilon})={\rm Hom}_{U_{\varepsilon}^s(\m_+)}(\mathbb{C}_{\varepsilon}, Q_{\varepsilon})=W_\varepsilon^s(G).
\end{equation}

We shall calculate the $\rm Ext$ functors in formula (\ref{c1}) using a deformation argument which is based on upper semicontinuity of cohomology functor with respect to base ring localizations discovered by Grothendieck (see for instance \cite{semc}, Theorem 1.2 for the formulation of this principle suitable for our purposes). Let $X^\gr$ be a complex of finitely generated free modules over a ring $\bf k$, $X^\gr_p$ the corresponding complex over the residue field ${k}(p)$ of the localization of $\bf k$ at a prime ideal $p$. Then for each $i$ the function $p \mapsto {\rm dim}_{\mathbb{C}}H^i(X^\gr_p)$ is upper semicontinuous on ${\rm Spec}(\bf k)$. In particular, if $H^i(X^\gr_{p_0})=0$ for some $p_0$ then for generic $p$ we have $H^i(X^\gr_p)=0$.

As $\bf k$ we shall take $\mathcal{A}'$. Note that one can define a localization,
$\mathcal{A}'/(1-q^{\frac{1}{2d}})\mathcal{A}'=\mathbb{C}$ as well as similar localizations for other generic values of $\varepsilon$, $\mathcal{A}'/(\varepsilon^{\frac{1}{2d}}-q^{\frac{1}{2d}})\mathcal{A}'=\mathbb{C}$.

An appropriate complex $X^\gr$ is a little bit more complicated to define.
Let $\mathbb{C}_{\mathcal{A}'}$ be the trivial representation of $U_{\mathcal{A}'}^s(\m_+)$ given by the counit.
We shall construct a complex $X^\gr_{\mathcal{A}'}$ for calculating the functor ${\rm Ext}^\gr_{U_{\mathcal{A}'}(\m_+)}(\mathbb{C}_{\mathcal{A}'},Q_{\mathcal{A}'})$ the specialization of which for any generic $\varepsilon$ is a complex for calculating the functor ${\rm Ext}^{\gr}_{U_{\varepsilon}^s(\m_+)}(\mathbb{C}_{\varepsilon},Q_{\varepsilon})$, and the specialization of $X^\gr_{\mathcal{A}'}$ at $q=1$ is a complex for calculating $U(\m_+)$--cohomology with values $\mathbb{C}[M_+]\otimes W^s(G)$, where the action of $U(\m_+)$ on $\mathbb{C}[M_+]\otimes W^s(G)$ is induced by the natural action of $U(\m_+)$ on $\mathbb{C}[M_+]$ by left invariant differential operators. These cohomology is just the de Rham cohomology of $M_+$, and hence is trivial in nonzero degrees. Moreover, the complex $X^\gr_{\mathcal{A}'}$ will be filtered by finitely generated free modules. Therefore Grothendieck upper semicontinuity of cohomology together with the property of the specialization of our complex at $q=1$ imply vanishing property (\ref{c1}).

To construct the complex $X^\gr_{\mathcal{A}'}$ we first recall the definition of the standard bar resolution of an associative algebra $A$ over a ring $\bf k$ regarded as an $A-A$--bimodule (see \cite{carteil}, Ch. 9, \S 6),
\begin{equation}\label{bar}
\begin{array}{l}
{\rm Bar}^n(A)=\underbrace{A\otimes_{\bf k}\ldots \otimes_{\bf k} A}_{n+2 ~\mbox{\tiny  times}},~~n\geq 0,  \\
 \\
 d(a_0\otimes \ldots \otimes a_{n+1})= \\
\\
  \sum_{s=0}^{n}(-1)^s a_0\otimes\ldots\otimes
  a_sa_{s+1}\otimes\ldots\otimes a_{n+1}
\end{array}
\end{equation}
where
$a_0,\ldots ,a_{n+1}\in A$.

Now observe that if one introduces degrees of elements of $U_{\mathcal{A}'}^s(\n_+)$ by putting ${\rm deg}e_i=1$, $i=1,\ldots ,l$ the algebra $U_{\mathcal{A}'}^s(\n_+)$ becomes naturally $\mathbb{N}$--graded  by subspaces $U_{\mathcal{A}'}^s(\n_+)^k$ which are free over $\mathcal{A}'$ and have finite rank over $\mathcal{A}'$. Let $U_{\mathcal{A}'}^s(\m_+)^k$ be the induced grading of $U_{\mathcal{A}'}^s(\m_+)$ and denote by  $U_{\mathcal{A}'}^s(\m_+)_k$ the induced filtration of $U_{\mathcal{A}'}^s(\m_+)$ by subspaces of finite rank over $\mathcal{A}'$.

Now one can define a filtration of the $U_{\mathcal{A}'}^s(\m_+)$--module $Q_{\mathcal{A}'}$ by free $\mathcal{A}'$--modules of finite rank over $\mathcal{A}'$. In order to do that we recall that $Q_{\mathcal{A}'}$ is a submodule of ${\rm hom}_{\mathcal{A}'}(U_{\mathcal{A}'}^s(\m_+),W_q^s(G))$ as we observed in the proof of Proposition \ref{Qprop}. We also observe that from the definition of the space ${\rm hom}_{\mathcal{A}'}(U_{\mathcal{A}'}^s(\m_+),W_q^s(G))$ it follows that
\begin{equation}\label{homt}
{\rm hom}_{\mathcal{A}'}(U_{\mathcal{A}'}^s(\m_+),W_q^s(G))={\rm hom}_{\mathcal{A}'}(U_{\mathcal{A}'}^s(\m_+),\mathcal{A}')\otimes_{\mathcal{A}'} W_q^s(G),
\end{equation}
where
$$
{\rm hom}_{\mathcal{A}'}(U_{\mathcal{A}'}^s(\m_+),\mathcal{A}')=\oplus_{k\leq 0}{\rm hom}_{\mathcal{A}'}(U_{\mathcal{A}'}^s(\m_+)^{-k},\mathcal{A}'),
$$
Observe that ${\rm hom}_{\mathcal{A}'}(U_{\mathcal{A}'}^s(\m_+),\mathcal{A}')$ is naturally a $\mathbb{Z}_-$--graded module over the $\mathbb{N}$--graded algebra $U_{\mathcal{A}'}^s(\m_+)$. Denote by ${\rm hom}_{\mathcal{A}'}(U_{\mathcal{A}'}^s(\m_+),\mathcal{A}')_k=\oplus_{p\geq k}{\rm hom}_{\mathcal{A}'}(U_{\mathcal{A}'}^s(\m_+)^{-p},\mathcal{A}')$ the corresponding filtration of ${\rm hom}_{\mathcal{A}'}(U_{\mathcal{A}'}^s(\m_+),\mathcal{A}')$ by subspaces which are free over $\mathcal{A}'$ and have finite rank over $\mathcal{A}'$. By construction the action of $U_{\mathcal{A}'}^s(\m_+)$ on ${\rm hom}_{\mathcal{A}'}(U_{\mathcal{A}'}^s(\m_+),\mathcal{A}')$ preserves the filtration of ${\rm hom}_{\mathcal{A}'}(U_{\mathcal{A}'}^s(\m_+),\mathcal{A}')$. Combining the filtration on ${\rm hom}_{\mathcal{A}'}(U_{\mathcal{A}'}^s(\m_+),\mathcal{A}')$ with an arbitrary filtration $W_q^s(G)_k$, $k\in \mathbb{N}$ of $W_q^s(G)$ by free $\mathcal{A}'$--submodules of finite rank we obtain a filtration of
$$
{\rm hom}_{\mathcal{A}'}(U_{\mathcal{A}'}^s(\m_+),W_q^s(G))={\rm hom}_{\mathcal{A}'}(U_{\mathcal{A}'}^s(\m_+),\mathcal{A}')\otimes_{\mathcal{A}'} W_q^s(G),
$$
$$
{\rm hom}_{\mathcal{A}'}(U_{\mathcal{A}'}^s(\m_+),W_q^s(G))_k=\bigcup_{q-p\leq k}{\rm hom}_{\mathcal{A}'}(U_{\mathcal{A}'}^s(\m_+),\mathcal{A}')_p\otimes_{\mathcal{A}'} W_q^s(G)_q,~k\in \mathbb{N}.
$$
The induced filtration of the submodule $Q_{\mathcal{A}'}$ has components which are free $\mathcal{A}'$--modules of finite rank. By construction the action of $U_{\mathcal{A}'}^s(\m_+)$ on $Q_{\mathcal{A}'}$ preserves the components $(Q_{\mathcal{A}'})_k$ of that filtration.

The filtration $U_{\mathcal{A}'}^s(\m_+)_k$ induces a filtration ${\rm Bar}^n(U_{\mathcal{A}'}^s(\m_+))_k$ of the complex ${\rm Bar}^n(U_{\mathcal{A}'}^s(\m_+))$ by subcomplexes with finite rank graded components.

Consider the subcomplex
$$
{\rm hom}_{U_{\mathcal{A}'}^s(\m_+)}({\rm Bar}^\gr(U_{\mathcal{A}'}^s(\m_+)),Q_{\mathcal{A}'})=
$$
$$
=\bigcup_k{\rm Hom}_{U_{\mathcal{A}'}^s(\m_+)}({\rm Bar}^\gr(U_{\mathcal{A}'}^s(\m_+))_k,Q_{\mathcal{A}'})
$$
of the complex ${\rm Hom}_{U_{\mathcal{A}'}^s(\m_+)}({\rm Bar}^\gr(U_{\mathcal{A}'}^s(\m_+),Q_{\mathcal{A}'})$. Since ${\rm Bar}^\gr(U_{\mathcal{A}'}^s(\m_+))$ is homotopic to $U_{\mathcal{A}'}^s(\m_+)$ as a filtered $U_{\mathcal{A}'}^s(\m_+)-U_{\mathcal{A}'}^s(\m_+)$ bimodule the cohomology of
$${\rm hom}_{U_{\mathcal{A}'}^s(\m_+)}({\rm Bar}^\gr(U_{\mathcal{A}'}^s(\m_+)),Q_{\mathcal{A}'})$$ coincide with $Q_{\mathcal{A}'}$. We claim that the homological degree graded components of $${\rm hom}_{U_{\mathcal{A}'}^s(\m_+)}({\rm Bar}^\gr(U_{\mathcal{A}'}^s(\m_+)),Q_{\mathcal{A}'})$$ are injective $U_{\mathcal{A}'}^s(\m_+)$--modules, and hence ${\rm hom}_{U_{\mathcal{A}'}^s(\m_+)}({\rm Bar}^\gr(U_{\mathcal{A}'}^s(\m_+)),Q_{\mathcal{A}'})$ is an injective resolution of $Q_{\mathcal{A}'}$.

Indeed, by construction each of the components ${\rm hom}_{U_{\mathcal{A}'}^s(\m_+)}({\rm Bar}^n(U_{\mathcal{A}'}^s(\m_+)),Q_{\mathcal{A}'})$ is isomorphic to ${\rm hom}_{\mathcal{A}'}(U_{\mathcal{A}'}^s(\m_+),W)=\bigcup_k{\rm Hom}_{\mathcal{A}'}((U_{\mathcal{A}'}^s(\m_+))_k,W)$ for some free $\mathcal{A}'$--module $W$, and the right action of $U_{\mathcal{A}'}^s(\m_+)$ on ${\rm hom}_{\mathcal{A}'}(U_{\mathcal{A}'}^s(\m_+),W)$ is induced by multiplication on $U_{\mathcal{A}'}^s(\m_+)$ from the left. Clearly, ${\rm hom}_{\mathcal{A}'}(U_{\mathcal{A}'}^s(\m_+),W)$ is the subspace of ${\rm Hom}_{\mathcal{A}'}(U_{\mathcal{A}'}^s(\m_+),W)$ which consist of the linear maps vanishing on some power of the augmentation ideal $J={\rm Ker }\varepsilon$ of $U_{\mathcal{A}'}^s(\m_+)$, ${\rm hom}_{\mathcal{A}'}(U_{\mathcal{A}'}^s(\m_+),W)=\{f\in {\rm Hom}_{\mathcal{A}'}(U_{\mathcal{A}'}^s(\m_+),W):f(J^p)=0~{\rm for~some}~p>0\}$.

\begin{lemma}\label{inj1}
Let $J={\rm Ker }\varepsilon$ be the augmentation ideal of $U_{\mathcal{A}'}^s(\m_+)$, ${\rm hom}_{\mathcal{A}'}(U_{\mathcal{A}'}^s(\m_+),W)=\{f\in {\rm Hom}_{\mathcal{A}'}(U_{\mathcal{A}'}^s(\m_+),W):f(J^p)=0~{\rm for~some}~p>0\}$, where $W$ is a free $\mathcal{A}'$--module. Equip ${\rm hom}_{\mathcal{A}'}(U_{\mathcal{A}'}^s(\m_+),W)$ with the right action of $U_{\mathcal{A}'}^s(\m_+)$ induced by multiplication on $U_{\mathcal{A}'}^s(\m_+)$ from the left. Then the $U_{\mathcal{A}'}^s(\m_+)$--module ${\rm hom}_{\mathcal{A}'}(U_{\mathcal{A}'}^s(\m_+),W)$ is injective.
\end{lemma}

\begin{proof}
First observe that the algebra $U_{\mathcal{A}'}^s(\m_+)$ is Notherian and ideal $J$ satisfies the so-called weak Artin--Rees property, i.e. for every finitely generated left $U_{\mathcal{A}'}^s(\m_+)$--module $M$ and its submodule $N$ there exists an integer $n>0$ such that $J^nM\bigcap N\subset JN$.
Indeed, observe that the algebra $U_{\mathcal{A}'}^s(\m_+)$ can be equipped with a filtration similar to that introduced in Section \ref{wqreal} on the algebra $U_q^s(\g)$ in such a way that the associated graded algebra is finitely generated and semi--commutative (see (\ref{scomm})).
The fact that $U_{\mathcal{A}'}^s(\m_+)$ is Notherian follows from the existence of the filtration on it for which the associated graded algebra is  semi--commutative and from Theorem 4 in  Ch. 5, \S 3 in \cite{Jac} (compare also with Theorem 4.8 in \cite{Mc}). The ideal $J$ satisfies the weak Artin--Rees property because the subring $U_{\mathcal{A}'}^s(\m_+) +Jt+J^2t^2+\ldots \subset U_{\mathcal{A}'}^s(\m_+)[t]$, where $t$ is a central indeterminate, is Notherian (see \cite{Pas}, Ch. 11, \S 2, Lemma 2.1). The last fact follows from the existence of a filtration on $U_{\mathcal{A}'}^s(\m_+) +Jt+J^2t^2+\ldots$ induced by the filtration on $U_{\mathcal{A}'}^s(\m_+)$ for which the associated graded algebra is semi--commutative and again from Theorem 4 in  Ch. 5, \S 3 in \cite{Jac}.

Finally, the module ${\rm Hom}_{\mathcal{A}'}(U_{\mathcal{A}'}^s(\m_+),W)$ is obviously injective. By Lemma 3.2 in Ch. 3, \cite{Har} the module ${\rm hom}_{\mathcal{A}'}(U_{\mathcal{A}'}^s(\m_+),W)=\{f\in {\rm Hom}_{\mathcal{A}'}(U_{\mathcal{A}'}^s(\m_+),W):f(J^p)=0~{\rm for~some}~p>0\}$ is also injective since the ideal $J$ satisfies the weak Artin--Rees property.
\end{proof}

 Lemma \ref{inj1} implies that the complex ${\rm hom}_{U_{\mathcal{A}'}^s(\m_+)}({\rm Bar}^\gr(U_{\mathcal{A}'}^s(\m_+)),Q_{\mathcal{A}'})$ is  an injective resolution of $Q_{\mathcal{A}'}$ as a right $U_{\mathcal{A}'}^s(\m_+)$--module.

Now consider the complex $$X^\gr_{\mathcal{A}'}={\rm Hom}_{U_{\mathcal{A}'}^s(\m_+)}(\mathbb{C}_{\mathcal{A}'},{\rm hom}_{U_{\mathcal{A}'}^s(\m_+)}({\rm Bar}^\gr(U_{\mathcal{A}'}^s(\m_+)),Q_{\mathcal{A}'}))$$ for calculating the functor $${\rm Ext}^\gr_{U_{\mathcal{A}'}^s(\m_+)}(\mathbb{C}_{\mathcal{A}'},Q_{\mathcal{A}'})=H^\gr(X^\gr_{\mathcal{A}'}).$$

Observe that the specialization of the $U_{\mathcal{A}'}^s(\m_+)$--module $\mathbb{C}_{\mathcal{A}'}$ at $\varepsilon$ is isomorphic to $\mathbb{C}_{\varepsilon}$, and the specialization of the $U_{\mathcal{A}'}^s(\m_+)$--module $Q_{\mathcal{A}'}$ at $\varepsilon$ is isomorphic to $Q_{\varepsilon}$. Therefore the specialization of the complex $X^\gr_{\mathcal{A}'}$ at $\varepsilon$ is isomorphic to $$X^\gr_{\varepsilon}={\rm Hom}_{U_{\varepsilon}^s(\m_+)}(\mathbb{C}_{\varepsilon},{\rm hom}_{U_{\varepsilon}^s(\m_+)}({\rm Bar}^\gr(U_{\varepsilon}^s(\m_+)),
Q_{\varepsilon})),$$ where the complex $${\rm hom}_{U_{\varepsilon}^s(\m_+)}({\rm Bar}^\gr(U_{\varepsilon}^s(\m_+)),
Q_{\varepsilon})$$ is defined similarly to ${\rm hom}_{U_{\mathcal{A}'}^s(\m_+)}({\rm Bar}^\gr(U_{\mathcal{A}'}^s(\m_+)),Q_{\mathcal{A}'})$ using the $\varepsilon$--specialization of the filtration $(U_{\mathcal{A}'}^s(\m_+))_k$. Applying the same arguments as in case of the complex $X^\gr_{\mathcal{A}'}$ one can show that $X^\gr_{\varepsilon}$ is a complex for calculating the functor ${\rm Ext}^{\gr}_{U_{\varepsilon}^s(\m_+)}(\mathbb{C}_{\varepsilon},
Q_{\varepsilon})=
H^\gr(X^\gr_{\varepsilon})$.

The specialization of the algebra $U_{\mathcal{A}'}^s(\m_+)$ at $q=1$ is isomorphic to $U(\m_+)$, the specialization of the $U_{\mathcal{A}'}^s(\m_+)$--module $\mathbb{C}_{\mathcal{A}'}$ at $q=1$ is isomorphic to the trivial representation $\mathbb{C}_0$ of $U(\m_+)$, and the specialization of the $U_{\mathcal{A}'}^s(\m_+)$--module $Q_{\mathcal{A}'}$ at $q=1$ is isomorphic to ${\mathbb{C}}[\overline{q(\mu_{M_+}^{-1}(u))}]$. By Theorem \ref{var} $${\mathbb{C}}[\overline{q(\mu_{M_+}^{-1}(u))}]\cong {\mathbb{C}}[M_+]\otimes W^s(G).$$ From the proof of Proposition 11.2 in \cite{S10} we obtain that the induced action of $U(\m_+)$ on the corresponding variety $\overline{q(\mu_{M_+}^{-1}(u))}$ is obtained from the conjugation action of $M_+$ and now using proposition \ref{crosssect} one immediately deduces that the induced action of $U(\m_+)$ on  ${\mathbb{C}}[M_+]\otimes W^s(G)$ is generated by the action of $U(\m_+)$ on ${\mathbb{C}}[M_+]$ by left invariant differential operators. Therefore the specialization of the complex $X^\gr_{\mathcal{A}'}$ at $q=1$ is isomorphic to $$X^\gr_{1}={\rm Hom}_{U(\m_+)}(\mathbb{C}_0,{\rm hom}_{U(\m_+)}({\rm Bar}^\gr(U(\m_+)),
{\mathbb{C}}[M_+]\otimes W^s(G))),$$ where the complex $${\rm hom}_{U(\m_+)}({\rm Bar}^\gr(U(\m_+)),
{\mathbb{C}}[M_+]\otimes W^s(G))$$ is defined similarly to $${\rm hom}_{U_{\mathcal{A}'}^s(\m_+)}({\rm Bar}^\gr(U_{\mathcal{A}'}^s(\m_+)),Q_{\mathcal{A}'})$$ using the $q=1$--specialization of the filtration $(U_{\mathcal{A}'}^s(\m_+))_k$. Applying the same arguments as in case of the complex $X^\gr_{\mathcal{A}'}$ one can show that $X^\gr_{1}$ is a complex for calculating the functor $${\rm Ext}^{\gr}_{U(\m_+)}(\mathbb{C}_0,
{\mathbb{C}}[M_+]\otimes W^s(G))=
H^\gr(X^\gr_{1}).$$

We also obviously have ${\rm Ext}^{\gr}_{U(\m_+)}(\mathbb{C}_0,
{\mathbb{C}}[M_+]\otimes W^s(G))={\rm Ext}^{\gr}_{U(\m_+)}(\mathbb{C}_0,
{\mathbb{C}}[M_+]\otimes W^s(G))=H_{dR}^\gr(M_+)\otimes W^s(G)$, where $H_{dR}^\gr(M_+)$ is the de Rham cohomology of the unipotent group $M_+$. Since $H_{dR}^n(M_+)=0$ for $n>0$ we deduce that $H^n(X^\gr_{1})=0$ for $n>0$.

Finally observe that the complex $X^\gr_{\mathcal{A}'}$ and its specializations introduced above can be equipped with compatible filtrations by finitely generated free subcomplexes. These filtrations are induced by the filtrations $(Q_{\mathcal{A}'})_k$ and $(U_{\mathcal{A}'}^s(\m_+))_k$ and by their specializations at $q=\varepsilon$ and $q=1$. The Grothendieck cohomology semicontinuity property holds for these subcomplexes, and hence for the complex $X^\gr_{\mathcal{A}'}$ as well. Therefore from the vanishing property $H^n(X^\gr_{1})=0$ for $n>0$ we deduce that for generic $\varepsilon$ ${\rm Ext}^{n}_{U_{\varepsilon}^s(\m_+)}(\mathbb{C}_{\varepsilon},
Q_{\varepsilon})=
H^n(X^\gr_{\varepsilon})=0$ for $n>0$.

Now we prove that $\sigma_\varepsilon$ is surjective. We start with the following lemma.

\begin{lemma}\label{surj}
Let $\phi:X\rightarrow Y$ be an injective homomorphism of $U_{\varepsilon}^s(\m_+)$--modules. Assume that $\phi$ induces an isomorphism of the spaces of Whittaker vectors of $X$  and of $Y$, and that ${\rm Ext}^{1}_{U_{\varepsilon}^s(\m_+)}(\mathbb{C}_{\varepsilon},
X)=0$, where $\mathbb{C}_{\varepsilon}$ is the trivial representation of $U_{\varepsilon}^s(\m_+)$. Suppose also that the action of the augmentation ideal $J$ of $U_{\varepsilon}^s(\m_+)$ on the cokernel of $\phi$ is locally nilpotent. Then $\phi$ is surjective.
\end{lemma}

\begin{proof}
Consider the exact sequence
$$
0\rightarrow X \rightarrow Y \rightarrow W' \rightarrow 0,
$$
where $W'$ is the cokernel of $\phi$, and the corresponding long exact sequence of cohomology,
$$
0\rightarrow {\rm Ext}^{0}_{U_{\varepsilon}^s(\m_+)}(\mathbb{C}_{\varepsilon},
X)\rightarrow {\rm Ext}^{0}_{U_{\varepsilon}^s(\m_+)}(\mathbb{C}_{\varepsilon},
Y)\rightarrow {\rm Ext}^{0}_{U_{\varepsilon}^s(\m_+)}(\mathbb{C}_{\varepsilon},
W')\rightarrow
$$
$$
\rightarrow {\rm Ext}^{1}_{U_{\varepsilon}^s(\m_+)}(\mathbb{C}_{\varepsilon},
X)\rightarrow \ldots .
$$

Since $\phi$ induces an isomorphism of the spaces of Whittaker vectors of $X$  and of $Y$, and \\ ${\rm Ext}^{1}_{U_{\varepsilon}^s(\m_+)}(\mathbb{C}_{\varepsilon},
X)=0$, the initial part of the long exact cohomology sequence takes the form
$$
0\rightarrow {\rm Wh}(X)\rightarrow {\rm Wh}(Y)\rightarrow {\rm Wh}(W')\rightarrow 0,
$$
where the second map in the last sequence is an isomorphism. Using the last exact sequence we deduce that ${\rm Wh}(W')=0$. But the augmentation ideal $J$ acts on $W'$ by locally nilpotent transformations. Therefore, by Engel theorem, if $W'$ is not trivial there should exists a nonzero $U_{\varepsilon}^s(\m_+)$--invariant vector in it. Thus we arrive at a contradiction, and $W'=0$. Therefore $\phi$ is surjective.

\end{proof}

Now recall that by (\ref{c1}) and (\ref{c2}) we already know that
$$
{\rm Wh}(Q_{\varepsilon})=W_\varepsilon^s(G),~~{\rm Ext}^{1}_{U_{\varepsilon}^s(\m_+)}(\mathbb{C}_{\varepsilon}, Q_{\varepsilon})=0,
$$
and by the definition of the module ${\rm hom}_{\mathbb{C}}(U_{\varepsilon}^s(\m_+), W_\varepsilon^s(G))$
$$
{\rm Wh}({\rm hom}_{\mathbb{C}}(U_{\varepsilon}^s(\m_+), W_\varepsilon^s(G)))={\rm Hom}_{U_{\varepsilon}^s(\m_+)}(\mathbb{C}_{\varepsilon}, {\rm hom}_{\mathbb{C}}(U_{\varepsilon}^s(\m_+), W_\varepsilon^s(G)))=W_\varepsilon^s(G).
$$

Observe also that by construction the map $\sigma_{\varepsilon}:Q_{\varepsilon}\rightarrow {\rm hom}_{\mathbb{C}}(U_{\varepsilon}^s(\m_+), W_\varepsilon^s(G))$ induces an isomorphism of the spaces of Whittaker vectors. Since the action of the augmentation ideal $J$ on ${\rm hom}_{\mathbb{C}}(U_{\varepsilon}^s(\m_+), W_\varepsilon^s(G))$ is locally nilpotent its action on the cokernel of $\sigma_{\varepsilon}$ is locally nilpotent as well.
Therefore $\sigma_\varepsilon$ is surjective by Lemma \ref{surj}.

Thus we have proved that $\sigma_{\varepsilon}:Q_{\varepsilon}\rightarrow {\rm hom}_{\mathbb{C}}(U_{\varepsilon}^s(\m_+), W_\varepsilon^s(G))$ is an isomorphism of right $U_{\varepsilon}^s(\m_+)$--modules. Note that by the definitions of the spaces ${\rm hom}_{\mathbb{C}}(U_{\varepsilon}^s(\m_+), W_\varepsilon^s(G))$ and ${\rm hom}_{\mathbb{C}}(U_{\varepsilon}^s(\m_+),\mathbb{C})$ we also have an obvious right $U_{\varepsilon}^s(\m_+)$--module isomorphism ${\rm hom}_{\mathbb{C}}(U_{\varepsilon}^s(\m_+, W_\varepsilon^s(G))={\rm hom}_{\mathbb{C}}(U_{\varepsilon}^s(\m_+),\mathbb{C})\otimes W_\varepsilon^s(G)$.

Now consider the $U_{\varepsilon}^s(\m_+)$--submodule $\sigma_{\varepsilon}^{-1}({\rm hom}_{\mathbb{C}}(U_{\varepsilon}^s(\m_+),\mathbb{C}))$ of $Q_{\varepsilon}$, where ${\rm hom}_{\mathbb{C}}(U_{\varepsilon}^s(\m_+),\mathbb{C})\subset {\rm hom}_{\mathbb{C}}(U_{\varepsilon}^s(\m_+), W_\varepsilon^s(G))$. Obviously $\sigma_{\varepsilon}^{-1}({\rm hom}_{\mathbb{C}}(U_{\varepsilon}^s(\m_+),\mathbb{C}))\simeq {\rm hom}_{\mathbb{C}}(U_{\varepsilon}^s(\m_+),\mathbb{C})$ as a right $U_{\varepsilon}^s(\m_+)$--module.

Let $\phi_\varepsilon:\sigma_{\varepsilon}^{-1}({\rm hom}_{\mathbb{C}}(U_{\varepsilon}^s(\m_+),\mathbb{C}))\otimes W_\varepsilon^s(G)\rightarrow Q_{\varepsilon}$ be the map induced  by the action of $W_\varepsilon^s(G)$ on $Q_{\varepsilon}$. Since this action commutes with the adjoint action of $U_{\varepsilon}^s(\m_+)$ on $Q_{\varepsilon}$ we infer that $\phi_\varepsilon$ is a homomorphism of $U_{\varepsilon}^s(\m_+)$--$W_\varepsilon^s(G)$--bimodules.

We claim that $\phi_\varepsilon$ is injective. This follows straightforwardly from Lemma \ref{inj} because all Whittaker vectors of $\sigma_{\varepsilon}^{-1}({\rm hom}_{\mathbb{C}}(U_{\varepsilon}^s(\m_+),\mathbb{C}))\otimes W_\varepsilon^s(G)$ belong to the subspace $$1\otimes W_\varepsilon^s(G)\subset \sigma_{\varepsilon}^{-1}({\rm hom}_{\mathbb{C}}(U_{\varepsilon}^s(\m_+),\mathbb{C}))\otimes W_\varepsilon^s(G),$$ and the restriction of $\phi_\varepsilon$ to this subspace is injective.

Now we show that $\phi_\varepsilon$ is surjective. By the specializing the result of Lemma \ref{inj1} at $q=\varepsilon$ one can immediately deduce that the right $U_{\varepsilon}^s(\m_+)$--module $\sigma_{\varepsilon}^{-1}({\rm hom}_{\mathbb{C}}(U_{\varepsilon}^s(\m_+),\mathbb{C}))\otimes W_\varepsilon^s(G)\simeq {\rm hom}_{\mathbb{C}}(U_{\varepsilon}^s(\m_+), W_\varepsilon^s(G))$ is injective. In particular, ${\rm Ext}^{1}_{U_{\varepsilon}^s(\m_+)}(\mathbb{C}_{\varepsilon},\sigma_{\varepsilon}^{-1}({\rm hom}_{\mathbb{C}}(U_{\varepsilon}^s(\m_+),\mathbb{C}))\otimes W_\varepsilon^s(G)
)=0$. One checks straightforwardly, similarly the case of the map $\sigma_\varepsilon$, that the other conditions of Lemma \ref{surj} for the map $\phi_\varepsilon$ are satisfied as well. Therefore $\phi_\varepsilon$ is surjective.

This completes the proof of the proposition.
\end{proof}

Now we formulate our main statement.

\begin{theorem}\label{sqeq}
Assume that the roots $\gamma_1,\ldots, \gamma_n$ (or $\gamma_{n+1},\ldots, \gamma_{l'}$) are simple or one of the sets $\gamma_1,\ldots, \gamma_n$ or $\gamma_{n+1},\ldots, \gamma_{l'}$ is empty. Suppose also that the numbers $t_{i}$ defined in (\ref{defu}) are not equal to zero for all $i$. Then for generic $\varepsilon\in \mathbb{C}$ the functor $E\mapsto Q_\varepsilon\otimes_{W_\varepsilon^s(G)}E$, is an equivalence of the category of finitely generated left $W_\varepsilon^s(G)$--modules and the category ${{\mathbb{C}}_{\varepsilon}[G_*]-{\rm mod}_{U_{\varepsilon}^s(\m_+)}^{\chi^{s}_\varepsilon}}_{loc}$. The inverse equivalence is given by the functor $V\mapsto {\rm Wh}(V)$. In particular, the latter functor is exact.

Every module $V\in {{\mathbb{C}}_{\varepsilon}[G_*]-{\rm mod}_{U_{\varepsilon}^s(\m_+)}^{\chi^{s}_\varepsilon}}_{loc}$ is isomorphic to ${\rm hom}_{\mathbb{C}}(U_{\varepsilon}^s(\m_+),\mathbb{C}))\otimes {\rm Wh}(V)$ as a right $U_{\varepsilon}^s(\m_+)$--module. In particular, $V$ is $U_{\varepsilon}^s(\m_+)$--injective, and ${\rm Ext}^\bullet_{U_{\varepsilon}^s(\m_+)}(\mathbb{C}_{\varepsilon},V)={\rm Wh}(V)$.
\end{theorem}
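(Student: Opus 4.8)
The strategy is to check that the two functors $E\mapsto Q_\varepsilon\otimes_{W^s_\varepsilon(G)}E$ and $V\mapsto {\rm Wh}(V)={\rm Hom}_{U^s_\varepsilon(\m_+)}(\mathbb{C}_\varepsilon,V)$ are mutually quasi-inverse, using the structural result just established: for generic $\varepsilon$, $Q_\varepsilon\cong {\rm hom}_{\mathbb{C}}(U^s_\varepsilon(\m_+),\mathbb{C})\otimes W^s_\varepsilon(G)$ as a $U^s_\varepsilon(\m_+)$--$W^s_\varepsilon(G)$--bimodule. First I would record the easy direction: Proposition \ref{Qprop1} already tells us that for a finitely generated $W^s_\varepsilon(G)$--module $E$ the space $Q_\varepsilon\otimes_{W^s_\varepsilon(G)}E$ lies in ${{\mathbb{C}}_{\varepsilon}[G_*]-{\rm mod}_{U_{\varepsilon}^s(\m_+)}^{\chi^{s}_\varepsilon}}_{loc}$ and that ${\rm Wh}(Q_\varepsilon\otimes_{W^s_\varepsilon(G)}E)=E$ functorially, so ${\rm Wh}\circ(Q_\varepsilon\otimes_{W^s_\varepsilon(G)}-)\cong {\rm id}$.

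The content is in the other composite. Given $V\in{{\mathbb{C}}_{\varepsilon}[G_*]-{\rm mod}_{U_{\varepsilon}^s(\m_+)}^{\chi^{s}_\varepsilon}}_{loc}$, there is the canonical evaluation morphism $Q_\varepsilon\otimes_{W^s_\varepsilon(G)}{\rm Wh}(V)\rightarrow V$, which is a morphism in the category by the remark preceding this theorem; I must show it is an isomorphism. Using the bimodule isomorphism $Q_\varepsilon\cong {\rm hom}_{\mathbb{C}}(U^s_\varepsilon(\m_+),\mathbb{C})\otimes W^s_\varepsilon(G)$ we get $Q_\varepsilon\otimes_{W^s_\varepsilon(G)}{\rm Wh}(V)\cong {\rm hom}_{\mathbb{C}}(U^s_\varepsilon(\m_+),\mathbb{C})\otimes {\rm Wh}(V)$ as right $U^s_\varepsilon(\m_+)$--modules, so the key claim reduces to: every object $V$ of the category is isomorphic, as a right $U^s_\varepsilon(\m_+)$--module, to ${\rm hom}_{\mathbb{C}}(U^s_\varepsilon(\m_+),\mathbb{C})\otimes {\rm Wh}(V)$ via this map. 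This is the analogue, for the smash-product module $V$, of the identity $Q_\varepsilon\cong{\rm hom}_{\mathbb{C}}(U^s_\varepsilon(\m_+),\mathbb{C})\otimes W^s_\varepsilon(G)$, and I would prove it by the same two-step technique already developed: (i) build the map $\sigma_V: V\rightarrow {\rm hom}_{\mathbb{C}}(U^s_\varepsilon(\m_+),{\rm Wh}(V))$ by $\sigma_V(v)(x)=\rho({\rm Ad}x(v))$ for a splitting $\rho:V\to {\rm Wh}(V)$ of the inclusion, and show injectivity via Lemma \ref{inj} (the augmentation ideal acts locally nilpotently on $V$ by condition 2, and $\sigma_V$ is the identity on ${\rm Wh}(V)$); (ii) show surjectivity via Lemma \ref{surj}, for which I need ${\rm Ext}^1_{U^s_\varepsilon(\m_+)}(\mathbb{C}_\varepsilon,V)=0$. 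The vanishing of this ${\rm Ext}^1$, and indeed of all higher ${\rm Ext}$, is exactly the last displayed assertion of the theorem: since $V$ is free over ${\mathbb{C}}_\varepsilon[G_*]$ after lifting to $\mathcal{A}'$, it is a direct summand of a free ${\mathbb{C}}_\varepsilon[G_*]$--module, hence a summand of copies of $Q_\varepsilon$ twisted appropriately — more precisely $V$ is a specialization of an object of ${{\mathbb{C}}_{\mathcal{A}'}[G_*]-{\rm mod}_{U_{\mathcal{A}'}^s(\m_+)}^{\chi^{s}_q}}_{loc}$ and one runs the Grothendieck upper-semicontinuity argument from the previous proposition verbatim, the $q=1$ specialization computing de Rham cohomology of $M_+$ which vanishes in positive degree.

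I expect the main obstacle to be the second half of step (ii): verifying that $\sigma_V$ (respectively the composite evaluation map) fits the hypotheses of Lemma \ref{surj}, in particular identifying ${\rm hom}_{\mathbb{C}}(U^s_\varepsilon(\m_+),{\rm Wh}(V))$ as the target and checking the cokernel carries a locally nilpotent $J$--action and the Whittaker-vector spaces match — here one uses that $V$ is free over ${\mathbb{C}}_\varepsilon[G_*]$ to reduce, via exactness of $\otimes$ over the free module and the $Q_\varepsilon$ case, to the already-proved structural result. Once $Q_\varepsilon\otimes_{W^s_\varepsilon(G)}{\rm Wh}(V)\xrightarrow{\sim}V$ is established for all $V$, the natural transformations are inverse equivalences of abelian categories, and exactness of $V\mapsto {\rm Wh}(V)$ follows formally since it is quasi-inverse to an exact functor (tensoring over $W^s_\varepsilon(G)$ with the bimodule $Q_\varepsilon$, which is free on the left by the structural isomorphism). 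The final assertions $V\cong {\rm hom}_{\mathbb{C}}(U^s_\varepsilon(\m_+),\mathbb{C})\otimes {\rm Wh}(V)$, $U^s_\varepsilon(\m_+)$--injectivity of $V$, and ${\rm Ext}^\bullet_{U^s_\varepsilon(\m_+)}(\mathbb{C}_\varepsilon,V)={\rm Wh}(V)$ are then immediate from step (i)--(ii) together with Lemma \ref{inj1} specialized at $q=\varepsilon$.
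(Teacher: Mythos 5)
Your overall architecture is the paper's: the easy direction is Proposition \ref{Qprop1}, and everything reduces to showing that the canonical map $f:Q_{\varepsilon}\otimes_{W_\varepsilon^s(G)}{\rm Wh}(V)\rightarrow V$ is an isomorphism, using the bimodule isomorphism $Q_{\varepsilon}\cong{\rm hom}_{\mathbb{C}}(U_{\varepsilon}^s(\m_+),\mathbb{C})\otimes W_\varepsilon^s(G)$ together with Lemmas \ref{inj}, \ref{inj1} and \ref{surj}. But your step (ii) goes wrong at the decisive point. You apply Lemma \ref{surj} to a map whose \emph{source} is $V$ (the map $\sigma_V$), and therefore you need ${\rm Ext}^{1}_{U_{\varepsilon}^s(\m_+)}(\mathbb{C}_{\varepsilon},V)=0$ for an arbitrary object $V$ of the category. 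Neither of your two proposed justifications works. Condition 1 in the definition of the category says $V$ is free as an $\mathcal{A}'$--module, not as a ${\mathbb{C}}_{\varepsilon}[G_*]$--module; a finitely generated ${\mathbb{C}}_{\varepsilon}[G_*]$--module is not a summand of a free one, and even a free ${\mathbb{C}}_{\varepsilon}[G_*]$--module is a sum of copies of ${\mathbb{C}}_{\varepsilon}[G_*]$, not of $Q_\varepsilon$, so no reduction to the $Q_\varepsilon$ case is available this way. Rerunning the Grothendieck semicontinuity argument ``verbatim'' for $V$ is also not possible as stated: that argument used the specific structure of $Q_{\mathcal{A}'}$ (the embedding $\sigma$ into ${\rm hom}_{\mathcal{A}'}(U_{\mathcal{A}'}^s(\m_+),W_q^s(G))$, the resulting filtration by finite--rank free $\mathcal{A}'$--modules, and the identification of the $q=1$ fibre with ${\mathbb{C}}[M_+]\otimes W^s(G)$ via Theorem \ref{var}), none of which is available for a general object $V$ before the theorem is proved. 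Indeed, ${\rm Ext}^{\bullet}_{U_{\varepsilon}^s(\m_+)}(\mathbb{C}_{\varepsilon},V)={\rm Wh}(V)$ is a \emph{conclusion} of the theorem, not an admissible input.

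The correct move, and the one the paper makes, is to apply Lemma \ref{surj} directly to $f$ itself. Its source $Q_{\varepsilon}\otimes_{W_\varepsilon^s(G)}{\rm Wh}(V)\cong{\rm hom}_{\mathbb{C}}(U_{\varepsilon}^s(\m_+),\mathbb{C})\otimes{\rm Wh}(V)$ is an \emph{injective} right $U_{\varepsilon}^s(\m_+)$--module by (the $\varepsilon$--specialization of) Lemma \ref{inj1}, so the required ${\rm Ext}^1$--vanishing holds for the source for free; the local nilpotency hypothesis of Lemma \ref{surj} is needed only on the cokernel of $f$, which is a quotient of $V$ and hence inherits it from condition 2 of the category; and $f$ induces an isomorphism on Whittaker vectors since ${\rm Wh}$ of the source is $1\otimes{\rm Wh}(V)$. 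Injectivity of $f$ is then Lemma \ref{inj}, exactly as you say. With this correction no new deformation argument is needed, and the remaining assertions (the right $U_{\varepsilon}^s(\m_+)$--module structure of $V$, its injectivity, the computation of ${\rm Ext}^{\bullet}$, and the exactness of ${\rm Wh}$) follow as you indicate.
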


\begin{proof}

Let $E$ be a finitely generated $W_\varepsilon^s(G)$--module. First we observe that by the definition of the algebra $W_\varepsilon^s$ we have
${\rm Wh}(Q_{\mathcal{A}'}\otimes_{W_\varepsilon^s(G)}E)=E$. Therefore to prove the theorem it suffices to check that for any $V\in {{\mathbb{C}}_{\varepsilon}[G_*]-{\rm mod}_{U_{\varepsilon}^s(\m_+)}^{\chi^{s}_\varepsilon}}_{loc}$ the canonical map $f:Q_{\varepsilon}\otimes_{W_\varepsilon^s(G)}{\rm Wh}(V)\rightarrow V$ is an isomorphism.

By the previous Proposition $Q_{\varepsilon}={\rm hom}_{\mathbb{C}}(U_{\varepsilon}^s(\m_+),\mathbb{C})\otimes W_\varepsilon^s(G)$ as a $U_{\varepsilon}^s(\m_+)$--$W_\varepsilon^s(G)$--bimodule. Therefore
\begin{equation}\label{eql}
Q_{\varepsilon}\otimes_{W_\varepsilon^s(G)}{\rm Wh}(V)={\rm hom}_{\mathbb{C}}(U_{\varepsilon}^s(\m_+),\mathbb{C})\otimes {\rm Wh}(V)
\end{equation}
as a right $U_{\varepsilon}^s(\m_+)$--module.

Now the fact that $f$ is an isomorphism can be established by repeating verbatim the arguments used in the proof of a similar statement for the map $\phi_\varepsilon$ in the previous Proposition. In particular $f$ is injective by Lemma \ref{inj}, $Q_{\varepsilon}\otimes_{W_\varepsilon^s(G)}{\rm Wh}(V)={\rm hom}_{\mathbb{C}}(U_{\varepsilon}^s(\m_+),\mathbb{C})\otimes {\rm Wh}(V)$ is an injective right $U_{\varepsilon}^s(\m_+)$--module by Lemma \ref{inj1}, and $f$ is surjective by Lemma \ref{surj}.

This completes the proof of the theorem.
\end{proof}


\section{Localization of quantum biequivariant $\mathcal{D}$--modules}\label{Dmod}

\setcounter{equation}{0}
\setcounter{theorem}{0}

In this section we present a biequivariant version of the localization theorem for quantum $\mathcal{D}$--modules proved in \cite{Kr,Tan}. A similar result for Beilinson--Bernstein localization of $\mathcal{D}$ modules on the flag variety  was already mentioned in the original paper \cite{BBer} (see also \cite{Kash} for more details).

Let $\varepsilon\in \mathbb{C}$ be transcendental and generic. Denote by $\mathbb{C}_\varepsilon[G]$ the Hopf algebra generated by matrix coefficients of finite--dimensional representations of $U_{\varepsilon}^s(\g)$. There is a natural paring $(\cdot,\cdot):U_{\varepsilon}^s(\g)\otimes \mathbb{C}_\varepsilon[G]\rightarrow \mathbb{C}$.
The algebra $\mathbb{C}_\varepsilon[G]$ is equipped with a $U_{\varepsilon}^s(\g)$--bimodule structure via the left and the right regular action,
\begin{equation}\label{}
u(a)=a_1(u,a_2),~(a)u=(u,a_1)a_2,~u\in U_{\varepsilon}^s(\g),~a\in \mathbb{C}_\varepsilon[G],~\Delta a=a_1\otimes a_2.
\end{equation}

Let $\mathcal{D}_\varepsilon$ be the Heisenberg double of $U_{\varepsilon}^s(\g)$ defined in \cite{dual}. As a vector space $\mathcal{D}_\varepsilon=\mathbb{C}_\varepsilon[G]\otimes U_{\varepsilon}^s(\g)$, and the multiplication on $\mathcal{D}_\varepsilon$ is given by
\begin{equation}\label{}
a\otimes u \cdot b\otimes v =au_1(b)\otimes u_2v,~a,b\in \mathbb{C}_\varepsilon[G],~u,v\in U_{\varepsilon}^s(\g),~\Delta_su=u_1\otimes u_2.
\end{equation}

The Heisenberg double is an analogue of the algebra of differential operators on the group $G$ in case of Hopf algebras.
$\mathcal{D}_\varepsilon$ also has the structure of a $U_{\varepsilon}^s(\g)$--bimodule,
\begin{eqnarray}\label{}
u_L(a\otimes v)=u_{(1)}(a)\otimes u_{(2)}vS_s(u_{(3)}),~u_R(a\otimes v)= (a)u\otimes v, \\ u\in U_{\varepsilon}^s(\g),~a\in \mathbb{C}_\varepsilon[G], (id\otimes \Delta_s)\Delta_s(u)=u_{(1)}\otimes u_{(2)}\otimes u_{(3)}. \nonumber
\end{eqnarray}

Both the left and the right $U_{\varepsilon}^s(\g)$ actions on $\mathcal{D}_\varepsilon$ are derivations with respect to the multiplicative structure in the sense that
\begin{equation}\label{derLR}
u_L(a\otimes u \cdot b\otimes v)={u_1}_L(a\otimes u)\cdot {u_2}_L(b\otimes v),~u_R(a\otimes u \cdot b\otimes v)={u_1}_R(a\otimes u)\cdot {u_2}_R(b\otimes v).
\end{equation}
These actions are analogues of the actions generated by left and right translations on $G$ on the algebra of differential operators.

Let $\lambda$ be a character of $U_{\varepsilon}^s(\h)$. $\lambda$ naturally extends to a one--dimensional $U_{\varepsilon}^s(\b_+)$--module that we denote by $\mathbb{C}_\lambda$.

Note that there is an algebra embedding $U_{\varepsilon}^s(\g)\subset \mathcal{D}_\varepsilon$, $x\mapsto 1\otimes x$.
The image of this embedding is an analogue of the algebra of right invariant vector fields on $G$. As in case of Lie groups right invariant vector fields generate left translations in the sense that
$$
1\otimes y_1 \cdot a\otimes x\cdot 1\otimes S_sy_2=y_L(a\otimes x),~y\in U_{\varepsilon}^s(\g)\subset \mathcal{D}_\varepsilon,~a\otimes x\in \mathcal{D}_\varepsilon,~\Delta_s(y)=y_1\otimes y_2.
$$

Let $\mathbb{C}_\varepsilon[B_+]'$ be the quotient Hopf algebra of $\mathbb{C}_\varepsilon[G]$ by the Hopf algebra ideal generated by elements vanishing on $U_{\varepsilon}^s(\b_+)$. Note that if $V$ is a right $\mathbb{C}_\varepsilon[B_+]'$--comodule then $V$ is also naturally a left $U_{\varepsilon}^s(\b_+)$--module.

A $(U_{\varepsilon}^s(\b_+), \lambda)$--equivariant $\mathcal{D}_\varepsilon$--module is a triple $(M,\alpha,\beta)$, where $M$ is a complex vector space equipped with a left $\mathcal{D}_\varepsilon$--action $\alpha:\mathcal{D}_\varepsilon \times M\rightarrow M$, a right $\mathbb{C}_\varepsilon[B_+]'$--coaction which gives rise to a left $U_{\varepsilon}^s(\b_+)$--action $\beta: U_{\varepsilon}^s(\b_+)\times M\rightarrow M$ such that
\begin{enumerate}

\item
The $U_{\varepsilon}^s(\b_+)$--actions on $M\otimes \mathbb{C}_\lambda$ given by $\beta\otimes \lambda$ and by $\alpha|_{U_{\varepsilon}^s(\b_+)}\otimes {\rm Id}$ coincide;

\item
$
\beta(u)(\alpha(a\otimes v)m)=\alpha({u_1}_L(a\otimes v))\beta(u_2)m,{\rm for ~all}~u\in U_{\varepsilon}^s(\b_+),~a\otimes v\in \mathcal{D}_\varepsilon,~m\in M,
$
\noindent
$
\Delta_su=u_1\otimes u_2.
$
\end{enumerate}

$(U_{\varepsilon}^s(\b_+), \lambda)$--equivariant $\mathcal{D}_\varepsilon$--modules form a category $\mathcal{D}^\lambda_{U_{\varepsilon}^s(\b_+)}$ morphisms in which are linear maps of vector spaces respecting all the above introduced structures on $(U_{\varepsilon}^s(\b_+), \lambda)$--equivariant $\mathcal{D}_\varepsilon$--modules.

Let $\mathcal{D}^\lambda_\varepsilon$ be the maximal quotient of $\mathcal{D}_\varepsilon$ which is an object of $\mathcal{D}^\lambda_{U_{\varepsilon}^s(\b_+)}$. In fact one has
$$
\mathcal{D}^\lambda_\varepsilon\simeq \mathcal{D}_\varepsilon/\mathcal{D}_\varepsilon I,
$$
where $I$ is the left ideal in $\mathcal{D}_\varepsilon$ generated by the elements $1\otimes e_i$, $1\otimes t_i-\lambda(t_i)$, $i=1,\ldots ,l$. We denote by $\bf{1}$ the image of $1\otimes 1\in \mathcal{D}_\varepsilon$ in $\mathcal{D}^\lambda_\varepsilon$.

Now define the global section functor $\Gamma:\mathcal{D}^\lambda_{U_{\varepsilon}^s(\b_+)}\rightarrow {\rm Vect}_{\mathbb{C}}$, where ${\rm Vect}_{\mathbb{C}}$ is the category of vector spaces,
$$
\Gamma(M)={\rm Hom}_{\mathcal{D}^\lambda_{U_{\varepsilon}^s(\b_+)}}(\mathcal{D}^\lambda_\varepsilon,M)={\rm Hom}_{U_{\varepsilon}^s(\b_+)}(\mathbb{C}_\varepsilon,M),
$$
where in the last formula $U_{\varepsilon}^s(\b_+)$ acts on $M$ according to $\beta$--action, and $\mathbb{C}_\varepsilon$ is the trivial representation of $U_{\varepsilon}^s(\b_+)$ given by the counit.

One can also write
\begin{equation}\label{GM}
\Gamma(M)={\rm Hom}_{U_{\varepsilon}^s(\b_+)}(\mathbb{C}_\lambda,M),
\end{equation}
where $U_{\varepsilon}^s(\b_+)$ acts on $M$ according to the $\alpha$--action composed with the embedding $U_{\varepsilon}^s(\b_+)\rightarrow \mathcal{D}_\varepsilon$, $x\mapsto 1\otimes x$.

Naturally $\Gamma(\mathcal{D}^\lambda_{\varepsilon})={\rm End}_{\mathcal{D}^\lambda_{U_{\varepsilon}^s(\b_+)}}(\mathcal{D}^\lambda_\varepsilon)$ is an algebra with multiplication induced from $\mathcal{D}_\varepsilon$. The algebra $\Gamma(\mathcal{D}^\lambda_{\varepsilon})$ naturally acts from the left on spaces $\Gamma(M)$ for $M\in \mathcal{D}^\lambda_{U_{\varepsilon}^s(\b_+)}$.

Recall that there is a locally finite right adjoint action of ${\rm Ad}:U_{\varepsilon}^s(\g)\times U_{\varepsilon}^s(\g)^{fin}\rightarrow U_{\varepsilon}^s(\g)^{fin}$ given by
$$
{\rm Ad}x(w)=S_s^{-1}(x_2)wx_1,
$$
where $\Delta_s(x)=x_1\otimes x_2$, $x\in U_{\varepsilon}^s(\g)$, $w\in U_{\varepsilon}^s(\g)^{fin}$.
Let $\Delta_{\rm Ad}:U_{\varepsilon}^s(\g)^{fin}\rightarrow \mathbb{C}_\varepsilon[G]\otimes U_{\varepsilon}^s(\g)^{fin}$ be the dual $\mathbb{C}_\varepsilon[G]$--coaction on $U_{\varepsilon}^s(\g)^{fin}$. One can consider the tensor product $\mathbb{C}_\varepsilon[G]\otimes U_{\varepsilon}^s(\g)^{fin}$ as a linear subspace of $\mathcal{D}_\varepsilon$. Using this fact $\Delta_{\rm Ad}$ can be regarded as a linear map to $\mathcal{D}_\varepsilon$, $\Delta_{\rm Ad}:U_{\varepsilon}^s(\g)^{fin}\rightarrow \mathcal{D}_\varepsilon$. In fact $\Delta_{\rm Ad}$ is an embedding, the left inverse map $\Delta_{{\rm Ad}S_s}$ is given by
\begin{equation}\label{invD}
\Delta_{{\rm Ad}S_s}(a\otimes x)=a\otimes 1\cdot \Delta_{{\rm Ad}S_s}(x),~a\otimes x\in {\rm Im}\Delta_{\rm Ad}\subset \mathbb{C}_\varepsilon[G]\otimes U_{\varepsilon}^s(\g)^{fin},
\end{equation}
where $\Delta_{{\rm Ad}S_s}:U_{\varepsilon}^s(\g)^{fin}\rightarrow \mathbb{C}_\varepsilon[G]\otimes U_{\varepsilon}^s(\g)^{fin}$ is the map dual to the action of $U_{\varepsilon}^s(\g)$ on $U_{\varepsilon}^s(\g)^{fin}$ given by ${\rm Ad}S_s$, and the image of $\Delta_{{\rm Ad}S_s}$ in (\ref{invD}) belongs to the subspace $1\otimes U_{\varepsilon}^s(\g)^{fin}$ which is naturally identified with $U_{\varepsilon}^s(\g)^{fin}$.

Direct calculation also shows that $\Delta_{\rm Ad}:U_{\varepsilon}^s(\g)^{fin}\rightarrow \mathcal{D}_\varepsilon$ is an algebra antihomomorphism,
$$
\Delta_{\rm Ad}(x)\cdot \Delta_{\rm Ad}(y)=\Delta_{\rm Ad}(yx).
$$

Note that $\Delta_{\rm Ad}$ can be extended to a homomorphism from $U_{\varepsilon}^s(\g)$ to a certain completion $\mathbb{C}_\varepsilon[G]\widehat{\otimes} U_{\varepsilon}^s(\g)$ of $\mathbb{C}_\varepsilon[G]\otimes U_{\varepsilon}^s(\g)$ by infinite series terms of which are elements of $\mathbb{C}_\varepsilon[G]\otimes U_{\varepsilon}^s(\g)$. We denote this extension by the same symbol, $\Delta_{\rm Ad}:U_{\varepsilon}^s(\g)\rightarrow \mathbb{C}_\varepsilon[G]\widehat{\otimes} U_{\varepsilon}^s(\g)$. One can equip the completion $\mathbb{C}_\varepsilon[G]\widehat{\otimes} U_{\varepsilon}^s(\g)$ with a multiplication induced from $\mathcal{D}_\varepsilon$. We denote the obtained algebra by $\widehat{\mathcal{D}}_\varepsilon$.

One checks that the map $\Delta_{{\rm Ad}S_s}$ naturally extends to a left inverse of $\Delta_{\rm Ad}:U_{\varepsilon}^s(\g)\rightarrow \widehat{\mathcal{D}}_\varepsilon$, and hence $\Delta_{\rm Ad}:U_{\varepsilon}^s(\g)\rightarrow \widehat{\mathcal{D}}_\varepsilon$ is an embedding.
The image of this map can be regarded as an analogue of the algebra of left invariant vector fields on $G$. In particular, these analogues generate the right action of $U_{\varepsilon}^s(\g)$ on ${\mathcal{D}}_\varepsilon$,
\begin{equation}\label{Dequiv}
\Delta_{\rm Ad}(y_1)\cdot a\otimes x\cdot \Delta_{\rm Ad}(S_s^{-1}y_2)=y_R(a\otimes x),~y\in U_{\varepsilon}^s(\g),~\Delta_s(y)=y_1\otimes y_2,~a\otimes x\in {\mathcal{D}}_\varepsilon.
\end{equation}

The map $\Delta_{\rm Ad}$ is also equivariant with respect to the right action of $U_{\varepsilon}^s(\g)$ on ${\mathcal{D}}_\varepsilon$ in the sense that
\begin{equation}\label{Requiv}
u_R(\Delta_{\rm Ad}(v))=\Delta_{\rm Ad}({\rm Ad}u(v)),~u\in U_{\varepsilon}^s(\g),~v\in U_{\varepsilon}^s(\g)^{fin}.
\end{equation}

Denote by $J_\lambda$ the annihilator of the Verma module $M_\varepsilon(\lambda)=U_{\varepsilon}^s(\g)\otimes_{U_{\varepsilon}^s(\b_+)}\mathbb{C}_\lambda$ in $U_{\varepsilon}^s(\g)^{fin}$. $J_\lambda$ is generated by the ideal of the center $Z(U_{\varepsilon}^s(\g)^{fin})=Z(U_{\varepsilon}^s(\g))$ corresponding to a character $\chi_{\lambda+\rho}:Z(U_{\varepsilon}^s(\g))\rightarrow \mathbb{C}$, where $\rho=\frac{1}{2}\sum_{\alpha \in \Delta_+}\alpha\in P_+$. Let $U_{\varepsilon}^s(\g)^\lambda$ be the quotient of $U_{\varepsilon}^s(\g)^{fin}$ by $J_\lambda$, $U_{\varepsilon}^s(\g)^\lambda=U_{\varepsilon}^s(\g)^{fin}/J_\lambda$.

We also denote by $I_\lambda$ the annihilator of $M_\varepsilon(\lambda)$ in $U_{\varepsilon}^s(\g)$ and by $U_{\varepsilon}^s(\g)_\lambda$ the quotient $U_{\varepsilon}^s(\g)_\lambda=U_{\varepsilon}^s(\g)/I_\lambda$.

A character $\lambda:U_{\varepsilon}^s(\h)\rightarrow \mathbb{C}$ is called regular dominant if for each $\phi\in P_+$ and all weights $\psi$ of $V_\varepsilon(\phi)$, $\phi \neq \psi$, one has $\chi_{\lambda+\phi}\neq \chi_{\lambda+\psi}$.

\begin{proposition}\label{BB}{\bf (\cite{Kr}, Proposition 4.8, Theorem 4.12)}
The map
\begin{equation}\label{}
U_{\varepsilon}^s(\g)^\lambda \rightarrow \Gamma(\mathcal{D}^\lambda_{\varepsilon})^{opp}={\rm Hom}_{U_{\varepsilon}^s(\b_+)}(\mathbb{C}_\varepsilon,\mathcal{D}^\lambda_{\varepsilon}), x\mapsto \Delta_{\rm Ad}(x){\bf 1}
\end{equation}
is an algebra isomorphism.

If $\lambda$ is regular dominant the global section functor $\Gamma:\mathcal{D}^\lambda_{U_{\varepsilon}^s(\b_+)}\rightarrow {\rm mod}-U_{\varepsilon}^s(\g)^\lambda$ is an equivalence of the category $\mathcal{D}^\lambda_{U_{\varepsilon}^s(\b_+)}$ and of the category ${\rm mod}-U_{\varepsilon}^s(\g)^\lambda$ of right $U_{\varepsilon}^s(\g)^\lambda$--modules. The inverse functor is given by
\begin{equation}\label{}
V\mapsto V\otimes_{U_{\varepsilon}^s(\g)^\lambda}\mathcal{D}^\lambda_{\varepsilon},~V\in {\rm mod}-U_{\varepsilon}^s(\g)^\lambda.
\end{equation}
\end{proposition}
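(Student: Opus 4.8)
The plan is to adapt the proof of the quantum Beilinson--Bernstein localization theorem of \cite{Kr} to the realization $U_{\varepsilon}^s(\g)$ with its twisted Hopf structure $(\Delta_s,S_s)$. The structural inputs that argument uses are all available here: the Hopf algebra axioms for $(\Delta_s,S_s)$, the nondegenerate pairing $U_{\varepsilon}^s(\g)\otimes\mathbb{C}_\varepsilon[G]\to\mathbb{C}$ and the Heisenberg double $\mathcal{D}_\varepsilon$ built from it, the two commuting $U_{\varepsilon}^s(\g)$--actions $u_L,u_R$ on $\mathcal{D}_\varepsilon$ and their derivation property (\ref{derLR}), the embedding $\Delta_{\rm Ad}$ of ``left invariant vector fields'' intertwining $u_R$ with ${\rm Ad}$ as in (\ref{Dequiv}) and (\ref{Requiv}), a Peter--Weyl decomposition $\mathbb{C}_\varepsilon[G]\simeq\bigoplus_{\phi\in P_+}V_\varepsilon(\phi)^*\otimes V_\varepsilon(\phi)$ (which holds since $\varepsilon$ is transcendental), and the identification $U_{\varepsilon}^s(\g)^{fin}\simeq\mathbb{C}_\varepsilon[G_*]$ of Proposition \ref{locfin}. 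So the steps below are essentially a translation; the real work lies in re-deriving the $\Delta_s$-- and $S_s$--dependent formulas at each stage.

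For the algebra isomorphism: since $\Delta_{\rm Ad}(x)$ implements $u_R$ by (\ref{Dequiv}) and $u_R$ commutes with $u_L$, while the $\beta$--action on a $(U_{\varepsilon}^s(\b_+),\lambda)$--equivariant module is governed by $u_L$ through condition~2 of its definition, the element $\Delta_{\rm Ad}(x)\mathbf{1}$ is $\beta$--invariant, hence lies in $\Gamma(\mathcal{D}^\lambda_\varepsilon)$, for every $x\in U_{\varepsilon}^s(\g)^{fin}$; the anti-multiplicativity of $\Delta_{\rm Ad}$ turns $x\mapsto\Delta_{\rm Ad}(x)\mathbf{1}$ into a homomorphism into $\Gamma(\mathcal{D}^\lambda_\varepsilon)^{opp}$. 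That it factors through $J_\lambda$ and is injective on the quotient $U_{\varepsilon}^s(\g)^\lambda$ I would establish by the quantum analogue of the classical computation that the kernel of $U(\g)\to H^0(G/B,\mathcal{D}^\lambda_{G/B})$ is the annihilator of the Verma module: reading off $\mathcal{D}^\lambda_\varepsilon=\mathcal{D}_\varepsilon/\mathcal{D}_\varepsilon I$, the $1\otimes U_{\varepsilon}^s(\g)$--submodule of $\mathcal{D}^\lambda_\varepsilon$ generated by $\mathbf{1}$ is canonically isomorphic to $M_\varepsilon(\lambda)=U_{\varepsilon}^s(\g)\otimes_{U_{\varepsilon}^s(\b_+)}\mathbb{C}_\lambda$, and combining this with (\ref{Requiv}) forces the kernel to be $J_\lambda=\{x\in U_{\varepsilon}^s(\g)^{fin}: xM_\varepsilon(\lambda)=0\}$. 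Surjectivity is the quantum counterpart of $H^0(G/B,\mathcal{D}^\lambda_{G/B})=U(\g)^\lambda$: decompose $\mathcal{D}_\varepsilon$, hence $\mathcal{D}^\lambda_\varepsilon$, into weight components for the $\beta$--action via Peter--Weyl, use $\lambda$ to fix the $t_i$--eigenvalues, and use the triangularity of $U_{\varepsilon}^s(\b_+)$ together with an Engel-type argument for $U_{\varepsilon}^s(\n_+)$ to show that the $U_{\varepsilon}^s(\b_+)$--invariants of $\mathcal{D}^\lambda_\varepsilon$ are exactly the image of $\Delta_{\rm Ad}$.

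For the equivalence when $\lambda$ is regular dominant, write $\Gamma(M)={\rm Hom}_{U_{\varepsilon}^s(\b_+)}(\mathbb{C}_\lambda,M)$ as in (\ref{GM}) and note that the localization functor $V\mapsto V\otimes_{U_{\varepsilon}^s(\g)^\lambda}\mathcal{D}^\lambda_\varepsilon$ is left adjoint to $\Gamma$. I would prove (i) that $\Gamma$ is exact and (ii) that the counit $\Gamma(M)\otimes_{U_{\varepsilon}^s(\g)^\lambda}\mathcal{D}^\lambda_\varepsilon\to M$ is surjective for every $M$. For (i), exactness amounts to ${\rm Ext}^{>0}_{U_{\varepsilon}^s(\b_+)}(\mathbb{C}_\lambda,M)=0$; since $\mathcal{D}^\lambda_\varepsilon$ generates the category and the cohomology can be computed along the weight decomposition, this reduces to the assertion that $\chi_{\lambda+\phi}\neq\chi_{\lambda+\psi}$ for $\phi\in P_+$ and any weight $\psi\neq\phi$ of $V_\varepsilon(\phi)$, which is exactly regular dominance --- the ``ampleness'' input. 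For (ii), surjectivity of the counit is the statement that every nonzero object of $\mathcal{D}^\lambda_{U_{\varepsilon}^s(\b_+)}$ is generated by its global sections, which I would deduce from local finiteness of the underlying $\mathbb{C}_\varepsilon[B_+]'$--coaction (so that every nonzero object carries a nonzero $\beta$--highest weight vector of weight $\lambda$) via another Engel-type argument. Combined with the identification $\Gamma(V\otimes_{U_{\varepsilon}^s(\g)^\lambda}\mathcal{D}^\lambda_\varepsilon)=V$ from the first part, a standard diagram chase shows the unit and counit of the adjunction are isomorphisms, so $\Gamma$ and the localization functor are mutually inverse equivalences and, in particular, the latter is exact.

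The step I expect to be the main obstacle is this ``ampleness'' input --- the surjectivity in the first part and the ${\rm Ext}$--vanishing in the second --- since this is where regular dominance of $\lambda$ and transcendentality of $\varepsilon$ are used essentially, and it forces one to re-establish both the Peter--Weyl weight decomposition of $\mathcal{D}^\lambda_\varepsilon$ and the precise behaviour of $\Delta_{\rm Ad}$ and $\Delta_{{\rm Ad}S_s}$ (cf.\ (\ref{invD})) with respect to the twisted coproduct $\Delta_s$ and antipode $S_s$, rather than quoting the standard quantum-group formulas. The categorical part, by contrast, is routine.
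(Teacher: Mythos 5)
Your outline is consistent with what the paper does: Proposition \ref{BB} is not proved in the paper at all but is quoted from \cite{Kr} (Proposition 4.8 and Theorem 4.12), the only implicit new content being that the Backelin--Kremnizer argument survives the passage to the twisted Hopf structure $(\Delta_s,S_s)$ on $U_{\varepsilon}^s(\g)$. Your reconstruction follows exactly the strategy of that cited proof (the identification of $\Gamma(\mathcal{D}^\lambda_\varepsilon)$ via $\Delta_{\rm Ad}$ and the Verma module, Peter--Weyl for surjectivity, exactness of $\Gamma$ plus generation by global sections from regular dominance), and correctly flags the places where the twist and the hypotheses on $\lambda$ and $\varepsilon$ enter, so there is nothing to object to beyond the fact that it is a plan rather than a worked-out argument.
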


Now we present an equivariant version of the previous proposition. Let $U\subset U_{\varepsilon}^s(\g)$ be a subalgebra equipped with a character $\chi:U\rightarrow \mathbb{C}$. Denote by $\mathbb{C}_\chi$ the corresponding one--dimensional representation of $U$. Assume that $U$ is also a coideal, i.e. $\Delta_s(U)\subset U\otimes U_{\varepsilon}^s(\g)$

A biequivariant $\mathcal{D}_\varepsilon$--module is a $(U_{\varepsilon}^s(\b_+), \lambda)$--equivariant $\mathcal{D}_\varepsilon$--module $M$ which is also equipped with the structure of a left $U$-module $\gamma: U\times M\rightarrow M$ such that
\begin{enumerate}

\item
For any $u\in U$ the action of the operator $\chi(u_1)\alpha(\Delta_{\rm Ad}(S_su_2))$ on $M$ is well defined, and
the $U$--actions on $\mathbb{C}_\chi \otimes M$ given by ${\rm Id} \otimes \gamma $ and by $\chi\otimes \alpha\Delta_{\rm Ad}\circ S_s$ coincide;

\item
$
\gamma(u)(\alpha(a\otimes v)m)=\alpha({S_s(u_2)}_R(a\otimes v))\gamma(u_1)m,{\rm for ~all}~u\in U,~a\otimes v\in \mathcal{D}_\varepsilon,~m\in M,
$
\noindent
$
\Delta_su=u_1\otimes u_2.
$
\end{enumerate}

Biequivariant $\mathcal{D}_\varepsilon$--modules form a category $_{~U}^{~~~\chi}\mathcal{D}^\lambda_{U_{\varepsilon}^s(\b_+)}$ morphisms in which are linear maps of vector spaces respecting all the above introduced structures on biequivariant $\mathcal{D}_\varepsilon$--modules.

A $(U,\chi)$--equivariant $U_{\varepsilon}^s(\g)^\lambda$--module is a right $U_{\varepsilon}^s(\g)^\lambda$--module $V$ equipped with the structure of a left $U$-module $\gamma: U\times V\rightarrow V$
such that
\begin{enumerate}

\item
For any $u\in U$ and $v\in V$ one has $\gamma(u)m=\chi(u_1)S_su_2v$, where a priori $\chi(u_1)S_su_2m$ should be understood as the natural action of the image of the element $\chi(u_1)S_su_2\in U_{\varepsilon}^s(\g)$ in $U_{\varepsilon}^s(\g)_\lambda$ on the induced $U_{\varepsilon}^s(\g)_\lambda$--module $V'=V\otimes_{U_{\varepsilon}^s(\g)^\lambda}U_{\varepsilon}^s(\g)_\lambda$;

\item
$
\gamma(u)(xv)={\rm Ad}{S_s(u_2)}(x)(\gamma(u_1)v),{\rm for ~all}~u\in U,~x\in U_{\varepsilon}^s(\g)^\lambda,~v\in V,
$
\noindent
$
\Delta_su=u_1\otimes u_2.
$
\end{enumerate}

$(U,\chi)$--equivariant $U_{\varepsilon}^s(\g)^\lambda$--modules form a category $_{~U}^{~~~\chi}{\rm mod}-U_{\varepsilon}^s(\g)^\lambda$ morphisms in which are linear maps of vector spaces respecting all the above introduced structures on equivariant $U_{\varepsilon}^s(\g)^\lambda$--modules.

Formula (\ref{GM}), condition (2) in the definition of biequivariant $\mathcal{D}_\varepsilon$--modules and the obvious relation
$$
u_R(1\otimes x)=\varepsilon(u)1\otimes x,~u,x\in U_{\varepsilon}^s(\g)
$$
imply that if $M$ is a biequivariant $\mathcal{D}_\varepsilon$--module then $\gamma$ induces a $U$--action on $\Gamma(M)$.
From formula (\ref{Requiv}) it also follows that if $M$ is a biequivariant $\mathcal{D}_\varepsilon$--module then $\Gamma(M)$ is an equivariant $U_{\varepsilon}^s(\g)^\lambda$--module. Conversely, the second  relation in (\ref{derLR}) and (\ref{Dequiv}) imply that if $V$ is an equivariant $U_{\varepsilon}^s(\g)^\lambda$--module with an equivariant structure $\gamma$ then the formula
\begin{equation}\label{gdef}
\gamma(u)(v \otimes (a\otimes x))=\gamma(u_1)(v)\otimes {S_s(u_2)}_R(a\otimes x),~v\in V,~a\otimes x\in \mathcal{D}^\lambda_{\varepsilon},~u\in U
\end{equation}
defines the structure of a biequivariant $\mathcal{D}_\varepsilon$--module on $V\otimes_{U_{\varepsilon}^s(\g)^\lambda}\mathcal{D}^\lambda_{\varepsilon}$.

Thus we have the following proposition.
\begin{proposition}\label{BBeq}
If $\lambda$ is regular dominant the global section functor $\Gamma:\mathcal{D}^\lambda_{U_{\varepsilon}^s(\b_+)}\rightarrow {\rm mod}-U_{\varepsilon}^s(\g)^\lambda$ gives rise to an equivalence of the category $_{~U}^{~~~\chi}\mathcal{D}^\lambda_{U_{\varepsilon}^s(\b_+)}$ of biequivariant $\mathcal{D}_\varepsilon$--modules and of the category $_{~U}^{~~~\chi}{\rm mod}-U_{\varepsilon}^s(\g)^\lambda$ of equivariant right $U_{\varepsilon}^s(\g)^\lambda$--modules. The inverse functor is given by
\begin{equation}\label{}
V\mapsto V\otimes_{U_{\varepsilon}^s(\g)^\lambda}\mathcal{D}^\lambda_{\varepsilon},~V\in _{~U}^{~~~\chi}{\rm mod}-U_{\varepsilon}^s(\g)^\lambda.
\end{equation}
\end{proposition}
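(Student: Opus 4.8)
The plan is to bootstrap from the non--equivariant localization equivalence of Proposition~\ref{BB}. For $\lambda$ regular dominant that proposition supplies mutually inverse equivalences $\Gamma:\mathcal{D}^\lambda_{U_{\varepsilon}^s(\b_+)}\rightarrow {\rm mod}-U_{\varepsilon}^s(\g)^\lambda$ and $V\mapsto V\otimes_{U_{\varepsilon}^s(\g)^\lambda}\mathcal{D}^\lambda_\varepsilon$, realized through the unit $\eta_V:v\mapsto v\otimes{\bf 1}$ and the counit $c_M:\phi\otimes d\mapsto \alpha(d)\phi({\bf 1})$, where $\Gamma(M)$ is identified with a subspace of $M$ via $\phi\mapsto\phi({\bf 1})$ and the $\alpha$--action factors through $\mathcal{D}^\lambda_\varepsilon$. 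The paragraphs preceding the statement already show that both functors lift to the $(U,\chi)$--equivariant categories: $\Gamma(M)$ carries the $U$--action obtained by restricting $\gamma$, which makes it a $(U,\chi)$--equivariant $U_{\varepsilon}^s(\g)^\lambda$--module by (\ref{GM}) and (\ref{Requiv}), and $V\otimes_{U_{\varepsilon}^s(\g)^\lambda}\mathcal{D}^\lambda_\varepsilon$ is made biequivariant by formula (\ref{gdef}). A morphism in either equivariant category is a morphism of the underlying objects that in addition commutes with the $\gamma$--actions, and in both lifts the $\gamma$--action on the image is produced by one fixed functorial recipe (restriction of $\gamma$, resp.\ formula (\ref{gdef}) acting only on the left tensor factor), so the lifted functors automatically send equivariant morphisms to equivariant morphisms. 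Thus the proposition will follow as soon as $\eta_V$ and $c_M$ are shown to be morphisms in the equivariant categories, for they then serve as the unit and counit of an adjoint equivalence between them.

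First I would check equivariance of the counit. For $u\in U$, $\phi\in\Gamma(M)$, $d\in\mathcal{D}^\lambda_\varepsilon$ and $\Delta_s u=u_1\otimes u_2$, formula (\ref{gdef}) and the identification above give $c_M\big(\gamma(u)(\phi\otimes d)\big)=\alpha\big(S_s(u_2)_R(d)\big)\gamma(u_1)\big(\phi({\bf 1})\big)$, and by condition~(2) in the definition of a biequivariant $\mathcal{D}_\varepsilon$--module the right--hand side equals $\gamma(u)\big(\alpha(d)\phi({\bf 1})\big)=\gamma(u)\big(c_M(\phi\otimes d)\big)$. Next I would check equivariance of the unit. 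Since ${\bf 1}$ is the image of $1\otimes 1\in\mathcal{D}_\varepsilon$ and the relation $u_R(1\otimes x)=\varepsilon(u)(1\otimes x)$ descends to $u_R({\bf 1})=\varepsilon(u){\bf 1}$ in $\mathcal{D}^\lambda_\varepsilon$, we get $S_s(u_2)_R({\bf 1})=\varepsilon(S_su_2){\bf 1}=\varepsilon(u_2){\bf 1}$, so (\ref{gdef}) yields $\gamma(u)(v\otimes{\bf 1})=\gamma(u_1)(v)\,\varepsilon(u_2)\otimes{\bf 1}=\gamma(u)(v)\otimes{\bf 1}=\eta_V(\gamma(u)v)$.

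The main obstacle is not ingenuity but bookkeeping with the right action $u_R$ on the quotient $\mathcal{D}^\lambda_\varepsilon=\mathcal{D}_\varepsilon/\mathcal{D}_\varepsilon I$ and its interaction with the left $\alpha$--action and with $\Delta_{\rm Ad}$, $S_s$ and the coproduct through (\ref{derLR}) and (\ref{Dequiv}). In particular one must see that $u_R$ descends to $\mathcal{D}^\lambda_\varepsilon$: this uses (\ref{derLR}) together with the fact that the generators $1\otimes e_i$ and $1\otimes t_i-\lambda(t_i)$ of $I$ are $u_R$--eigenvectors with eigenvalue $\varepsilon(u)$, whence ${u_1}_R(d)\,{u_2}_R(\iota)\in\mathcal{D}_\varepsilon I$ for $\iota$ such a generator. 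One must also confirm that the $U$--action induced by $\gamma$ on $\Gamma(M)$ is genuinely the restriction of $\gamma$ on $M$, for which condition~(2) of biequivariant modules is again the key input. The remaining assertions — that $\eta_V$ and $c_M$ are isomorphisms of $U_{\varepsilon}^s(\g)^\lambda$-- resp.\ $\mathcal{D}_\varepsilon$--modules and that the two functors are quasi--inverse — are inherited verbatim from Proposition~\ref{BB}.
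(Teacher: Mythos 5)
Your proposal is correct and follows essentially the same route as the paper: the paper derives the proposition directly from Proposition \ref{BB} together with the preceding discussion showing that $\Gamma$ and $V\mapsto V\otimes_{U_{\varepsilon}^s(\g)^\lambda}\mathcal{D}^\lambda_{\varepsilon}$ lift to the equivariant categories via (\ref{GM}), (\ref{Requiv}), (\ref{derLR}), (\ref{Dequiv}) and (\ref{gdef}). Your explicit checks that the unit and counit are $U$--equivariant, and that $u_R$ descends to $\mathcal{D}^\lambda_\varepsilon$, are details the paper leaves implicit but are consistent with its argument.
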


Denote by $I_\varepsilon^r$ the right ideal in ${\mathbb{C}}_{\varepsilon}[G^*]$ generated by the kernel of $\chi_\varepsilon^s$ in ${\mathbb{C}}_{\varepsilon}[M_-]$, and by $\rho_{\chi^{s}_\varepsilon}$ the canonical projection ${\mathbb{C}}_{\varepsilon}[G^*]\rightarrow I_\varepsilon^r \backslash {\mathbb{C}}_{\varepsilon}[G^*]$. Let $Q_{\varepsilon}^r$ be the image of ${\mathbb{C}}_{\varepsilon}[G_*]$ under the projection $\rho_{\chi^{s}_\varepsilon}$.

Assume that the roots $\gamma_1, \ldots , \gamma_n$ are simple or that  the set $\gamma_1, \ldots , \gamma_n$ is empty, and hence the segment $\Delta_{s^1}$ is of the form $\Delta_{s^1}=\{\gamma_1,\ldots, \gamma_n\}$.
Then from formula (\ref{comults}) it follows that $\Delta_s(U_{\varepsilon}^s(\m_+))\subset U_{\varepsilon}^s(\m_+)\otimes U_{\varepsilon}^s(\b_+)$.

Similarly to Section \ref{qplproups}  we deduce  that the left action ${\rm Ad}\circ S_s$ of $U_{\varepsilon}^s(\m_+)$ on ${\mathbb{C}}_{\varepsilon}[G_*]$ induces an action on $Q_{\varepsilon}^r$ which we denote by ${\rm Ad}\circ S_s$.
One can also define the corresponding W--algebra by
$$
W_\varepsilon^s(G)^r={\rm Hom}_{U_{\varepsilon}^s(\m_+)}(\mathbb{C}_{\varepsilon},Q_\varepsilon^r),
$$
where the multiplication in $W_\varepsilon^s(G)^r$ is induced from ${\mathbb{C}}_{\varepsilon}[G_*]$.

As in Proposition \ref{ZWimb} we have an embedding
$$
Z({\mathbb{C}}_\varepsilon[G_*])\rightarrow W_\varepsilon^s(G)^r.
$$

Note that by Proposition \ref{locfin} ${\mathbb{C}}_\varepsilon[G_*]\simeq U_{\varepsilon}^s(\g)^{fin}$.
Let $Z_\lambda$ be the kernel of the character $\chi_\lambda: Z({\mathbb{C}}_\varepsilon[G_*])\rightarrow \mathbb{C}$.
Consider the quotient
$$
W_\varepsilon^s(G)^r_\lambda=W_\varepsilon^s(G)^r/W_\varepsilon^s(G)^rZ_\lambda.
$$

Observe that for generic $\varepsilon$ we have an algebra isomorphism  ${\mathbb{C}}_{\varepsilon}[M_-]=U_{\varepsilon}^s(\m_+)$ and that $U_{\varepsilon}^s(\m_+)$ is a coideal in $U_{\varepsilon}^s(\g)$. In particular, $\chi^{s}_\varepsilon$ is a character of $U_{\varepsilon}^s(\m_+)$. Therefore one can define the category
$_{U_{\varepsilon}^s(\m_+)}^{\qquad\chi^{s}_\varepsilon}{\rm mod}-U_{\varepsilon}^s(\g)^\lambda$ of $(U_{\varepsilon}^s(\m_+),\chi^{s}_\varepsilon)$--equivariant $U_{\varepsilon}^s(\g)^\lambda$--modules. Consider the full  subcategory $_{U_{\varepsilon}^s(\m_+)}^{\qquad \chi^{s}_\varepsilon}{\rm mod}-U_{\varepsilon}^s(\g)^\lambda_{loc}$ of $_{U_{\varepsilon}^s(\m_+)}^{\qquad\chi^{s}_\varepsilon}{\rm mod}-U_{\varepsilon}^s(\g)^\lambda$ objects of which are finitely generated over $U_{\varepsilon}^s(\g)$ objects of $_{U_{\varepsilon}^s(\m_+)}^{\qquad\chi^{s}_\varepsilon}{\rm mod}-U_{\varepsilon}^s(\g)^\lambda$ such that the $\gamma$--action of the augmentation ideal of $U_{\varepsilon}^s(\m_+)$ on them is locally nilpotent.

Let ${Q_{\varepsilon}^r}_\lambda$ be the image of $U_{\varepsilon}^s(\g)^{\lambda}$ under the projection $\rho_{\chi^{s}_\varepsilon}$.
We have the following straightforward analogue of Theorem \ref{sqeq} for the category $_{~U_{\varepsilon}^s(\m_+)}^{~~~\chi^{s}_\varepsilon}{\rm mod}-U_{\varepsilon}^s(\g)^\lambda_{loc}$.
\begin{proposition}\label{sqeq1}
Assume that the roots $\gamma_1,\ldots, \gamma_n$ (or $\gamma_{n+1},\ldots, \gamma_{l'}$) are simple or one of the sets $\gamma_1,\ldots, \gamma_n$ or $\gamma_{n+1},\ldots, \gamma_{l'}$ is empty. Suppose also that the numbers $t_{i}$ defined in (\ref{defu}) are not equal to zero for all $i$. Then for generic $\varepsilon\in \mathbb{C}$ the functor $E\mapsto E \otimes_{W_\varepsilon^s(G)^r_\lambda} {Q_{\varepsilon}^r}_\lambda$, is an equivalence of the category of finitely generated right $W_\varepsilon^s(G)^r_\lambda$--modules and the category $_{U_{\varepsilon}^s(\m_+)}^{\qquad\chi^{s}_\varepsilon}{\rm mod}-U_{\varepsilon}^s(\g)^\lambda_{loc}$. The inverse equivalence is given by the functor $V\mapsto {\rm Hom}_{U_{\varepsilon}^s(\m_+)}(\mathbb{C}_\varepsilon,V)={\rm Wh}(V)$. In particular, the latter functor is exact.

Every module $V\in _{U_{\varepsilon}^s(\m_+)}^{\qquad\chi^{s}_\varepsilon}{\rm mod}-U_{\varepsilon}^s(\g)^\lambda_{loc}$ is isomorphic to ${\rm hom}_{\mathbb{C}}(U_{\varepsilon}^s(\m_+),\mathbb{C})\otimes {\rm Wh}(V)$ as a left $U_{\varepsilon}^s(\m_+)$--module. In particular, $V$ is $U_{\varepsilon}^s(\m_+)$--injective, and ${\rm Ext}^\bullet_{U_{\varepsilon}^s(\m_+)}(\mathbb{C}_{\varepsilon},V)={\rm Wh}(V)$.
\end{proposition}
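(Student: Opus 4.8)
The plan is to transcribe, almost word for word, the proof of Theorem~\ref{sqeq} and of the (unlabelled) Proposition preceding it, performing two modifications throughout: replace every left-module structure by the corresponding right-module structure, and impose the central character condition that cuts out $U_{\varepsilon}^s(\g)^\lambda$ from $U_{\varepsilon}^s(\g)^{fin}$. The first thing I would do is fix the dictionary. For generic $\varepsilon$ one has the algebra isomorphisms ${\mathbb{C}}_\varepsilon[G_*]\simeq U_{\varepsilon}^s(\g)^{fin}$ (Proposition~\ref{locfin}) and ${\mathbb{C}}_{\varepsilon}[M_-]\simeq U_{\varepsilon}^s(\m_+)$; under them $\chi^s_\varepsilon$ becomes a character of $U_{\varepsilon}^s(\m_+)$, the intersection of the right ideal $I_\varepsilon^r$ with $U_{\varepsilon}^s(\g)^{fin}$ is the right ideal generated by the elements $\widetilde e_\beta-\chi^s_\varepsilon(\widetilde e_\beta)$, $\beta\in\Delta_{\m_+}$, and the ${\rm Ad}\circ S_s$--action of $U_{\varepsilon}^s(\m_+)$ coincides with the action by the elements $\chi(u_1)S_su_2$ appearing in condition~(1) of the definition of a $(U_{\varepsilon}^s(\m_+),\chi^s_\varepsilon)$--equivariant $U_{\varepsilon}^s(\g)^\lambda$--module. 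The one point to verify with care here is that, under this dictionary, the category $_{~U_{\varepsilon}^s(\m_+)}^{~~~\chi^{s}_\varepsilon}{\rm mod}-U_{\varepsilon}^s(\g)^\lambda_{loc}$ is the reduction modulo $J_\lambda$ of the $q=\varepsilon$ specialization of ${{\mathbb{C}}_{\mathcal{A}'}[G_*]-{\rm mod}_{U_{\mathcal{A}'}^s(\m_+)}^{\chi^{s}_q}}_{loc}$; in particular that condition~(4) of the latter --- that $(S^{-1}\otimes\chi_q^s)\Delta^{opp}(x)$ acts as ${\rm Ad}x$ --- becomes condition~(1) of the former after passing to the induced module $V'=V\otimes_{U_{\varepsilon}^s(\g)^\lambda}U_{\varepsilon}^s(\g)_\lambda$. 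This is an unwinding of $\Delta_s^{opp}(U_{\varepsilon}^s(\m_+))\subset U_{\varepsilon}^s(\b_+)\otimes U_{\varepsilon}^s(\m_+)$ together with formula~(\ref{cm1}), exactly as in the second half of the proof of Proposition~\ref{9.2}.

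Next I would establish the right-handed analogue of the Proposition preceding Theorem~\ref{sqeq}: for generic $\varepsilon$, $Q_\varepsilon^r\simeq {\rm hom}_{\mathbb{C}}(U_{\varepsilon}^s(\m_+),\mathbb{C})\otimes W_\varepsilon^s(G)^r$ as a $W_\varepsilon^s(G)^r$--$U_{\varepsilon}^s(\m_+)$--bimodule. One defines $\sigma_\varepsilon^r:Q_\varepsilon^r\to {\rm hom}_{\mathbb{C}}(U_{\varepsilon}^s(\m_+),W_\varepsilon^s(G)^r)$ by $\sigma_\varepsilon^r(v)(x)=\rho_\varepsilon({\rm Ad}\,S_s(x)(v))$; injectivity follows from Lemma~\ref{inj} once the augmentation ideal is known to act locally nilpotently on $Q_\varepsilon^r$ and $\sigma_\varepsilon^r$ is nonzero on Whittaker vectors; the vanishing ${\rm Ext}^n_{U_{\varepsilon}^s(\m_+)}(\mathbb{C}_\varepsilon,Q_\varepsilon^r)=0$ for $n>0$ follows from the same Grothendieck upper semicontinuity argument over $\mathcal{A}'$, whose $q=1$ specialization computes the de Rham cohomology of $M_+$ with values in $W^s(G)$ and therefore vanishes in positive degrees; and surjectivity follows from Lemma~\ref{surj}. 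Lemma~\ref{inj1} is symmetric in left/right and applies unchanged, as do all local-nilpotence and finiteness hypotheses, since these concern only the $U_{\varepsilon}^s(\m_+)$--module structure and not the side on which $W_\varepsilon^s(G)^r$ acts.

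Then I would pass to the central quotient. Since $Z_\lambda\subset Z({\mathbb{C}}_\varepsilon[G_*])$ is central and ${\rm Ad}$--invariant it commutes both with the ${\rm Ad}\circ S_s$--action and with the $W_\varepsilon^s(G)^r$--action, and since $I_\varepsilon^r$ is a right ideal one has $\rho_{\chi^s_\varepsilon}(U_{\varepsilon}^s(\g)^{fin}Z_\lambda)=Q_\varepsilon^r Z_\lambda$, so ${Q_\varepsilon^r}_\lambda=Q_\varepsilon^r/Q_\varepsilon^r Z_\lambda$. Tensoring the bimodule isomorphism of the previous step over $Z({\mathbb{C}}_\varepsilon[G_*])$ with $Z({\mathbb{C}}_\varepsilon[G_*])/Z_\lambda$ gives ${Q_\varepsilon^r}_\lambda\simeq {\rm hom}_{\mathbb{C}}(U_{\varepsilon}^s(\m_+),\mathbb{C})\otimes W_\varepsilon^s(G)^r_\lambda$ as a $W_\varepsilon^s(G)^r_\lambda$--$U_{\varepsilon}^s(\m_+)$--bimodule, whence ${\rm Wh}({Q_\varepsilon^r}_\lambda)=W_\varepsilon^s(G)^r_\lambda$ and ${\rm Ext}^n_{U_{\varepsilon}^s(\m_+)}(\mathbb{C}_\varepsilon,{Q_\varepsilon^r}_\lambda)=0$ for $n>0$. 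For $E$ finitely generated over $W_\varepsilon^s(G)^r_\lambda$, the module $E\otimes_{W_\varepsilon^s(G)^r_\lambda}{Q_\varepsilon^r}_\lambda$ then lies in $_{~U_{\varepsilon}^s(\m_+)}^{~~~\chi^{s}_\varepsilon}{\rm mod}-U_{\varepsilon}^s(\g)^\lambda_{loc}$ (local nilpotence of the $\gamma$--action is inherited from ${\rm hom}_{\mathbb{C}}(U_{\varepsilon}^s(\m_+),\mathbb{C})$, finite generation over $U_{\varepsilon}^s(\g)$ from that of $E$ over $W_\varepsilon^s(G)^r_\lambda$), and its space of Whittaker vectors is $E$; conversely, for any $V$ in the category the decomposition $V\simeq {\rm hom}_{\mathbb{C}}(U_{\varepsilon}^s(\m_+),\mathbb{C})\otimes {\rm Wh}(V)$ as a left $U_{\varepsilon}^s(\m_+)$--module --- obtained by applying the right-handed forms of Lemmas~\ref{inj}, \ref{inj1}, \ref{surj} to the canonical map, exactly as for $\phi_\varepsilon$ and then $f$ in the proofs of the preceding Proposition and Theorem~\ref{sqeq} --- shows that the counit map ${\rm Wh}(V)\otimes_{W_\varepsilon^s(G)^r_\lambda}{Q_\varepsilon^r}_\lambda\to V$ is an isomorphism. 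This gives the stated equivalence and exactness, together with the final structural assertions about $V$.

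I expect the main obstacle to be the bookkeeping concentrated in the first step: checking that the four defining conditions of $_{~U_{\varepsilon}^s(\m_+)}^{~~~\chi^{s}_\varepsilon}{\rm mod}-U_{\varepsilon}^s(\g)^\lambda_{loc}$ really do correspond, under ${\mathbb{C}}_\varepsilon[G_*]\simeq U_{\varepsilon}^s(\g)^{fin}$ and after reduction by $J_\lambda$, to those defining the $q=\varepsilon$ specialization of ${{\mathbb{C}}_{\mathcal{A}'}[G_*]-{\rm mod}_{U_{\mathcal{A}'}^s(\m_+)}^{\chi^{s}_q}}_{loc}$ --- in particular that the passage to the induced module $V'$ over $U_{\varepsilon}^s(\g)_\lambda$ in condition~(1) is forced and makes the $\gamma$--action coincide with ${\rm Ad}\circ S_s$. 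Once this matching is in place, everything else is the verbatim transcription of Section~\ref{skryabin} with left and right, and $S_s$ and $S_s^{-1}$, interchanged.
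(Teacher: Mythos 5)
Your proposal is correct and follows exactly the route the paper intends: the paper offers no separate proof of Proposition \ref{sqeq1}, declaring it a straightforward analogue of Theorem \ref{sqeq}, and your transcription of Section \ref{skryabin} with left and right interchanged, followed by reduction modulo the central ideal $Z_\lambda$ after the bimodule decomposition of $Q_\varepsilon^r$ has been established, is precisely the elaboration of that analogy. The order of operations you choose (first prove $Q_\varepsilon^r\simeq {\rm hom}_{\mathbb{C}}(U_{\varepsilon}^s(\m_+),\mathbb{C})\otimes W_\varepsilon^s(G)^r$, then quotient the $W$--factor) correctly preserves the injectivity and Ext--vanishing needed for Lemmas \ref{inj}, \ref{inj1} and \ref{surj} to apply to the quotient.
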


Let $\mathbb{C}_\varepsilon[M_+]'$ be the coalgebra which is the quotient of $\mathbb{C}_\varepsilon[G]$ by the coalgebra ideal generated by elements vanishing on $U_{\varepsilon}^s(\m_+)$.
Proposition \ref{sqeq1} implies that the $U_{\varepsilon}^s(\m_+)$--action on the objects of the category $_{U_{\varepsilon}^s(\m_+)}^{\qquad\chi^{s}_\varepsilon}{\rm mod}-U_{\varepsilon}^s(\g)^\lambda_{loc}$ is induced by the adjoint $U_{\varepsilon}^s(\g)$--action on $U_{\varepsilon}^s(\g)^{\lambda}$ which is locally finite. Therefore this action gives rise to a right coaction of $\mathbb{C}_\varepsilon[M_+]'$ on objects of $_{U_{\varepsilon}^s(\m_+)}^{\qquad\chi^{s}_\varepsilon}{\rm mod}-U_{\varepsilon}^s(\g)^\lambda_{loc}$. Conversely, a right $\mathbb{C}_\varepsilon[M_+]'$--coaction on any such object $V$ gives rise to a $U_{\varepsilon}^s(\m_+)$--action such that the action of the augmentation ideal of $U_{\varepsilon}^s(\m_+)$ on it is locally nilpotent. Indeed, the action of the augmentation ideal of $U_{\varepsilon}^s(\b_+)$ on any finite--dimensional $U_{\varepsilon}^s(\g)$--module is locally nilpotent, and hence the action of $U_{\varepsilon}^s(\m_+)\subset U_{\varepsilon}^s(\b_+)$ induced by the coaction of $\mathbb{C}_\varepsilon[M_+]'$ is locally nilpotent as well.

Now observe that in this case the $U_{\varepsilon}^s(\m_+)$--action defined by formula (\ref{gdef}) on the corresponding biequivariant $\mathcal{D}_\varepsilon$--module gives rise to a right $\mathbb{C}_\varepsilon[M_+]'$--coaction which is the tensor product of the right coaction of $\mathbb{C}_\varepsilon[M_+]'$ on $V$ described above and the right coaction of $\mathbb{C}_\varepsilon[M_+]'$ on $\mathcal{D}_\varepsilon^\lambda$ induced by the regular action $(u,a)\mapsto (a)S_s(u)$, $u\in U_{\varepsilon}^s(\g)$, $a\in \mathbb{C}_\varepsilon[G]$, of $U_{\varepsilon}^s(\g)$ on $\mathbb{C}_\varepsilon[G]$ which is locally finite by definition.

Conversely, if $M$ is an object of the category $_{U_{\varepsilon}^s(\m_+)}^{\qquad\chi^{s}_\varepsilon}{\mathcal{D}^\lambda_{U_{\varepsilon}^s(\b_+)}}$ such that the $\gamma$--action of $U_{\varepsilon}^s(\m_+)$ on it is induced by a right $\mathbb{C}_\varepsilon[M_+]'$--coaction then the induced $U_{\varepsilon}^s(\m_+)$--action on $\Gamma(M)$ corresponds to a right $\mathbb{C}_\varepsilon[M_+]'$--coaction on $\Gamma(M)$.

Now consider the full  subcategory $_{U_{\varepsilon}^s(\m_+)}^{\qquad\chi^{s}_\varepsilon}{\mathcal{D}^\lambda_{U_{\varepsilon}^s(\b_+)}}_{loc}$ of $_{U_{\varepsilon}^s(\m_+)}^{\qquad\chi^{s}_\varepsilon}\mathcal{D}^\lambda_{U_{\varepsilon}^s(\b_+)}$ objects of which are finitely generated over $\mathcal{D}_\varepsilon$ objects of $_{U_{\varepsilon}^s(\m_+)}^{\qquad\chi^{s}_\varepsilon}\mathcal{D}^\lambda_{U_{\varepsilon}^s(\b_+)}$ such that for each $M\in {_{U_{\varepsilon}^s(\m_+)}^{\qquad\chi^{s}_\varepsilon}{\mathcal{D}^\lambda_{U_{\varepsilon}^s(\b_+)}}_{loc}}$ the $\gamma$--action of $U_{\varepsilon}^s(\m_+)$ on $M$ is induced by a right $\mathbb{C}_\varepsilon[M_+]'$--coaction.
From Propositions \ref{BBeq} and \ref{sqeq1} and the discussion above we immediately obtain the following statement.
\begin{theorem}
Assume that the roots $\gamma_1,\ldots, \gamma_n$ (or $\gamma_{n+1},\ldots, \gamma_{l'}$) are simple or one of the sets $\gamma_1,\ldots, \gamma_n$ or $\gamma_{n+1},\ldots, \gamma_{l'}$ is empty. Suppose also that the numbers $t_{i}$ defined in (\ref{defu}) are not equal to zero for all $i$. Suppose also that $\lambda$ is regular dominant. Then for generic transcendental $\varepsilon\in \mathbb{C}$ the category $_{U_{\varepsilon}^s(\m_+)}^{\qquad \chi^{s}_\varepsilon}{\mathcal{D}^\lambda_{U_{\varepsilon}^s(\b_+)}}_{loc}$ is equivalent to the category of finitely generated right $W_\varepsilon^s(G)^r_\lambda$--modules.
\end{theorem}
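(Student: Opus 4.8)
The plan is to obtain the equivalence as a composition of two equivalences already established: the biequivariant localization equivalence of Proposition \ref{BBeq} and the equivariant Skryabin equivalence of Proposition \ref{sqeq1}. First I would apply Proposition \ref{BBeq} with the coideal subalgebra $U=U_\varepsilon^s(\m_+)$ and its character $\chi=\chi^s_\varepsilon$. The relevant hypotheses hold: for generic $\varepsilon$ one has ${\mathbb{C}}_\varepsilon[M_-]=U_\varepsilon^s(\m_+)$, the subalgebra $U_\varepsilon^s(\m_+)$ is a coideal in $U_\varepsilon^s(\g)$, and $\chi^s_\varepsilon$ is a character of it; and $\lambda$ is regular dominant by assumption. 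This yields an equivalence $\Gamma: {_{U_\varepsilon^s(\m_+)}^{\qquad\chi^s_\varepsilon}\mathcal{D}^\lambda_{U_\varepsilon^s(\b_+)}} \to {_{U_\varepsilon^s(\m_+)}^{\qquad\chi^s_\varepsilon}{\rm mod}-U_\varepsilon^s(\g)^\lambda}$ with quasi-inverse $V\mapsto V\otimes_{U_\varepsilon^s(\g)^\lambda}\mathcal{D}^\lambda_\varepsilon$.

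The second step is to verify that this equivalence restricts to one between the ``$loc$'' subcategories ${_{U_\varepsilon^s(\m_+)}^{\qquad\chi^s_\varepsilon}{\mathcal{D}^\lambda_{U_\varepsilon^s(\b_+)}}_{loc}}$ and ${_{U_\varepsilon^s(\m_+)}^{\qquad\chi^s_\varepsilon}{\rm mod}-U_\varepsilon^s(\g)^\lambda_{loc}}$. For this I would use the dictionary discussed just before the statement: on the module side the adjoint $U_\varepsilon^s(\g)$--action on $U_\varepsilon^s(\g)^\lambda$ is locally finite, so the $U_\varepsilon^s(\m_+)$--action on an object $V$ is equivalent data to a right $\mathbb{C}_\varepsilon[M_+]'$--coaction, and such a coaction forces the augmentation ideal of $U_\varepsilon^s(\m_+)\subset U_\varepsilon^s(\b_+)$ to act locally nilpotently, because the augmentation ideal of $U_\varepsilon^s(\b_+)$ acts locally nilpotently on every finite--dimensional $U_\varepsilon^s(\g)$--module. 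Conversely, formula (\ref{gdef}) exhibits the $\gamma$--action on $V\otimes_{U_\varepsilon^s(\g)^\lambda}\mathcal{D}^\lambda_\varepsilon$ as coming from the tensor product of the $\mathbb{C}_\varepsilon[M_+]'$--coaction on $V$ with the (locally finite) regular $\mathbb{C}_\varepsilon[M_+]'$--coaction on $\mathcal{D}^\lambda_\varepsilon$, and $\Gamma$ transports the $\mathbb{C}_\varepsilon[M_+]'$--coaction condition back from $M$ to $\Gamma(M)$. It remains to match the finiteness constraints: since $\Gamma(\mathcal{D}^\lambda_\varepsilon)^{opp}\simeq U_\varepsilon^s(\g)^\lambda$ by Proposition \ref{BB} and $\Gamma$ is exact, the functor $\Gamma$ and its quasi-inverse carry objects finitely generated over $\mathcal{D}_\varepsilon$ to objects finitely generated over $U_\varepsilon^s(\g)$ and back. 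Hence $\Gamma$ indeed restricts to an equivalence of the two ``$loc$'' categories.

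Finally I would invoke Proposition \ref{sqeq1}: under the present hypotheses on the roots $\gamma_i$ and the nonvanishing of the $t_i$, the functor $E\mapsto E\otimes_{W_\varepsilon^s(G)^r_\lambda}{Q_\varepsilon^r}_\lambda$ is an equivalence between finitely generated right $W_\varepsilon^s(G)^r_\lambda$--modules and ${_{U_\varepsilon^s(\m_+)}^{\qquad\chi^s_\varepsilon}{\rm mod}-U_\varepsilon^s(\g)^\lambda_{loc}}$, with quasi-inverse $V\mapsto {\rm Wh}(V)$. Composing this with the restricted localization equivalence of the previous paragraph gives the asserted equivalence between ${_{U_\varepsilon^s(\m_+)}^{\qquad\chi^s_\varepsilon}{\mathcal{D}^\lambda_{U_\varepsilon^s(\b_+)}}_{loc}}$ and the category of finitely generated right $W_\varepsilon^s(G)^r_\lambda$--modules; concretely the composite functor is $E\mapsto (E\otimes_{W_\varepsilon^s(G)^r_\lambda}{Q_\varepsilon^r}_\lambda)\otimes_{U_\varepsilon^s(\g)^\lambda}\mathcal{D}^\lambda_\varepsilon$ with quasi-inverse $M\mapsto {\rm Wh}(\Gamma(M))$. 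The main obstacle I anticipate is the bookkeeping in the second step: one must verify cleanly that ``finitely generated over $\mathcal{D}_\varepsilon$, with the $\gamma$--action of $U_\varepsilon^s(\m_+)$ induced by a right $\mathbb{C}_\varepsilon[M_+]'$--coaction'' is exactly the condition transported by $\Gamma$ to ``finitely generated over $U_\varepsilon^s(\g)$, with locally nilpotent augmentation-ideal action'', which requires simultaneously tracking the two equivariance structures, the coaction/action correspondence, and the effect of the adjoint functors on finite generation. Once this is in place the rest is a formal composition of equivalences.
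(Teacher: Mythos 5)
Your proposal is correct and follows essentially the same route as the paper: the theorem is obtained by composing the biequivariant localization equivalence of Proposition \ref{BBeq} (restricted to the ``$loc$'' subcategories via the $\mathbb{C}_\varepsilon[M_+]'$--coaction/locally nilpotent action dictionary established in the discussion preceding the theorem) with the equivariant Skryabin equivalence of Proposition \ref{sqeq1}. The paper in fact states the result as following ``immediately'' from these two propositions and that discussion, so your more careful tracking of the finite-generation and coaction conditions under $\Gamma$ only makes explicit what the paper leaves implicit.
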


\end{document}